\newtheorem{theoremintro}{Theorem}
\newtheorem{theorem}{Theorem}[subsection]
\newtheorem{lemma}[theorem]{Lemma}
\newtheorem{proposition}[theorem]{Proposition}
\newtheorem{corollary}[theorem]{Corollary}
\theoremstyle{definition}
\newtheorem{definition}[theorem]{Definition}
\newtheorem{example}[theorem]{Example}
\newtheorem{remark}[theorem]{Remark}
\newcommand{\C}{\mathbb C}
\newcommand{\PP}{\mathbb P}
\newcommand{\R}{\mathbb R}
\def\CC{\mathbb{C}}
\def\NN{\mathbb{N}}
\def\RR{\mathbb{R}}
\def\ZZ{\mathbb{Z}}
\def\KK{\mathbb{K}}
\def\SS{\mathbb{S}}
\renewcommand{\epsilon}{\varepsilon}
\newcommand{\cI}{\mathcal{I}}
\newcommand{\cE}{\mathcal{E}}
\newcommand{\cV}{\mathcal{V}}
\newcommand{\mcone}{\mathcal{M}}
\renewcommand{\div}{\mathrm{div}}
\newcommand{\rr}{\mathrm{rr}}
\DeclareMathOperator{\double}{\mathfrak{D}}
\DeclareMathOperator{\relint}{\mathrm{ri}}
\DeclareMathOperator{\interior}{\mathrm{int}}
\DeclareMathOperator{\mult}{mult}
\DeclareMathOperator{\pos}{\mathrm{P}}
\DeclareMathOperator{\cone}{cone}
\DeclareMathOperator{\quot}{Quot}
\DeclareMathOperator{\jac}{J}
\DeclareMathOperator{\supp}{supp}
\DeclareMathOperator{\Cl}{Cl}
\DeclareMathOperator{\Div}{Div}
\def \eval {\mathrm{ev}}
\def \car {\mathcal{C}}
\def \face {\mathrm{F}}
\def \faces {\mathcal{F}}
\setlist[enumerate,1]{label={(\roman*)},ref={\thetheorem (\roman*)}}
\DeclarePairedDelimiterX{\BW}[2]{\langle}{\rangle_{BW}}{#1, #2}
\title%
{Nonnegative Polynomials and Moment Problems on Algebraic Curves}
\author{Lorenzo Baldi\textsuperscript{*} \and Grigoriy Blekherman \textsuperscript{$\dagger$} \and Rainer Sinn\textsuperscript{$\ddagger$}}
\address{\textsuperscript{*}Sorbonne Universit\'e, CNRS, F-75005, Paris, France, and MPI MiS, Leipzig, Germany}
\address{\textsuperscript{$\dagger$}Georgia  Institute  of  Technology,  Atlanta,  USA}
\address{\textsuperscript{$\ddagger$}Universität Leipzig, Leipzig, Germany}
\email{lorenzo.baldi@mis.mpg.de}
\date{\today}
\keywords{Nonnegative Polynomials, Moment Problem, Elliptic Curves, Facial Structure, Flat Extension}
\subjclass{14P99, 14P25, 44A60, 14H52}
\begin{document}

\begin{abstract}
The cone of nonnegative polynomials is of fundamental importance in real algebraic geometry, but its facial structure is understood in very few cases. We initiate a systematic study of the facial structure of the cone of nonnegative polynomials $\pos$ on a smooth real projective curve $X$. We show that there is a duality between its faces and totally real effective divisors on $X$. This allows us to fully describe the face lattice in case $X$ has genus one. We compute the Carath\'{e}odory number of the dual moment cone $\pos^\vee$ for an elliptic normal curve $X$, which measures the complexity of quadrature rules of measures supported on $X$. Interestingly, the topology of the real locus of $X$ influences the Carath\'{e}odory number of $\pos^\vee$. We apply our results to truncated moment problems on affine cubic curves, where we deduce sharp bounds on the flat extension degree.
\end{abstract}

\maketitle

\section{Introduction}

Providing certificates of positivity for real nonnegative polynomials is a fundamental challenge in real algebraic geometry, dating back to the works of Hilbert \cite{hilbertUeberDarstellungDefiniter1888} and Artin \cite{artinUberZerlegungDefiniter1927} on the existence of sums of squares representations. Over the last century, these results, now called Positivstellens\"atze, have been extensively studied and generalized, see e.g. \cite{marshallPositivePolynomialsSums2008a}. However, the geometric properties of the set of nonnegative polynomials are less understood. The polynomials of degree $2d$ nonnegative on a variety $X$ form a convex cone, which we denote by $\pos_{X,2d}$; its face structure is fully known only in very few cases \cite{schulzeConesLocallyNonNegative2021, kunertFacialStructureCones2014}: degree $2d$ univariate polynomials (or equivalently, quadratic polynomials on the rational normal curve), quadratic polynomials on $\R^n$ and quartic polynomials on $\RR^2$.
In this work, we initiate a systematic study of the facial structure of the cones of nonnegative polynomials on a real projective curve $X$, and in particular of the extreme rays of these cones. For elliptic normal curves, we provide a full description of the face lattice.

While the cone of nonnegative polynomials is a central object in real algebraic geometry, its dual convex cone $\pos_{X,2d}^\vee$ plays an important role in real analysis. In the analysis literature, $\pos_{X,2d}^\vee$ is called the moment cone \cite{schmudgenMomentProblem2017a}. It corresponds to the convex cone of linear functionals, acting on polynomials of degree $2d$, that can be written as integration with respect to a measure supported on $X$. We leverage our understanding of the cone of nonnegative polynomials on elliptic normal curves to find the Carath\'{e}odory number of the dual moment cone, refining results of \cite{didioMultidimensionalTruncatedMoment2021a}. Interestingly, we find that the Carath\'eodory number depends on the topology of the real locus: smooth totally real, genus one curves can have either one or two connected components, and the Carath\'{e}odory number is different in these two cases. 

Our main results deal with projective curves, whereas truncated moment problems are usually studied in the affine setting. We apply projective study to investigate the truncated moment problem on smooth affine plane cubic curves. Such truncated moment problems have been studied for specific rational curves, e.g.~in \cite{fialkowSolutionTruncatedMoment2011,zalarTruncatedMomentProblem2023}. We provide sharp degree bounds for flat extension, improving the known results and completing the picture for the truncated moment problem on smooth cubic planar curves.
\subsection{Main results and related works}
Let $X\subset \PP^n = \PP^n(\CC)$ be a real projective curve, and denote the real locus of $X$ by $X(\RR) \subset \PP^n(\RR)$.
We work with totally real curves, i.e. curves such that $X(\RR)$ is Zariski dense in $X$. Our main object of study is the cone of nonnegative forms on $X$ of degree $2d$. By replacing $X$ with the $d$-th Veronese embedding $\nu_{n, d}(X)$ we may restrict ourselves to analyzing nonnegative quadratic polynomials on real curves, and thus we define
 \[\pos_{X, 2} \coloneqq \big\{ \, q \mid \deg q = 2 \text{ and for all } A \in X(\RR), \ q(A) \ge 0 \, \big\}\] 
to be the convex cone of quadratic forms nonnegative on $X(\RR)$. $\pos_{X, 2}$ is a closed, pointed convex cone in the real vector space $R_2$ of quadratic forms on $X$.  
In \Cref{sec:psd_forms_curves}, we study the face structure of $\pos_{X,2}$. The cone $\pos_{X,2}$ has one zero-dimensional face: the origin. One-dimensional faces of $\pos_{X,2}$ are its extreme rays. To any positive dimensional face $\face$ we associate a unique totally real effective divisor $\div(\face)$, which we call the \emph{face divisor} of $\face$, as follows (we refer to \Cref{sec:faces_to_divisors} for more details). The face divisor is equal to half the real part of the divisor $\div q$, for any quadric $q$ in the relative interior of $\face$. In the converse direction, to any totally real effective divisor $D$ we associate the face $\face_D$ consisting of all quadratic forms $q \in \pos_{X,2}$ such that $\div q \geq 2D$.The above allows us to define a two maps $\Phi$ and $\Psi$ between faces $\face \subset \pos$ and totally real effective divisors on $X$: $\Phi$ which sends a positive dimensional face of $\pos_{X,2}$ to its face divisor $\div(\face)$, and the map $\Psi$ which sends a totally real effective divisor $D$ to its associated face $\face_D$.
\begin{theoremintro}[{see \Cref{thm:char_faces} and \Cref{cor:galois}}]
    \label{thm:A}
    Let $(\faces\setminus\{\, \{0\}\, \}, \subset)$ be the poset of positive-dimensional faces of $\pos_{X,2}$, ordered by inclusion. Then
    $\Phi$ and $\Psi$ form a Galois connection between $(\faces\setminus\{\, \{0\}\, \}, \, \subset \,)$ and the face divisors $(\Im \Phi, \le)$. This means that, for all positive dimensional faces $\face \in \faces\setminus\{\, \{0\}\, \}$ and face divisors $D \in \Im \Phi$:
    \[
        \div(\face) \le D \iff \face \supseteq \face_{D}
    \]
    Moreover, we have $(\Psi \circ \Phi)(\face) = \face_{\div(\face)} = \face$ for all $\face \in \faces\setminus\{\, \{0\}\, \}$.
\end{theoremintro}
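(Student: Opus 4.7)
The plan is to reduce the theorem to a single local multiplicity lemma, after which the Galois property becomes a formal consequence. First observe that a nonnegative form $q \in \pos$ vanishes to even order at every real point $P \in X(\RR)$, since in a local parameter at $P$ the leading term of $q$ must have positive coefficient. Hence $\tfrac{1}{2}(\div q)_{\RR}$ is a well-defined effective totally real divisor for any nonzero $q \in \pos$, and the set $\face_D = \{\, q \in \pos : \div q \ge 2D \,\}$ is a face of $\pos$: if $q_1 + q_2 \in \face_D$ with $q_i \in \pos$, then at each $P \in \supp(D)$ the local leading terms of $q_1$ and $q_2$ have positive coefficients (no cancellation is possible), forcing $\mult_P(q_i) \ge 2\,\mult_P(D)$.

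The heart of the proof is the following local comparison lemma: for $q_0 \in \relint \face$ and $q \in \face$, the inequality $\mult_P(q) \ge \mult_P(q_0)$ holds for every $P \in X(\RR)$. Because $q_0$ lies in the relative interior of the convex cone $\face$, for sufficiently small $\epsilon > 0$ the combination $q_0 - \epsilon q$ still belongs to $\face \subseteq \pos$. If instead $\mult_P(q) < \mult_P(q_0)$ at some $P$, then in a local parameter $t$ at $P$ the leading term of $q_0 - \epsilon q$ equals $-\epsilon\, c\, t^{\mult_P(q)}$ with $c>0$, which is strictly negative on one side of $P$, contradicting nonnegativity. This lemma shows at once that $\div(\face) := \tfrac{1}{2}(\div q_0)_{\RR}$ does not depend on the choice of $q_0 \in \relint \face$, that $\Phi$ is order-reversing, and that $\face \subseteq \face_{\div(\face)}$.

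For the reverse inclusion $\face_{\div(\face)} \subseteq \face$, take any $q \in \face_{\div(\face)}$, so that $\mult_P(q) \ge \mult_P(q_0)$ at every real point $P$. I claim $q_0 - \epsilon q \in \pos$ for some $\epsilon > 0$. At each real zero $P$ of $q_0$ the local expansion gives
\[
q_0 - \epsilon q = t^{\mult_P(q_0)}\bigl(a_P - \epsilon\, b_P\, t^{\mult_P(q) - \mult_P(q_0)} + O(t)\bigr),
\]
with $a_P > 0$, hence nonnegative for $\epsilon$ and $t$ small. Outside a fixed neighborhood of the finite real zero set, $q_0$ is bounded below by a positive constant on the compact set $X(\RR)$ while $q$ is bounded above, so a single uniform $\epsilon$ works. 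Then $q_0 = (q_0 - \epsilon q) + \epsilon q \in \face$ decomposes into two elements of $\pos$, and the face property forces $\epsilon q \in \face$, hence $q \in \face$. This proves $(\Psi \circ \Phi)(\face) = \face$.

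Given this identity, the Galois connection follows formally. If $\div(\face) \le D$, then $\face_D \subseteq \face_{\div(\face)} = \face$. Conversely, suppose $\face \supseteq \face_D$ with $D \in \Im \Phi$, say $D = \div(\face')$; then $\face_D = \face_{\div(\face')} = \face'$ by the identity just established, so $\face \supseteq \face'$, and the order-reversing property of $\Phi$ yields $\div(\face) \le \div(\face') = D$. The main obstacle is the $\face_{\div(\face)} \subseteq \face$ half of the identity: one must balance the local vanishing estimate at each shared real zero of $q_0$ and $q$ against the strict positivity of $q_0$ elsewhere and glue these into a single global $\epsilon$ by compactness of $X(\RR)$.
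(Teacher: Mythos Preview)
Your proof is correct and follows essentially the same approach as the paper: both arguments establish $\face = \face_{\div(\face)}$ via a local multiplicity comparison at real zeroes combined with a compactness argument on $X(\RR)$ to produce a uniform $\epsilon$. The only cosmetic difference is that the paper perturbs $q_0$ by elements of the linear span $V_{\div(\face)}$ to show $q_0 \in \relint \face_{\div(\face)}$ and then invokes Rockafellar's result that faces with intersecting relative interiors coincide, whereas you perturb in the direction $q_0 - \epsilon q$ and appeal directly to the face-decomposition property; your packaging is arguably a bit more self-contained.
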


The study of the face lattice of $\pos$ thus reduces to understanding the following crucial question: \emph{which totally real effective divisors are face divisors of some face $\face$ of $\pos$?} In \Cref{sec:dimension_normality}, we study the dimension of the faces of $\pos$ using Riemann-Roch. In \Cref{sec:extreme_rays}, we focus in particular on the extreme rays of $\pos_{X, 2d}$, which we denote by $\cE(\pos_{X, 2d})$. We introduce a map $\double \colon \cE(\pos) \to \jac$ from the set of extreme rays to the Jacobian $\jac$ of the curve (see \Cref{sec:double}), and focus in \Cref{sec:real_rooted} on the set $\cE^{\rr}(\pos_{X, 2d})$ of extreme rays spanned by forms with only real zeroes on $X$. We characterize the image of $\cE^{\rr}(\pos_{X, 2d})$ under $\double$ as the set of positive 2-torsion points in $J(\RR)_2^+$ (see \Cref{def:positive_torsion}), which is an elementary abelian group of order $2^g$, and study its fibers.

\begin{theoremintro}[{see \Cref{cor:families_torsion}}]\label{thm:B}
    Let $X\subset\PP^n$ be a totally real smooth irreducible curve of genus $g$. With the previous notations, for all sufficiently large $d$, \[\double(\cE^{\rr}(\pos_{X, 2d})) = \jac(\RR)_2^+ \cong (\ZZ/2\ZZ)^{g}\]
    Moreover, $\double(\RR_{\ge 0} \cdot q_1) = \double(\RR_{\ge 0} \cdot q_2)$ if and only if there exists $g \in \RR(X)$ such that $q_1 = g^2 q_2$.
\end{theoremintro}

\Cref{thm:B} has surprising consequences for the representation of positive polynomials, as it predicts the existence of forms $q \in \pos_{X, 2d}$ such that $q^{2k+1}$ is not a sum of squares for all $k \in \NN$ \cite{baldi2026stubbornpolynomials}. Using \Cref{thm:B}, we investigate in \Cref{sec:genus_one} the case of genus one curves, when we have $\cE(\pos_{X, 2d}) = \cE^\rr(\pos_{X, 2d})$ (see also \cite{kummer2026nonnegativepolynomialsgeneralizedellipticv3}). The special case of plane cubics, which was our original motivation, is described using elementary techniques in \Cref{sec:plane_cubics}. 
More generally, the complete facial structure of $\pos_{X, 2d}$ for elliptic normal curves is described in \Cref{thm:elliptic_normal}.

Extreme rays of positivity cones have been previously investigated in the literature. In \cite{choiExtremalPositiveSemidefinite1977}, the authors provide several examples of extreme rays (that are not sums of squares) of the nonnegative cone for the case $X = \PP^n$. Additional examples can be found in \cite{reznickConcreteAspectsHilbert2000}. We refer to \cite{schulzeConesLocallyNonNegative2021} for the description of Hilbert's cases and other results of local nature. The case of ternary sextics was studied recently in \cite{kunertExtremePositiveTernary2018}, where the authors characterize the sets of nine points in the projective plane that admit a nonnegative, extreme ternary sextic vanishing on them.
But these results are only partial, and before \Cref{thm:elliptic_normal} a complete description of the faces of the nonnegative cone was known to be possible only when the nonnegative cone coincides with the sums of squares cone.

In \Cref{sec:cara}, we move to the study of the dual cone $\pos_{X, 2}^\vee$, which consists of linear functionals in the dual space to quadrics on $X$, that can be expressed as integration with respect to a measure supported on $X(\RR)$. Equivalently, $\pos^\vee$ is the convex hull of the set of point evaluations of quadrics at the real points $X(\RR)$ of $X$. The \emph{Carath\'eodory number} $\car_{X,2}$ of $\pos_{X, 2}^\vee $ is the minimal natural number $\car_{X,2}$ such that any linear functional $L \in \pos_{X, 2}^\vee $ is conic sum of at most $\car_{X,2}$ point evaluations. This important quantity has been studied from different perspectives, and it is equivalent to determining the minimal number of nodes in a quadrature rule, or the maximal rank in a symmetric tensor decomposition with nonnegative coefficients. Our study of the nonnegative cone on elliptic normal curves allows us to determine the Carath\'eodory number of $\pos_{X, 2}^\vee$, or more generally of $\pos_{X, 2d}^\vee$, which remarkably depends on the topology of the real locus of the curve.

\begin{theoremintro}[{see \Cref{thm:car}}]\label{thm:D}
Let $X \subset \PP^n$ be a totally real elliptic normal curve.
\begin{enumerate}
    \item If $X(\RR)$ is connected, then $\car_{X,2} = \deg X = n+1$.
    \item If $X(\RR)$ is disconnected, then $\car_{X,2} = \deg X +1 = n+2$.
\end{enumerate}
\end{theoremintro}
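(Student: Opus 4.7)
The plan is to treat a universal lower bound, a general upper bound via \Cref{thm:C}, and a topological refinement separately.

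For the lower bound $\car_{X,2}\ge n+1$, valid in both cases, I would take $L$ in the relative interior of $\pos_{X,2}^\vee$ and note that in any conic representation $L=\sum_{i=1}^{k}\lambda_i\,\eval_{p_i}$ the parameters $(p_i,\lambda_i)$ contribute $2k$ real degrees of freedom, while $L$ ranges over the $(2n+2)$-dimensional space $R_2^*$; for generic $L$ this forces $k\ge n+1$.

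For the general upper bound $\car_{X,2}\le n+2$, I would first observe that if $L$ lies in the relative interior of an exposed face $\face_D^\diamond$ of $\pos_{X,2}^\vee$, its support is contained in $\supp D$ and \Cref{thm:C} gives $\deg D\le n+1$, so the length is at most $n+1$. For $L$ in the interior of $\pos_{X,2}^\vee$ I would perform a one-parameter reduction: for each $p\in X(\RR)$ let $\lambda^*(p)\ge 0$ be maximal with $L_p:=L-\lambda^*(p)\,\eval_p\in\pos_{X,2}^\vee$, so that $L_p$ lies on the boundary, in the relative interior of some exposed face $\face_{D(p)}^\diamond$ with $\deg D(p)\le n+1$. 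Concatenating a representation of $L_p$ with $\lambda^*(p)\,\eval_p$ produces a representation of $L$ of length at most $\deg D(p)+1\le n+2$; if moreover $p\in\supp D(p)$ we can merge terms and obtain length at most $n+1$.

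To refine the upper bound to $n+1$ in the connected case, I would show that as $p$ varies along the circle $X(\RR)$ one can find $p$ with either $\deg D(p)\le n$ or $p\in\supp D(p)$. By \Cref{thm:C}, a face divisor of degree exactly $n+1$ satisfies the discrete condition $[D-E]\in\jac(\RR)_2^+=\{O,T\}$, and a continuity argument combined with the connectedness of $X(\RR)$ rules out the bad alternative globally. For the matching lower bound $\car_{X,2}\ge n+2$ in the disconnected case, I would exhibit $L\in\pos_{X,2}^\vee$ whose mass distribution across the two components of $X(\RR)$ is incompatible with the torsion constraint $[D-E]\in\{O,T\}$ on any degree-$(n+1)$ divisor $D$ allowed by \Cref{thm:C}.

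The hardest step will be the refinement in the connected case: a priori the one-parameter reduction might always hit a degree-$(n+1)$ face with $p$ outside its support, and ruling this out globally requires a careful use of the real topology of $X$ together with the $2$-torsion classification in \Cref{thm:C}. The dichotomy in \Cref{thm:D} reflects exactly this global obstruction, which is present only when $X(\RR)$ has two components.
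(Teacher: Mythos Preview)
Your skeleton matches the paper closely for the easy bounds: the paper's \Cref{lem:car_ineq} gives $n+1\le \car_{X,2}\le n+2$ by essentially your dimension count and your one-parameter reduction to $\partial\pos^\vee$, combined with the boundary description in \Cref{cor:boundary_moment_cone}. Your ``merge'' observation is also correct: if $p\in\supp D(p)$ then the atoms of $L=L_p+\lambda^*(p)\eval_p$ all lie in $\supp D(p)$, so you get length $\le n+1$.

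The gap is in the refinement for the connected case. You propose to vary $p$ along $X(\RR)\cong S^1$ and use continuity together with the discrete torsion constraint $[D(p)-E]\in\{O,T\}$ to force $p\in\supp D(p)$ for some $p$. But you never explain what quantity is varying continuously or why connectedness produces a fixed point of the multivalued assignment $p\mapsto\{A_1(p),\dots,A_{n+1}(p)\}$. The torsion value being locally constant does not by itself create any link between $p$ and $\supp D(p)$; and the map $p\mapsto D(p)$ can jump when $L_p$ crosses lower-dimensional faces of $\partial\pos^\vee$. As stated, this step is a hope rather than an argument. The paper avoids this entirely: it adapts Hilbert's ternary-quartics method, studying the parametrization $\psi\colon\widehat{Y}^{n+1}\to R_2^*$, $\psi(A_1,\dots,A_{n+1})=\sum\ell_{A_i}^2$, and showing via the differential $d\psi$ that every boundary point of $\cone_{n+1}(Z)$ is annihilated by a quadric double-vanishing at $n+1$ points of $Y$, hence nonnegative on the connected component $Y$. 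A purely topological lemma (\Cref{lem:topology}) then forces $\cone_{n+1}(Z)=\pos_{Y,2}^\vee$.

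Your disconnected-case sketch is also too vague relative to what is actually needed. The paper does not argue abstractly about ``mass distribution incompatible with torsion''; it builds a specific witness. Using a \emph{non-positive} $2$-torsion point it produces $q$ nonnegative on one oval $Y_1$ and negative on $Y_2$ with $\div q=2(A_1+\dots+A_{n+1})$, $A_i\in Y_1$ distinct (\Cref{lem:positive_oval}). Then $L=\sum\eval_{A_i}$ lies in $\interior\pos_{X,2}^\vee$ but on $\partial\pos_{Y_1,2}^\vee$, and a sign argument via \Cref{lem:changing_sign} rules out any length-$(n+1)$ representation using an atom on $Y_2$. One then shows that locally near $L$ the set $\cone_{n+1}(Z)$ coincides with the strictly smaller $\pos_{Y_1,2}^\vee$, producing an open set of functionals with $\car=n+2$. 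Your proposal does not contain the ingredients (non-positive torsion, the sign-changing quadric, the local identification) needed to make this work.
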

The proof of \Cref{thm:D}, developed in \Cref{sec:critical} is inspired by Hilbert's proof that every nonnegative ternary quartic is a sum of at most three squares (see \cite{blekhermanLowRankSumofSquaresRepresentations2019} for a modern exposition). It is natural to ask if the same proof technique can be applied to curves of higher genus as well.

Exact results concerning Carath\'eodory numbers are rare in the literature. The rational univariate case is e.g. completely solved in \cite[Cor.~9.16]{schmudgenMomentProblem2017a}. To the best of our knowledge \Cref{thm:D} is the first complete description for non-rational curves, and the first time where the topology of the real supporting set is shown to play a key role in the study of Carath\'eodory numbers. \Cref{thm:D} distinguishes between the two possibilities left open from \cite[Th.~4.8]{didioMultidimensionalTruncatedMoment2021a}.
Asymptotic estimates for Carath\'eodory numbers on affine curves and for higher dimensional cases have been recently studied in \cite{rienerOptimizationApproachesQuadrature2018a,didioMultidimensionalTruncatedMoment2018, didioMultidimensionalTruncatedMoment2021a}.

We can also interpret the Carath\'eodory number as the rank of a real symmetric tensor decomposition with nonnegative coefficients, using forms supported on the Veronese embedding of the curve (we refer to \Cref{sec:waring} for more details). The similarities and differences between the complex case, real case, and real case with nonnegative coefficients have been investigated in \cite{qiSemialgebraicGeometryNonnegative2016,blekhermanRealRankRespect2016, angeliniRealIdentifiabilityVs2018}. 

We now describe our results on the truncated moment problem, contained in \Cref{sec:dual_plane_cubics}. We switch from the projective setting to the affine one, as it is more common in the analysis literature, and focus on the planar case.
Let then $X$ be an affine real cubic plane curve, whose projectivization is smooth. 
The \emph{moment problem} asks whether a liner functional $L$ can be written as integration with respect to a measure supported on $X(\RR)$. Unlike in the projective case, this is not equivalent to $L$ lying in the dual cone $\pos_{X,\le 2d}^\vee$, since the cone of linear functionals that are representable by measures is not always closed. The most effective criterion for solving this problem is the \emph{flat extension} criterion, which can be stated as follows. Given a linear functional $L$ acting on polynomials on degree $\le2d$, we say that $\widetilde L$, acting on polynomials of degree $\le 2d+2$, is a flat extension of $L$ if:
\begin{enumerate}
    \item $L$ is the restriction of $\widetilde L$ to polynomials of degree $\le 2d$; and
    \item the bilinear forms $(p,q) \mapsto L(pq)$ and $(p,q) \mapsto \widetilde L(pq)$ (defined respectively on polynomials of degree $\le d$ and $\le d+1$) have the same rank.
\end{enumerate}
The flat extension criterion states that if $L$ has a flat extension, then $L \in \pos_{X,\le 2d}^\vee$.

Leveraging our projective study of the Carath\'eodory numbers, we show that for plane cubics the moment problem is conveniently described via the existence of an \emph{almost flat extension} of $L$. This is an extension $\widetilde L$ of $L$ to degree $\le 2d+4$, where the rank of the associate bilinear form is allowed to grow by one. We refer to \Cref{sec:almost_flat} for more details. Note that the existence of an almost flat extension guarantees a flat extension either between degree $2d$ and $2d+2$, or between degree $2d+2$ or $2d+4$.

\begin{theoremintro}[{see \Cref{prop:almost_flat}}]\label{thm:E}
        Let $X(\RR) \subset \RR^2$ be the real locus of a totally real affine plane cubic, whose projectivization $\overline{X}$ is smooth, and let $L$ be a linear functional acting on $\RR[x, y]_{\le 2d}\big/ \cI(X)_{\le 2d}$. Then $L$ is a moment linear functional if and only if $L$ admits an almost flat extension.
\end{theoremintro}
The proof of \Cref{thm:E} is given in \Cref{sec:moment_problem_for_cubics}. We provide refined versions of this result in \Cref{thm:flat_conn_one_infty,thm:almost_flat}, which take into account the topology of the curve in analogy to the projective case.
To the best of our knowledge, this is the first solution of the moment problem for irreducible, nonrational curves.

The truncated moment problem for plane cobic curves was previously studied, e.g., in \cite{fialkowSolutionTruncatedMoment2011, zalarTruncatedMomentProblem2023, zalarTruncatedMomentProblem2022} for the union of parallel lines. All these works deal with (unions of) rational curves, and the authors are able to characterize membership in the moment cone with the flat extension condition. The flat extension condition is not sufficient in our genus one case, and to overcome this problem we introduce the notion of almost flat extension. Moreover, we remark that these results apply for linear functionals $L\in \pos_{X,\le 2d}^\vee$ for large enough $d$, while \Cref{thm:E} applies for any $d\ge 1$. A complete solution of the truncated moment problem for plane cubics has recently appeared in \cite{kummer2026nonnegativepolynomialsgeneralizedellipticv1}, extending \Cref{thm:E} to singular and reducible curves.

In this work, we restricted ourselves to studying positivity cones inside $\RR[X]_{2d}$, to keep the article accessible also to researchers in functional analysis. However, analogous results apply for the convex cones of nonnegative sections in $H^0(X, L \otimes L)$, where $L = \mathcal{O}_X(D)$ is an (ample, real) line bundle on $X$. This is the point of view taken e.g. in \cite{kummer2026nonnegativepolynomialsgeneralizedellipticv3} and in \cite[Sec.~5]{blekhermanSumsSquaresVarieties2016a}.

\section{Preliminaries and notations}\label{sec:preliminaries}
\subsection{Real algebraic curves}
\label{sec:real_curves}
We refer to \cite{mangolteRealAlgebraicVarieties2020a} for the basics of real algebraic geometry, which we briefly recall in this section. In the following, varieties are always to be understood as \emph{reduced}, \emph{(geometrically) irreducible} and \emph{smooth}.

An \emph{$\RR$-variety} is a pair $(X, \sigma)$, where $X$ is an (abstract) complex algebraic variety and $\sigma$ is an anti-regular (or anti-holomorphic) involution on $X$, see \cite[Def.~2.1.12]{mangolteRealAlgebraicVarieties2020a}. We will often omit $\sigma$ from the notation. The \emph{real locus} of the $\RR$-variety $(X,\sigma)$, denoted $X(\RR)$, is the set of fixed points for $\sigma$, i.e. $X(\RR) \coloneqq \{\, x\in X \colon \sigma(x)=x \,\}$. We denote by $\CC(X)$ the $\CC$-algebra of rational functions and by $\RR(X)$ the $\RR$-algebra of real rational functions on $X$. We say that an $\RR$-variety is \emph{totally real} if $X(\RR)$ is Zariski dense in $X$ (these are called varieties with \emph{enough real points} in \cite{mangolteRealAlgebraicVarieties2020a}).

Concretely, we are interested in \emph{real algebraic projective subvarieties}, which we call \emph{real varieties} for short.  
These are subvarieties $X \subset \PP^n = \PP^n(\CC)$ of the projective space, which are invariant under the natural involution of $\PP^n$, i.e. the coordinate-wise conjugation. Equivalently, real varieties $X\subset \PP^n$ are given as the zero locus of (finitely many) real $n$-variate homogeneous polynomials $\{ \, p_1 , \dots , p_a \}  \subset \RR[x_0, \dots , x_n] = \RR[\vb x]$.
The real homogeneous coordinate ring is $\RR[X] = \RR[\vb x]/I$, where $I=\cI(X)$ is the ideal of (real) polynomials vanishing on $X\subset \PP^n$. $\RR[X]$ is a graded ring, with the grading induced by $\RR[\vb x] = \RR[\PP^n]$. We denote by $R_k \coloneqq \RR[X]_k = \RR[\vb x]_k/I_k$ the $k$-graded part, i.e. the $\RR$-vector space of homogeneous real polynomials (or \emph{forms}) of degree $k$ on $X(\RR)$. 
The field of real rational functions on $X$ is $\RR(X) = \quot (\RR[U])$, where $U\subset X$ is an open real affine variety and $\RR[U]$ is the coordinate ring of $U$.

If $\dim X = 1$ (dimension as a complex algebraic variety), we say that $(X, \sigma)$ (resp. $X \subset \PP^n$) is a real curve (resp. a real projective curve). When $X$ is totally real, the real locus $X(\RR)$ of a real curve is a real differentiable manifold of dimension $1$.

A \emph{(Weil) divisor} $D = \sum_{A \in X} a_A A$ on a curve $X$ is an element of the free abelian group on the points of $X$. This means that a divisor is a formal combination of points $A \in X$ with integer coefficients $a_A \in \ZZ$, where $a_A = 0$ for all but finitely many $A \in X$. We denote by $(\Div{X},+)$ the abelian group of divisors on $X$. A divisor $D = \sum_{A \in X} a_A A \in \Div X$ is called \emph{effective} if $a_A \ge 0$ for all $A$. The \emph{support} of $D$, denoted $\supp D$, is the set of $A \in X$ such that $a_A \neq 0$.

We say that a divisor $D = \sum_{A \in X} a_A A$ on a real curve $X$ is \emph{real} if it is invariant under $\sigma$, i.e. if $\sum_{A \in X} a_A \, A = \sum_{A \in X} a_A \, \sigma(A)$. We say that $D = \sum_{A \in X} a_A \, A$ is \emph{totally real} if the support of $D$ is contained in $X(\RR)$, i.e. if $a_A =0$ for all $A \in X \setminus X(\RR)$. We are particularly interested in totally real effective divisors, i.e. divisors with nonnegative integer coefficients whose support is included in $X(\RR)$. Given a divisor $D = \sum_{A \in X} a_A A$, we write $D_{\RR} = \sum_{A \in X(\RR)} a_A A$ and $D_{\CC} = \sum_{A \in X\setminus X(\RR)} a_A A$. Then $D$ is totally real if and only if $D = D_{\RR}$.

\subsection{Convex geometry}
\label{sec:convex_geom}
We refer to \cite{rockafellarConvexAnalysis1970a, barvinokCourseConvexity2002} for the basics of convex geometry.

Given a finite dimensional vector space $V$, we call $\mathrm{C} \subset V$ a \emph{convex cone} if $\RR_{\ge 0} \cdot \mathrm{C} \subset \mathrm{C}$ and $\mathrm{C} + \mathrm{C} \subset \mathrm{C}$. Given $B \subset V$, we denote by $\cone (B)$ the smallest convex cone containing $B$, called the \emph{conic hull} of $B$. We say that a convex cone $C$ is \emph{closed} if it is closed as a subset of $V$ with the Euclidean topology. We say that $C$ is \emph{pointed} if $C \cap -C = \{ 0\}$. In this paper, we will mostly work with convex cones inside $R_2 = \RR[X]_2$, the real vector space of quadratic forms restricted to a totally real projective curve $X \subset \PP^n$.

A \emph{face} of a convex cone $\mathrm{C}$ is a {convex cone} $\mathrm{F} \subset \mathrm{C}$ such that, for all $a, b\in \mathrm{C}$, $a+b \in \mathrm{F}$ implies $a\in \mathrm{F}$ and $b \in \mathrm{F}$. The \emph{dimension} of a face $\mathrm{F}$, denoted by $\dim \mathrm{F}$, is the dimension of the smallest vector subspace $W\subset V$ containing $\mathrm{F}$, called the \emph{linear span} of $\mathrm{F}$. Faces of dimension one are called \emph{extreme rays} of $\mathrm{C}$. A closed {pointed} convex cone is equal to the conic hull (or convex hull) of its extreme rays, see e.g. \cite[Th.~18.5]{rockafellarConvexAnalysis1970a}.

The \emph{relative interior} of a face $\mathrm{F}$, denoted $\relint \mathrm{F}$, is the interior of $\mathrm{F}$ when regarded as a subset of its linear span, equipped with the Euclidean topology. If, given two faces $\mathrm{F}_1, \mathrm{F}_2 \subset \mathrm{C}$, there exists $a \in \relint \mathrm{F}_1 \cap \relint \mathrm{F}_2$, then $\mathrm{F}_1 = \mathrm{F}_2$ (see \cite[Th.~18.1.2]{rockafellarConvexAnalysis1970a}).

Given a real vector space $V$, we denote by $V^*\coloneqq\hom_{\RR}(V, \RR)$ the dual space of $\RR$-linear functions. If $\mathrm{C} \subset V$ is a convex cone, $\mathrm{C}^\vee \coloneqq \{ \, L\in V^* \colon L(V) \subset \RR_{\ge 0} \, \}$ is the \emph{dual cone} to $\mathrm{C}$. A face $\mathrm{F} \subset \mathrm{C}$ is called \emph{exposed} if $\mathrm{F} = L^{-1}(0) \cap C$ for some $L \in \mathrm{C}^\vee$. \emph{Conic duality} states that, given a \emph{closed} convex cone $\mathrm{C}$, we have $(\mathrm{C}^\vee)^\vee = \mathrm{C}$.
\subsection{Nonnegative forms and the moment problem}
\label{sec:moment_prob}
We refer to \cite{blekhermanSemidefiniteOptimizationConvex2012, blekhermanSumsSquaresVarieties2016a, schmudgenMomentProblem2017a} for the study of cones of nonnegative forms and their dual cones.

In the following, we consider a totally real projective subvariety $X \subset \PP^n$.
If $k=2d$ is even, every $q\in R_{2d} = \RR[X]_{2d}$ has a well-defined sign on every $A \in X(\RR)$. We write $q(A)\ge0$ (resp. $q(A)\le 0$) if the sign of $q$ at $A$ is $0$ or positive (resp. negative). If the form $q$ is such that $q(A) \ge 0$ for all $A\in X(\RR)$, we say that $q$ is \emph{nonnegative}. We define:
\begin{equation}
    \pos_{X,2d} \coloneqq \big\{ \, q \in R_{2d} \mid \forall A \in X(\RR) \ q(A) \ge 0 \, \big\} = \big\{ \, q \in R_{2d} \mid q \text{ is nonnegative} \, \big\}\notag
\end{equation}
\begin{equation}
    \Sigma_{X,2d} \coloneqq \left\{ \, \sum_{i=1}^r p_i^2 \in R_{2d} \mid r\in \NN, \ p_1, \dots p_r \in R_d \, \right\}\notag
\end{equation}
The elements of $\Sigma_{X,2d}$ are called \emph{sums of squares} forms.
Clearly, every sum of squares is nonnegative, i.e. $\Sigma_{X,2d} \subset \pos_{X, 2d}$. When clear from the context, we will write $\pos \coloneqq \pos_{X,2d}$ and $\Sigma \coloneqq\Sigma_{X,2d}$. $\pos$ and $\Sigma$ are full dimensional, closed, pointed convex cones in the vector space $R_{2d}$, see \cite{blekhermanSumsSquaresVarieties2016a}.

The dual convex cone $\pos^\vee$ can be characterised as follows. Denote by $\widehat{X}(\RR) \subset \RR^{n+1}$ the affine cone over $X(\RR)\subset \PP^n(\RR)$, and by $\SS^n$ the unit sphere in $\RR^{n+1}$. For every $x \in \widehat{X}(\RR)$, denote by $\eval_x \in R_{2d}^*$
the point evaluation at $x$, i.e. $\eval_x(q)\coloneqq q(x)$ for $q \in  R_{2d}$. Then:
\begin{equation}
    \pos^\vee = \pos_{X,2d}^\vee = \cone \left( \eval_A \colon A \in \widehat{X}(\RR) \right) \notag
\end{equation}
see \cite{blekhermanSumsSquaresVarieties2016a}. We will often replace $\widehat{X}(\RR)$ by ${X}(\RR)$, writing $\eval_A$ for $A \in X(\RR)$, identifying $A$ with any of its affine representatives in $\widehat{X}(\RR) \cap \SS^n$.
The \emph{moment problem} is the problem of determining, given $L \in R_{2d}^*$, whether $L \in \pos^\vee$ or $L\notin \pos^\vee$.

If $L \in \pos^\vee$, we define the \emph{Carath\'eodory number} of $L$ as:
\[
    \car_{X, 2d}(L) \coloneqq \min \big\{ \, r\in \NN \mid \exists x_1, \dots x_r \in \widehat{X}(\RR) \text{ s.t. } L \in \cone(\eval_{x_1}, \dots , \eval_{x_r}) \,  \big\}
\]
and the \emph{Carath\'eodory number} of $\pos^\vee$ as \[\car_{X, 2d} \coloneqq \max \big\{ \, \car_{X, 2d}(L) \colon L \in \pos^\vee \, \big\}\]
The main goal of this article is to study in detail the convex cones $\pos_{X,2d}$, $\pos_{X,2d}^\vee$, to determine the Carath\'eodory number $\car_{X,2d}$ when $X \subset \PP^n$ is a totally real elliptic normal curve, and finally to apply these results to solve the moment problem on affine plane cubics.
\section{The convex cone of nonnegative forms on projective curves}\label{sec:psd_forms_curves}
In the following, $X \subset \PP^n$ is a smooth, irreducible, totally real projective curve. We want to study the convex cone $\pos_{X,2d} \subset R_{2d} = \RR[X]_{2d}$ of nonnegative forms on $X(\RR)$ of degree $2d$.
We can restrict to the case $d=1$, since $\RR[X]_{2d} \cong \RR[\nu_{n,d}(X)]_2$, where $\nu_{n,d}(X)$ denotes the $d$-th Veronese embedding of $X \subset \PP^n$.

We then fix $\pos = \pos_{X,2}$. Our goal is to understand the relationship between faces $\face \subset \pos$ and totally real effective divisors $D \in \Div X$. Recall that, given any divisor $D \in \Div(X)$, we can uniquely write $D = D_\RR + D_\CC$, where $\supp D_{\RR} \subset X(\RR)$ and $\supp D_\CC \subset X \setminus X(\RR)$, and a divisor $D$ is totally real if and only if $D = D_{\RR}$.

\subsection{Bivariate forms}\label{sec:P1}
Before developing the general theory, we describe the special case of $\PP^1$. For a bivariate form $q \in \RR[x_0, x_1]_{2d}$, we denote $\div q$ its divisor of zeroes and write $\div q = (\div q)_\RR + (\div q)_\CC$, where the support of  $(\div q)_\RR$ and $(\div q)_\CC$ is included in $\PP^1(\RR)$ and $\PP^1 \setminus \PP^1(\RR)$ respectively.
\begin{example}[{see also \cite[Prop.~1.4.4]{schulzeConesLocallyNonNegative2021}}]\label{ex:P1} 
    Let $\pos = \pos_{\PP^1, 2d}$ be the convex cone of nonnegative bivariate homogeneous polynomials, or bivariate forms, of degree $2d$. Every nonnegative $q \in \pos$ has all zeroes with even multiplicity on $\PP^1(\RR)$ (otherwise $q$ would change sign).
    We therefore have $(\div q)_\RR = 2D$, for some totally real effective divisor $D$ of degree $\le d$.
    We can determine the faces of $\pos$ by specifying real zeroes on $\PP^1$, as follows: given a totally real effective divisor $D$ of degree $\le d$, we define
    \[
        \face_{D} \coloneqq \left\{ \, q \in \pos \mid \div q \ge 2D \, \right\}
    \]
    These are faces of $\pos$. By divisibility properties of univariate polynomials, every divisor $D$ imposes $2\deg D$ conditions in the $2d+1$ dimensional space of bivariate forms $\RR[x_0, x_1]_{2d}$. More precisely, we have
    $\dim \face_{D} = 2(d - \deg D) + 1$, as we show in the following.
    \begin{itemize}
        \item If $D = 0$ is the empty divisor, then $\face_D = \pos$, and we want to show that $\dim \pos = 2d+1$. Therefore we only need to show that $\pos$ has nonempty interior. But this is clear, as e.g. the form $x_0^{2d} + x_1^{2d}$ is strictly positive on $\PP^1(\RR)$, and it can be perturbed in any direction while remaining strictly positive.
        \item If $D = A$ for some $A \in \PP^1(\RR)$, then $\face_D = \face_A$ consists on nonnegative forms vanishing on $A$. This imposes two conditions on form of degree $2d$, and $\face_D$ is full dimensional in its $2(d-1)+1$ linear span given by
        $V_D \coloneqq \{ \, q \in \RR[x_0, x_1]_{2d} \colon \mult_A(q) \ge 2 \, \}$.
        For instance, if $A = (a:b)$, then
        the form $(bx_0 - ax_1)^2(x_0^{2d-2} + x_1^{2d-2})$ lies in the relative interior of $\face_{D}$, as it vanishes exactly with multiplicity $2$ at $A$ on all $\PP^1(\RR)$: $\div ((bx_0 - ax_1)^2(x_0^{2d-2} + x_1^{2d-2}))_\RR = 2A$.
        \item In general, if $D$ is a totally real effective divisor with $\deg D \le d$, then $\face_D$ is full dimensional in $V_D = \{ \, q \in \RR[x_0, x_1]_{2d} \colon \div q \ge 2D \, \}$, which is a $2(d-\deg D)+1$ dimensional vector space. A point in the relative interior of $\face_D$ can be constructed as follows. If $D = A_1 + \ldots + A_k$ for points $A_i = (a_i:b_i)\in \PP^1(\RR)$, set $f = \prod_{i=1}^k (b_i x_0 - a_i x_1)^2$. Then $f(x_0^{2d-2k} + x_1^{2d-2k})$ lies in the relative interior of $\face_D\subset V_D$.
    \end{itemize}
    All the faces of $\pos$ are of the form $\face_D$ for some totally real effective divisor $D$. Indeed, if $\face \subset \pos$ is a face, then pick $q \in \relint \face$, and write $(\div q)_\RR = 2D$. It is then possible to show that $q \in \relint \face_{D}$, and therefore $\face = \face_D$ from \cite[Cor. 18.1.2]{rockafellarConvexAnalysis1970a} (see also \Cref{lem::rel_int}).
\end{example}

In the next sections, we will generalize \Cref{ex:P1} to arbitrary totally real projective curves $X\subset \PP^n$.

\subsection{Faces and divisors}\label{sec:faces_to_divisors}

 Let $X \subset \PP^n$ be a smooth, irreducible, totally real projective curve. For $0 \neq q \in \CC[X]$ we consider the divisor of $q$, see e.g. \cite[p.~152]{shafarevichBasicAlgebraicGeometry2013}, denoted $\div q$.
This divisor encodes the intersection points (with multiplicity) of $X$ inside $\PP^n$ with the hypersurface defined by any $Q\in \C[\vb x]$ such that $Q + I(X) = q$ in $\C[X]$: $\div q = \sum_{A\in X} \mult_A(Q) \, A$.
We have $\deg (\div q) = \sum_{A\in X} \mult_A(Q) = \deg q \cdot \deg X$.

We now consider $q \in \pos \subset R_2 = \RR[X]_2$. Since $q$ does not change sign on $X(\RR)$, it does not change sign on any connected component of $X(\RR)$. Therefore, if $q$ has a zero at $A \in X(\RR)$, the multiplicity of $q$ at $A$ is even. This implies that $(\div q)_{\RR} = 2D$ for some effective totally real divisor $D$.

We now show that any non-zero face $\face \subset \pos$ determines a unique totally real effective divisor $D$ on $X \subset \PP^n$.
\begin{lemma}\label{lem::rel_int}
    Let $\{ 0 \} \neq \face \subset \pos$ be a face. Then there exists a unique totally real effective divisor $D \subset \Div(X)$ such that, for all $q$ in the relative interior of $\face$, $(\div q)_\RR = 2D$.
\end{lemma}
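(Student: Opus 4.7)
The plan is to show that the divisor $\tfrac{1}{2}(\div q)_{\RR}$ is constant as $q$ ranges over $\relint \face$, which gives both existence and uniqueness of the divisor $D$ simultaneously.

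\textbf{Step 1 (Setup).} For any $q \in \pos$, since $q$ does not change sign on $X(\RR)$, each real zero of $q$ must occur with even multiplicity. Hence $(\div q)_{\RR} = 2 D_q$ for some totally real effective divisor $D_q$. What must be shown is that $D_q$ does not depend on the choice of $q \in \relint \face$.

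\textbf{Step 2 (No cancellation for nonnegative sums).} The crucial analytic input is the following: for nonnegative forms $q_1, q_2 \in \pos$ and any $A \in X(\RR)$,
\[
    \mult_A(q_1 + q_2) \;=\; \min\!\bigl(\mult_A(q_1),\, \mult_A(q_2)\bigr).
\]
Working in a local uniformizer $t$ at $A$, write $q_i = c_i t^{2 a_i} + O(t^{2 a_i + 1})$ with $c_i \geq 0$ (and $c_i > 0$ once we extract the leading term). If $a_1 \neq a_2$ the minimum is achieved with no cancellation; if $a_1 = a_2$, then the leading coefficient of $q_1 + q_2$ is $c_1 + c_2 > 0$, so the multiplicity stays at $2 a_1$. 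Either way there is no accidental cancellation, precisely because positivity forbids sign changes in the leading coefficients.

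\textbf{Step 3 (Relative interior versus divisors).} I will use the standard convex-analytic fact that $q_0 \in \relint \face$ if and only if, for every $q \in \face$, there is some $\epsilon > 0$ with $q'' := (1+\epsilon) q_0 - \epsilon q \in \face$ (see e.g.\ \cite[Th.~6.4]{rockafellarConvexAnalysis1970a}). Rewriting this as $(1+\epsilon) q_0 = q'' + \epsilon q$ and applying Step~2 to the nonnegative summands $q''$ and $\epsilon q$ yields, for every $A \in X(\RR)$,
\[
    \mult_A(q_0) \;=\; \min\!\bigl(\mult_A(q''),\, \mult_A(q)\bigr) \;\leq\; \mult_A(q).
\]
Thus $(\div q_0)_{\RR} \leq (\div q)_{\RR}$ for every $q \in \face$.

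\textbf{Step 4 (Conclusion).} If $q_0, q_1 \in \relint \face$, applying Step~3 first with $q_0$ in the interior and $q = q_1$, and then with the roles reversed, gives $(\div q_0)_{\RR} = (\div q_1)_{\RR}$. Defining $2 D := (\div q_0)_{\RR}$ for any (hence every) $q_0 \in \relint \face$ gives the unique totally real effective divisor asserted in the statement. No step in the argument is genuinely hard; the only delicate point is the leading-term observation in Step~2, which is what makes the minimum-of-multiplicities formula valid specifically for the nonnegative cone, and which is what ultimately forces the divisor to stabilize on the relative interior.
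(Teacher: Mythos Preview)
Your proof is correct. Both arguments rest on the same analytic observation---that for nonnegative forms at a real point the leading coefficients in a local parameter are strictly positive, so multiplicities add by the $\min$ rule rather than cancelling---but you and the paper package the convex geometry differently. The paper fixes a point $A$, defines $\mu_A$ as the minimum multiplicity over all of $\face$, and then builds an explicit supporting linear functional (the leading Taylor coefficient at order $2\mu_A$) to witness that any $q$ with higher multiplicity lies on a proper face of $\face$; hence relative-interior points have multiplicity exactly $\mu_A$ everywhere. You instead invoke the ``extend past'' characterization of the relative interior and apply the $\min$-multiplicity rule directly to the decomposition $(1+\epsilon)q_0 = q'' + \epsilon q$, which yields $\mult_A(q_0)\le\mult_A(q)$ for every $q\in\face$ without ever naming a functional. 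Your route is a little shorter and avoids the auxiliary space $V_A$; the paper's route has the minor advantage of exhibiting the separating hyperplane explicitly, which is reused later (e.g.\ in the proof of \Cref{thm:char_faces}). Incidentally, your Step~2 is exactly the fact the paper states and uses in the proof of \Cref{lem:div_to_face}, so the two arguments are close cousins.
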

\begin{proof}
    For $A \in X(\RR)$, let $\mu_A = \min \{ \, \mult_A(q) \colon q \in \face \,\}/2$. Note that, since $\face \neq \emptyset$, we have $\mu_A \neq 0$ only for finitely many $A\in X(\RR)$ (any quadratic form $q\in \CC[X]$, $q\neq 0$, only vanishes at finitely many points on $X$). We show that, for all $q \in \face$ and $A \in X(\RR)$, $\mult_A(q) > 2\mu_A$ implies $q \notin \relint \face$. So we define the totally real effective divisor $D \coloneqq \sum_{A\in X(\RR)} \mu_A \, A$. The following argument then shows $(\div q)_\RR = 2D$ for all $q \in \relint \face$ as claimed.

    Consider the vector space $V_A = \{ \, q \in R_2 \colon \mult_A(q) \ge 2\mu_A \, \}$. By definition, $\face \subset V_A$.
    We construct a linear functional $L \colon V_A \to \RR$ as follows.
    Consider a local parameter $t$ at $A \in X(\RR)$, and expand $q \in V_{A}$ locally around $A$ so that $q = a_q t^{2 \mu_A} + \dots$, where the dots represent higher order terms in $t$. We define $L(q) = a_q$. Since every $q\in \face$ is nonnegative on $X(\RR)$ we conclude $L(q) \ge 0$ for all $q \in \face$, i.e. $L\in \face^\vee \subset V_A^*$. By definition of $\mu_A$, there exists a form $g \in \face$ with $\mult_A(g) = 2\mu_A$, which implies $L(g)>0$. So $L$ defines a supporting hyperplane to $F$ containing any $q\in F$ with $\mult_A(q) > 2\mu_A$, showing $q \notin \relint \face$.
\end{proof}

Since the divisor $(\div q)_\RR$ is equal for all $q\in \relint \face$ by \Cref{lem::rel_int} we can make the following definition.
\begin{definition}\label{def:face_divisor}
    Given a face $\{ \,\{ 0 \} \, \} \neq \face \subset \pos$, we define the \emph{face divisor} of $\face$ as the unique totally real divisor $\div(\face)$ such that $(\div q)_\RR = 2\div(\face)$ for all $q \in \relint \face$.
\end{definition}

We have found a way to go from faces to divisors. We can also go in the other direction, as follows.

\begin{definition}
    Given a totally real effective divisor $D$, we define \[\face_{D}\coloneqq \{ q \in P \colon \div(q) \ge 2D \}\] the face of $\pos$ consisting of quadratic forms vanishing on $X(\RR)$ with at least twice the multiplicity given by $D$. We call $F_D$ the \emph{face associated to the divisor $D$}.
\end{definition}
\begin{lemma}
\label{lem:div_to_face}
    $\face_D$ is a face of $\pos$.
\end{lemma}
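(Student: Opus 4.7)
The plan is to verify directly the two conditions in the definition of a face: that $\face_D$ is a convex subcone of $\pos$, and that it is \emph{absorbing} under sums, i.e., if $q_1+q_2 \in \face_D$ with $q_1,q_2\in\pos$, then both $q_i$ already lie in $\face_D$.

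The convexity (indeed, the fact that $\face_D$ is a convex subcone of $\pos$) is immediate from the elementary inequality $\mult_A(q_1+q_2) \geq \min(\mult_A q_1, \mult_A q_2)$ valid on any curve, combined with $\mult_A(\lambda q) = \mult_A q$ for $\lambda > 0$. Both multiplicities being $\geq 2\mult_A D$ forces the same for sums and positive multiples, so $\face_D \subset \pos$ is closed under conic combinations.

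The crux is the face (absorption) property. Here I would use nonnegativity in an essential way. Fix $A \in \supp D$ with $\mult_A D = k$, and a local uniformizer $t$ on $X$ at $A$. Expand $q_i = c_i\, t^{2a_i} + O(t^{2a_i+1})$ where $2a_i = \mult_A q_i$ (the multiplicity is even because each $q_i$ is nonnegative on $X(\RR)$ and hence cannot change sign, as in the opening of \Cref{sec:faces_to_divisors}). By the same nonnegativity argument, $c_i > 0$: if $c_i<0$, the leading term would force $q_i$ to be negative for small real $t$, and if $c_i=0$ then $\mult_A q_i$ would exceed $2a_i$. Therefore the leading term of $q_1+q_2$ equals $c_1 t^{2a_1}$ or $c_2 t^{2a_2}$ when $a_1\ne a_2$, and equals $(c_1+c_2)t^{2a_1}$ with $c_1+c_2>0$ when $a_1=a_2$. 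In every case, no cancellation occurs, giving the exact identity
\[
\mult_A(q_1+q_2) \;=\; \min\bigl(\mult_A q_1,\mult_A q_2\bigr).
\]

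Using this identity, the hypothesis $q_1+q_2 \in \face_D$ reads $\min(\mult_A q_1, \mult_A q_2) \geq 2k$ at every $A \in \supp D$, which forces $\mult_A q_i \geq 2k$ for both $i=1,2$. Since $D$ is totally real and effective and the condition $\mult_A q_i \geq 2\mult_A D$ is trivial off $\supp D$, we conclude $\div q_i \geq 2D$, i.e., $q_1, q_2 \in \face_D$, as required. The only subtle step is the strict positivity of the leading coefficients $c_i$, which is where nonnegativity on $X(\RR)$ (rather than mere evenness of multiplicity) is used; without it the multiplicity of a sum could jump, and the face property would fail.
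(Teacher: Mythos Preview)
Your proof is correct and follows essentially the same approach as the paper's: both argue that $\face_D$ is a convex cone (the paper uses that $V_D$ is linear, you use the multiplicity inequality directly), and both establish the face property via the identity $\mult_A(q_1+q_2)=\min(\mult_A q_1,\mult_A q_2)$ for nonnegative $q_1,q_2$. Your version spells out via a local uniformizer why the leading coefficients are strictly positive and hence cannot cancel, whereas the paper simply asserts the valuation identity; the content is the same.
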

\begin{proof}
    Since $V_D = \{q\in \RR[X] \colon \div(q) \ge 2D \}$ is a linear space, we conclude that $F_D$ is a convex cone contained in $\pos$. To show that it is a face, pick $q_1, q_2 \in \pos$ with $q_1+q_2 \in \face_D$. So we have $\mult_A(q_1+q_2) \ge \mult_A(2D)$ for all $A\in X(\RR)$. Since $q_1$ and $q_2$ are nonnegative on $X(\RR)$, the valuation $\nu_A(q_1+q_2)$ is equal to $\min\{\nu_A(q_1),\nu_A(q_2)\}$ so that we also get $\mult_A(q_1) \ge \mult_A(2D)$ and $\mult_A(q_2) \ge \mult_A(2D)$ for all $A \in X(\RR)$. Therefore $q_1,q_2 \in \face_{D}$, showing that $\face_D$ is a face. 
\end{proof}

Denote by $\faces$ the set of faces of $\pos$ and by $\Div_{\ge 0}(X(\RR))$ the set of totally real effective divisors. We define
\newline
    \begin{minipage}{0.45\textwidth}
        \begin{align*}
            \Phi \colon \faces\setminus\{\, \{0\}\, \} & \longrightarrow \Div_{\ge 0}(X(\RR)) \\
            \face & \longmapsto \div(\face)
        \end{align*}    
    \end{minipage}
    \begin{minipage}{0.45\textwidth}
        \begin{align*}
            \Psi \colon \Div_{\ge 0}(X(\RR))\supset \Im(\Phi)&        \longrightarrow \faces  \\
            D & \longmapsto \face_D
        \end{align*}    
    \end{minipage}\\
$\Phi$ and $\Psi$ are, by definition, two order-reversing maps between the poset of (nonzero) faces $\faces\setminus\{\, \{0\}\, \}$ and the face divisors $\Im(\Phi) \subset \Div_{\ge 0}(X(\RR))$. Here, we order $\faces$ by inclusion and $\Im(\Phi)$ by the usual order on $\Div(X)$ defined as such: $\sum_{P\in X} a_P P\leq \sum_{P\in X} b_P P$ if $a_P\leq b_P$ for all $P\in X$. 

We now show that every face can be realized as the face associated to a totally real effective divisor. In other words, we prove that $\Psi$ is surjective.

\begin{theorem}\label{thm:char_faces}
    Let $X \subset \PP^n$ be a totally real curve and $\pos = \pos_{X, 2}$ the convex cone of nonnegative quadrics. For any face $\face \subset \pos$, $F \neq \{0\}$ the following statements hold.
    \begin{enumerate}
        \item $\face = \face_{\div(\face)} = (\Psi \circ \Phi)(\face)$;
        \item The linear span of $\face$ is $V_{\div(\face)} = \{ \, q \in R_2 \colon \div q \ge 2\div(\face) \, \}$;
        \item $\relint \face = \{ \, q\in \pos \colon (\div q)_{\RR} = 2\div(\face) \, \}$.
    \end{enumerate}
\end{theorem}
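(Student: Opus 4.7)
The plan is to prove parts (i), (ii), (iii) simultaneously by producing a single $q_0 \in \face$ lying in the relative interior of $\face_D$, where $D = \div(\face)$, and then invoking the fact that each point of $\pos$ belongs to the relative interior of a unique face of $\pos$ \cite[Thm.~18.2]{rockafellarConvexAnalysis1970a}. The inclusion $\face \subset \face_D$ is immediate from the proof of \Cref{lem::rel_int}: the multiplicities $\mu_A = \tfrac12\min\{\mult_A(q) : q \in \face\}$ defining $D$ satisfy $\mult_A(q) \ge 2\mu_A$ for every $q \in \face$ and every $A \in X(\RR)$, so $\div q \ge 2D$.

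To construct $q_0$, for each $A \in \supp D$ fix $q_A \in \face$ realizing $\mult_A(q_A) = 2\mu_A$, and let $B$ be the finite set of points in $X(\RR) \setminus \supp D$ at which some $q_A$ vanishes. For each $A' \in B$ pick $q_{A'} \in \face$ with $q_{A'}(A') \ne 0$, which is possible since $\mu_{A'} = 0$. Set $q_0 \coloneqq \sum_{A \in \supp D} q_A + \sum_{A' \in B} q_{A'} \in \face$. Nonnegativity at each real zero forces the local leading coefficient in a uniformizer $t$ to be strictly positive, so there is no cancellation in the sum; a short case analysis over $A^* \in \supp D$, $A^* \in B$, and $A^* \notin \supp D \cup B$ gives $\mult_{A^*}(q_0) = 2\mu_{A^*}$ in the first case and $q_0(A^*) > 0$ in the others, so $(\div q_0)_\RR = 2D$.

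The technical heart is the following perturbation claim: any $q \in \pos$ with $(\div q)_\RR = 2D$ satisfies $q + \epsilon \tilde q \in \pos$ for every $\tilde q \in V_D \coloneqq \{q' \in R_2 : \div q' \ge 2D\}$ and all sufficiently small $|\epsilon|$. Compactness of $X(\RR)$ handles the complement of a small neighborhood $U$ of $\supp D$, where $q$ is bounded below by a positive constant and $\tilde q$ is bounded. Near each $A \in \supp D$, in a uniformizer $t$ we have $q = a t^{2\mu_A}(1 + O(t))$ with $a > 0$ and $\tilde q = t^{2\mu_A}(b + O(t))$ for some $b \in \RR$, so $q + \epsilon \tilde q = (a + \epsilon b) t^{2\mu_A}(1 + O(t))$ is nonnegative near $A$ once $|\epsilon|$ is small. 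Writing $\tilde q = \epsilon^{-1}((q + \epsilon \tilde q) - q)$ as a difference of elements of $\face_D$ then gives $V_D \subset \mathrm{span}(\face_D)$ (the reverse inclusion is trivial), and the same perturbation shows that $q$ lies in $\relint \face_D$ in $V_D$.

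Applying the perturbation claim to $q_0$ yields $q_0 \in \relint \face_D$, and uniqueness of the face whose relative interior contains $q_0$ forces $\face_D \subset \face$, so $\face = \face_D$; this is (i). Statement (ii) is the linear-span assertion established in the perturbation claim, and (iii) follows by combining the forward direction of the perturbation claim with the reverse inclusion already in \Cref{lem::rel_int}. The main obstacle is the local analysis in the perturbation claim: what makes nonnegativity an open condition on $V_D$ near $q_0$ is precisely that at a real zero of even order the leading coefficient is forced to be \emph{strictly} positive, which is why both the no-cancellation argument for $q_0$ and the stability of $q + \epsilon \tilde q \ge 0$ go through.
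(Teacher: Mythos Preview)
Your proof is correct and follows essentially the same approach as the paper: the heart of both is the perturbation/compactness argument showing that any $q\in\pos$ with $(\div q)_\RR=2D$ lies in $\relint\face_D$ inside $V_D$, and your local analysis in a uniformizer is exactly what the paper does.

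The one unnecessary detour is your explicit construction of $q_0$. \Cref{lem::rel_int} already guarantees that every $g\in\relint\face$ satisfies $(\div g)_\RR=2D$, so the paper simply takes such a $g$; your perturbation claim then places it in $\relint\face_D$, and $g\in\relint\face\cap\relint\face_D$ gives $\face=\face_D$ immediately via \cite[Cor.~18.1.2]{rockafellarConvexAnalysis1970a}. Bypassing the construction also avoids the small edge case your argument misses: when $D=0$ (i.e.\ $\div(\face)=0$), the index set $\supp D$ is empty, so your sum is the zero form and $q_0=0\notin\relint\face_D$. This is easily patched, but it illustrates why reusing the relative-interior element from \Cref{lem::rel_int} is cleaner than rebuilding it by hand.
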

\begin{proof}
    Let $\div(\face)$ be the face divisor of $\face$. For any $g \in \relint \face$, we have $(\div g)_\RR = 2\div(\face)$ by definition. We now show that if $(\div g)_\RR = 2\div(\face)$ for some $g\in \pos$ then $g$ lies in the relative interior of $\face$.
    
    Consider the vector space $V = V_{\div(\face)} = \{ \, q \in R_2 \colon (\div q) \ge 2\div(\face) \, \}$. Then we have $\face_{\div(\face)} \subset V$ by definition. We now show that any $g\in P$ with $(\div g)_\RR = 2 \div(\face)$ belongs to the interior of $\face_{\div(\face)}$ in $V$, which in particular implies that $V$ is the linear span of $\face_{\div(F)}$.
    Since $(\div g)_\RR = 2\div(\face)$ we have $g \in \face_{\div(\face)}$. 
    We only need to show that for any $q\in V$ there is an $\epsilon>0$ such that $g+\epsilon q \in \pos$ , i.e.,~that $g+\epsilon q \ge 0$ on $X(\RR)$. This follows from compactness of $X(\RR)$. 
    We switch to the affine cone so that $g$ and $q$ are actual functions on $\widehat X(\RR)\cap \SS^n$.
    For every $A \in \supp \div(\face)$ let $x_A \in \widehat X(\RR)\cap \SS^n$ be an affine representative. Since the multiplicity of $g$ at $A$ is smaller or equal to the multiplicity of $q$ at $A$, and since $g$ is nonnegative, there is an open neighborhood $U_A$ of $x_A$ in $\widehat X(\RR)\cap \SS^n$ and an $\epsilon>0$ such that $g+\epsilon q \ge 0$ on $U_A$. Since $g(-x) = g(x)$ and $g$ vanishes only at finitely many points in $X(\RR)$, there exists an $\epsilon>0$ such that $g+\epsilon q \ge 0$ on $U \coloneqq \bigcup_{A \in \supp \div(\face)} (U_A \cup -U_A)$.
    Next, since all the zeroes of $g$ belong to $U$, we conclude that $g$ is strictly positive on the compact set $K\coloneqq(\widehat{X}(\RR)\cap \SS^n) \setminus U$. Since $q$ is bounded on $K$ and $g$ has a strictly positive minimum of $K$ we get $g+\epsilon q >0$ on $K$ for sufficiently small $\epsilon>0$. Overall, we conclude that there is an $\epsilon>0$ such that $g + \epsilon q$ is nonnegative on $X(\RR)$, showing that $g$ is in the relative interior of $\face$.
    
    This also implies that any interior point of $\face$ is an interior point of $\face_{\div(\face)}$, and thus by \cite[Cor. 18.1.2]{rockafellarConvexAnalysis1970a} we have $\face = \face_{\div(\face)}$. Moreover, the linear span of $\face$ is $V$.

    Above, we have shown the inclusion $\relint \face_{\div(\face)} \supset \{ \, q\in \pos \colon (\div q)_{\RR} = 2\div(\face) \, \}$. We now prove the reverse inclusion as well.
    For $f \in \face_{\div(\face)} \setminus \{ \, q\in \pos \colon (\div f)_{\RR} = 2\div(\face) \, \}$ there exists a real point $A \in \supp \div(\face)$ such that $\mult_A(q) > \mult_A(\div(\face))$. Consider then $L \colon V_A \to \RR$ as in the proof of \Cref{lem::rel_int}. By definition we have $L(f) = 0$ and $L(g) > 0$ for all $ g \in \{ \, q\in \pos \colon (\div q)_{\RR} = 2\div(\face) \, \} \supset \relint \face_{\div(\face)}$, concluding the proof.
\end{proof}

\Cref{thm:char_faces} allows us to describe the duality between faces and face divisors precisely, as follows.

\begin{theorem} \label{cor:galois}
    The functions\newline
    \begin{minipage}{0.45\textwidth}
        \begin{align*}
            \Phi \colon \faces\setminus\{\, \{0\}\, \} & \longrightarrow \Div_{\ge 0}(X(\RR)) \\
            \face & \longmapsto \div(\face)
        \end{align*}    
    \end{minipage}
    \begin{minipage}{0.45\textwidth}
        \begin{align*}
            \Psi \colon \Div_{\ge 0}(X(\RR))\supset \Im(\Phi) &        \longrightarrow \faces  \\
            D & \longmapsto \face_D
        \end{align*}    
    \end{minipage}\\
    form a Galois connection between the positive dimensional faces $\faces\setminus\{\, \{0\}\, \}$, ordered by inclusion, and the set of face divisors $\Im(\Phi)\subset \Div_{\ge 0}(X(\RR))$. This means that, for all $F \in \faces$ and $D \in \Im \Phi$ we have
    \[
        \div(\face) \le D \iff \face \supseteq \face_{D}.
    \]
\end{theorem}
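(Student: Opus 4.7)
The plan is to establish the two implications separately, using \Cref{thm:char_faces} as the main tool. The key fixed-point identity $\face = \face_{\div(\face)} = (\Psi \circ \Phi)(\face)$ for every nonzero face allows one to convert face-membership statements into divisor inequalities and back.

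For the forward direction, I would assume $\div(\face) \le D$ and show $\face \supseteq \face_D$ by taking an arbitrary $q \in \face_D$. By the definition of $\face_D$, we have $\div q \ge 2D$, and combined with $2 \div(\face) \le 2D$ this gives $\div q \ge 2 \div(\face)$. Hence $q \in \face_{\div(\face)}$, and by \Cref{thm:char_faces}(i) the latter equals $\face$. This part is essentially a transitivity argument and should not present any real obstacle.

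For the reverse direction, I would assume $\face \supseteq \face_D$ and aim for $\div(\face) \le D$. Since $D \in \Im \Phi$, there exists some face $\face'$ with $\div(\face') = D$; by \Cref{thm:char_faces}(i) this forces $\face' = \face_D$, so $\face_D$ is itself a nonzero face with $\div(\face_D) = D$. Pick $q \in \relint(\face_D)$; by \Cref{thm:char_faces}(iii) we have $(\div q)_\RR = 2D$. From $\face_D \subseteq \face$ and the fixed-point identity $\face = \face_{\div(\face)}$ we deduce $q \in \face_{\div(\face)}$, so $\div q \ge 2 \div(\face)$. The last step is the only point that requires a moment of care: since $\div(\face)$ is totally real, writing $\div q = (\div q)_\RR + (\div q)_\CC$ and noting that the coefficients of $\div(\face)$ outside $X(\RR)$ vanish, the inequality $\div q \ge 2 \div(\face)$ restricts componentwise to $(\div q)_\RR \ge 2 \div(\face)$. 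Substituting $(\div q)_\RR = 2D$ yields $D \ge \div(\face)$.

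The final sentence about $(\Psi \circ \Phi)(\face) = \face$ is already part of \Cref{thm:char_faces}(i) and therefore needs no further argument. I expect the only subtlety to be the restriction-to-real-parts step in the reverse direction, which is immediate once one uses that $\div(\face)$ is supported on $X(\RR)$; everything else is a direct application of \Cref{thm:char_faces}.
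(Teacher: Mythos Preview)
Your proof is correct and follows essentially the same approach as the paper: both directions reduce to the fixed-point identity $\face = \face_{\div(\face)}$ from \Cref{thm:char_faces}, combined with the evident order-reversal of $D \mapsto \face_D$. Your forward direction is in fact slightly more direct than the paper's (which routes through $\face_{\div(\face_D)}$), and your reverse direction simply unpacks at the level of a concrete $q \in \relint \face_D$ the order-reversal of $\Phi$ that the paper invokes in one line; the restriction-to-real-parts step you flag is indeed harmless since $\div(\face)$ is totally real.
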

\begin{proof}
    Let $\face$ and $D$ be such that $\div(\face) \le D$. By definition this implies $\face_{\div(\face)} \supset \face_{\div(\face_D)}$. From \Cref{lem:div_to_face} we deduce that $\face_D$ is a face of $\pos$, while from \Cref{thm:char_faces} $\face_D = \face_{\div(\face_D)}$ and $\face = \face_{\div(\face)}$. Therefore $\face = \face_{\div(\face)} \supset \face_{\div(\face_D)} =\face_D$, as we wanted.

    For the converse implication, let $D = \div(\widetilde \face) \in \Im \Phi$ and $\face \supset \face_{D} = \face_{\div(\widetilde \face)}$. This implies that $\div(\face) \le \div(\face_{\div(\widetilde \face_1)}) = \div(\widetilde \face) = D$, concluding the proof.
\end{proof}

\Cref{thm:char_faces} implies that $\Psi$ is surjective. On the other hand, $\Phi$ is not. 
Characterizing the image of $\Phi$ is difficult, and it is naturally connected to existence of nonnegative quadrics, as we now show.

\begin{lemma}\label{lem:image}
    Let $D$ be a totally real effective divisor. The following are equivalent:
    \begin{enumerate}
        \item $D$ is a face divisor, i.e., $D \in \Im(\Phi)$;
        \item $D = \div(\face_{D})$;
        \item There exists $q \in \pos$ such that $(\div q)_\RR = 2D$.
    \end{enumerate}
\end{lemma}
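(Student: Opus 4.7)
The plan is to close the cycle $(1) \Rightarrow (3) \Rightarrow (2) \Rightarrow (1)$, where the only implication with real content is $(3) \Rightarrow (2)$; the other two directions are immediate consequences of the machinery already established in \Cref{thm:char_faces} and \Cref{lem::rel_int}.

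For $(1) \Rightarrow (3)$, I would take a face $\face \ne \{0\}$ with $\Phi(\face) = D$ and pick any $q$ in its relative interior. By \Cref{thm:char_faces}(3), this $q$ automatically satisfies $(\div q)_\RR = 2\div(\face) = 2D$, exhibiting the required quadric. For $(2) \Rightarrow (1)$, the equation $D = \div(\face_D)$ implicitly requires $\face_D \ne \{0\}$ (otherwise $\div(\face_D)$ is not even defined), and then $D = \Phi(\face_D) \in \Im \Phi$ by definition of $\Phi$.

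The substantive step is $(3) \Rightarrow (2)$. Given a nonzero $q \in \pos$ with $(\div q)_\RR = 2D$, the total reality of $D$ forces $(2D)_\CC = 0$, so $\div q \ge 2D$ and hence $q \in \face_D$; in particular $\face_D \ne \{0\}$. Now I would reuse the explicit description of the face divisor from the proof of \Cref{lem::rel_int}: for the face $\face_D$ one has $\div(\face_D) = \sum_{A \in X(\RR)} \mu_A \, A$ with $\mu_A = \tfrac{1}{2}\min\{\, \mult_A(g) : g \in \face_D \,\}$. The inclusion $\face_D \subset \{\, g : \mult_A(g) \ge 2 D_A \,\}$ immediately gives $\mu_A \ge D_A$, while the quadric $q$ supplied by the hypothesis attains $\mult_A(q) = 2 D_A$ for every $A \in X(\RR)$, forcing $\mu_A = D_A$ at every real point. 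Therefore $\div(\face_D) = D$.

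The hard part is really already resolved in \Cref{thm:char_faces}: the nontrivial fact is that a nonnegative quadric whose real zero pattern matches $2D$ can be perturbed inside the linear span of $\face_D$ while staying nonnegative on $X(\RR)$, which required the compactness-and-local-expansion argument there. Once that machinery is in hand, the present lemma reduces to the short multiplicity comparison above, and no additional genus-one or curve-specific input is needed.
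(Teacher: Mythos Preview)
Your proof is correct and essentially matches the paper's. The paper organizes the implications as $(i)\Leftrightarrow(ii)$ and $(ii)\Leftrightarrow(iii)$ rather than a cycle, but the ingredients are the same; the only small difference is that for $(iii)\Rightarrow(ii)$ the paper invokes the fact (from the proof of \Cref{thm:char_faces}) that $(\div q)_\RR = 2D$ forces $q\in\relint\face_D$, whereas you bypass the relative-interior step by directly comparing the minimum-multiplicity formula $\mu_A = \tfrac12\min_{g\in\face_D}\mult_A(g)$ from \Cref{lem::rel_int} against the witness $q$ --- a marginally more elementary route to the same conclusion.
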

\begin{proof}
    For $(i) \implies (ii)$, we have to prove $\div(\face) = \div(\face_{\div(\face)})$, which follows from \Cref{thm:char_faces}.
    The converse implication $(ii) \implies (i)$ is trivial.
    
    $(ii) \implies (iii)$ follows by taking $q\in \relint \face_D$ because we have $(\div q)_\RR = 2 \div(\face_D) = 2D$ for such a $q$.
    The final implication $(iii) \implies (ii)$ follows from the fact that $(\div q)_\RR = 2D$ implies $q \in \relint \face_{D}$, and thus $D = \div(\face_D)$.
\end{proof}

Studying the set $\Im \Phi$ is challenging. The following example, for instance, shows that $\Im (\Phi)$ is in general not a lower set in $\Div_{\ge 0}(X(\RR))$, i.e.  for $D, D' \in \Div_{\ge 0}(X(\RR))$, $D \in \Im \Phi$ and $D' \le D \centernot\implies D' \in \Im \Phi$.

\begin{example} \label{ex:nolowerset}
Consider the projective closure $X\subset \PP^2$ of the affine plane quartic defined by \[2(1-x_1^2-2x_2^2)(1-2x_1^2-x_2^2) - (4x_1^2-1)(4x_2^2-1)=0\]
Its real locus consists of four disjoint ovals, symmetric with respect to the $x_1$ and $x_2$ axis. Setting $a=\sqrt{1-\sqrt{112}/14}$, we can verify that $X(\RR)$ intersects the quadric $q=a^2-x_1^2$ in the four points $(\pm a, \pm \sqrt{1-3\sqrt{7}/14})$ with multiplicity $2$, and that $q$ is nonnegative on $X(\RR)$. If we denote these four points by $A_1, \dots , A_4$ and we define $D = A_1 + \dots + A_4$, we therefore have that $\div q = (\div q)_\RR = 2D$, and thus $D \in \Im \Phi$.

We now set $D' = A_1+A_2+A_3$. To show that $\Im \Phi$ is not a lower set in the totally real effective divisors, we prove that $D' \notin \Im \Phi$. Equivalently, we want to show that there does not exist $q' \in \pos$ s.t. $(\div q')_\RR = 2D'$.
Let $q' \in R_2$ be a quadric such that $\div q' \ge 2D$. It follows from the Cayley-Bacharach theorem (e.g. \cite[Th.~CB7]{eisenbudCayleyBacharachTheoremsConjectures1996}, applied to the hypersurfaces $X$ and $q=0$ with $\Gamma'=2D'$ and $\Gamma''=2P_4$) that $q'$ vanishes with multiplicity $2$ at $P_4$, which implies that $\div q' \ge (\div q')_{\RR} \ge 2D$. Therefore, every nonnegative quadric vanishing at $P_1, P_2, P_3$ vanishes also at $P_4$, showing that $D' = P_1+P_2+P_3 \notin \Im \Phi$. Thus $\Im \Phi$ is not a lower set.
\end{example}

\subsection{Dimension of faces}\label{sec:dimension_normality}
In this section, we use the Riemann-Roch theorem (see e.g. \cite{shafarevichBasicAlgebraicGeometry2013}) to compute dimensions of faces of the cone of nonnegative quadrics on $X$. We denote $ \mathcal{L}(D) = \left\{ \, f\in \CC(X) \mid D + \div(f) \ge 0 \, \right\} $ the Riemann-Roch spaces, and
we write $\ell(D)$ for $\dim_\CC(\mathcal{L}(D))$. The Riemann-Roch Theorem states $\ell(D) = \deg(D) + 1 - g + \ell(K-D)$, where $g$ is the genus of $X$ and $K$ is a \emph{canonical divisor} of $X$.
For this computation to be reflected in the chosen embedding $X\subset \PP^n$ of our curve, we assume that the curve is \emph{projectively normal}, which means that the linear systems $|dH|$ for any $d\in \NN$ are complete, where $H$ is a hyperplane in $\PP^n$. Concretely this means that, for any divisor $D\in \Div(X)$ that is linearly equivalent to the zero divisor $\div g$ of a form $g \in \CC[X]_d$, there exists a form $h\in \CC[X]_d$ with $D = \div h$.

We can now investigate, in the projectively normal case, the dimension of faces $\face \in \faces$. 

\begin{lemma}\label{lem:dimension}\label{lem:separating_quadric}
    Let $X\subset \PP^n$ be a totally real curve, and let $\{ 0 \}\neq\face \subset \pos$ be a nonzero face.
    If $X\subset \PP^n$ is projectively normal and $q \in \relint \face$, then $\dim \face = \ell((\div q)_{\CC})$.
\end{lemma}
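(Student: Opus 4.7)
The plan is to translate the dimension computation into one about a Riemann-Roch space via the rational function $q'/q$, and then use projective normality to identify the relevant spaces precisely.

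First, I would invoke \Cref{thm:char_faces}: since $q \in \relint \face$, the linear span of $\face$ is $V_{\div(\face)} = \{\, q' \in R_2 \colon \div q' \ge 2\div(\face) \, \}$, so $\dim \face = \dim_\RR V_{\div(\face)}$. Also by \Cref{def:face_divisor} we have $2\div(\face) = (\div q)_\RR$, so equivalently $V_{\div(\face)} = \{\, q' \in R_2 \colon \div q' \ge (\div q)_\RR \, \}$.

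Next, I would set up the division map $q' \mapsto q'/q$. For any nonzero $q' \in \CC[X]_2$ we have $\div(q'/q) = \div q' - \div q \ge -\div q$, so the map lands in $\mathcal{L}(\div q)$. Because $X \subset \PP^n$ is projectively normal, the restriction $\CC[\vb x]_2 \twoheadrightarrow \CC[X]_2 = H^0(X, \mathcal{O}_X(2))$ is surjective and the complete linear system $|2H|$ is realized by $\CC[X]_2$; hence the map $q' \mapsto q'/q$ is a $\CC$-linear isomorphism $\CC[X]_2 \xrightarrow{\sim} \mathcal{L}(\div q)$ (the inverse sends $f \in \mathcal{L}(\div q)$ to $f \cdot q$, which is regular because $\div(f \cdot q) = \div f + \div q \ge 0$ and equals $\div h$ for a unique $h \in \CC[X]_2$ by projective normality). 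Under this isomorphism, the condition $\div q' \ge (\div q)_\RR$ translates into $\div(q'/q) \ge -(\div q)_\CC$, i.e.\ $q'/q \in \mathcal{L}((\div q)_\CC)$. Thus the complexification $V_{\div(\face)} \otimes_\RR \CC = \{\, q' \in \CC[X]_2 \colon \div q' \ge (\div q)_\RR \, \}$ is $\CC$-linearly isomorphic to $\mathcal{L}((\div q)_\CC)$.

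Finally, I would handle the real-versus-complex dimensions. The divisor $(\div q)_\RR$ is totally real and in particular $\sigma$-invariant, so the subspace $\{\, q' \in \CC[X]_2 \colon \div q' \ge (\div q)_\RR \, \}$ is defined over $\RR$, meaning it is the complexification of $V_{\div(\face)}$. Consequently
\[
\dim_\RR V_{\div(\face)} = \dim_\CC \bigl( V_{\div(\face)} \otimes_\RR \CC \bigr) = \dim_\CC \mathcal{L}((\div q)_\CC) = \ell((\div q)_\CC),
\]
which combined with $\dim \face = \dim_\RR V_{\div(\face)}$ gives the claim. The main subtle point is ensuring that projective normality is used precisely enough to guarantee the bijection $\CC[X]_2 \cong \mathcal{L}(\div q)$ (without it one would only get an inclusion into $\mathcal{L}(\div q)$), and that the real-versus-complex bookkeeping is done correctly; beyond these technical points, the argument is a direct transcription of the vanishing conditions from quadrics to the associated Riemann-Roch space.
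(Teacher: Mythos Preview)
Your proof is correct and follows essentially the same approach as the paper: both arguments use the division map $f \mapsto f/q$ to identify $V_{\div(\face)}$ with $\mathcal{L}((\div q)_{\CC})$, with projective normality supplying surjectivity. Your version is a bit more explicit about the real-versus-complex bookkeeping (complexifying $V_{\div(\face)}$ and noting $\sigma$-invariance), which the paper leaves implicit, but the substance is the same.
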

\begin{proof}
    Since $D = \div(\face) \in \Im \Phi$ we have $D = (\div q)_{\RR}$ for any $q \in \relint \face$ by \Cref{thm:char_faces}. As in \Cref{thm:char_faces} we write $V_D = \{ \, p \in R_2 \colon \div p \ge 2D \, \}$. For any $q \in \relint \face$ consider the map:
    \[
        V_D \longrightarrow \mathcal{L}((\div q)_{\CC}), \quad f \longmapsto f/q.
    \]
    This is clearly well-defined and injective, and it is surjective since $X \subset \PP^n$ is projectively normal. Therefore $\dim V_D = \ell((\div q)_{\CC})$. By \Cref{thm:char_faces} we have $\dim \face = \ell((\div q)_{\CC})$.
\end{proof}

Thanks to \Cref{lem:dimension} and \Cref{thm:char_faces}, when we consider a totally real projectively normal curve $X \subset \PP^n$, we only need to consider nonnegative quadrics with a large number of real zeroes, in order to understand the dimension of all the small dimensional faces $\face \subset \pos$. We make this statement precise in the following proposition for the extreme rays, i.e. for faces of dimension one.

\begin{proposition}\label{prop:max_zeroes}
    Let $X\subset \PP^n$ be a totally real projectively normal curve of genus $g$. 
    If $\RR_{\ge 0} \cdot q$ is an extreme ray of $\pos$, then $\deg ((\div q)_{\RR}) \ge 2\deg X -g$. In particular, if all the real zeroes of $q$ have multiplicity two, then $q$ has at least $\deg X - g/2$ real zeroes.
\end{proposition}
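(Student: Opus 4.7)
The plan is to combine the dimension formula from \Cref{lem:dimension} with the Riemann--Roch theorem. Since $\RR_{\ge 0}\cdot q$ is an extreme ray, the face has dimension $1$, and $q$ itself lies in the relative interior of this face. Therefore \Cref{lem:dimension} gives
\[
    1 = \dim(\RR_{\ge 0}\cdot q) = \ell\bigl((\div q)_{\CC}\bigr).
\]

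Next, I would apply Riemann--Roch to the effective divisor $(\div q)_{\CC}$. Writing $K$ for a canonical divisor on $X$, Riemann--Roch gives
\[
    \ell\bigl((\div q)_{\CC}\bigr) = \deg\bigl((\div q)_{\CC}\bigr) + 1 - g + \ell\bigl(K - (\div q)_{\CC}\bigr).
\]
Using $\ell\bigl(K - (\div q)_{\CC}\bigr) \ge 0$ and the equality above, this yields
\[
    1 \ge \deg\bigl((\div q)_{\CC}\bigr) + 1 - g, \qquad \text{hence} \qquad \deg\bigl((\div q)_{\CC}\bigr) \le g.
\]

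Since $q \in R_2$ is a quadric on $X \subset \PP^n$, we have $\deg(\div q) = 2\deg X$, and therefore
\[
    \deg\bigl((\div q)_{\RR}\bigr) = \deg(\div q) - \deg\bigl((\div q)_{\CC}\bigr) \ge 2\deg X - g,
\]
establishing the first claim. For the final sentence, recall that $(\div q)_{\RR} = 2D$ for some totally real effective divisor $D$; if in addition every real zero of $q$ has multiplicity exactly two, then $\deg D$ equals the number of real zeros of $q$, and $\deg\bigl((\div q)_{\RR}\bigr) = 2\deg D$, so dividing the inequality by $2$ yields the bound $\deg X - g/2$.

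The argument is largely mechanical once \Cref{lem:dimension} is in place; the only mild subtlety is verifying that $q$ does lie in the relative interior of its spanning extreme ray (immediate since $\relint(\RR_{\ge 0}\cdot q) = \RR_{> 0}\cdot q$), and noting that projective normality is what allows us to invoke \Cref{lem:dimension} to translate the extremality condition into the vanishing statement $\ell((\div q)_{\CC}) = 1$ on the Riemann--Roch side.
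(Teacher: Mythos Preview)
Your proof is correct and follows essentially the same route as the paper: use \Cref{lem:dimension} to get $\ell((\div q)_{\CC})=1$, then apply Riemann's inequality (which you spell out via Riemann--Roch plus $\ell(K-(\div q)_{\CC})\ge 0$) to bound $\deg((\div q)_{\CC})\le g$, and conclude using $\deg(\div q)=2\deg X$.
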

\begin{proof}
    If $\RR_{\ge 0} \cdot q$ is an extreme ray of $\pos$, then combining \Cref{lem:dimension} and Riemann's inequality we have:
    \[
        1 = \ell((\div q)_{\CC}) \ge \deg((\div q)_{\CC}) - g + 1
    \]
    This implies that:
    \[
        2 \deg X - \deg ((\div q)_{\RR}) = \deg ((\div q)_{\CC}) \le g 
    \]
    concluding the proof.
\end{proof}

\section{Extreme rays of the nonnegative cone}
\label{sec:extreme_rays}

We study the \emph{extreme rays} of the cone $P$ of nonnegative quadrics, which we denote by $\cE(P)$. 
The complete description of the extreme rays of $\pos$ is difficult for curves of arbitrary genus. In the following, we will mostly focus on real rooted extreme rays, i.e. those that they are defined by nonnegative quadrics with only real zeroes.  We use
\[
    \cE^{\mathrm{rr}}(\pos) \coloneqq \{ \, \RR_{\ge 0} \cdot q \in \cE(\pos) \mid \div q = (\div q)_{\RR} \, \}
\]
to denote the set of such extreme rays. \Cref{ex:extreme_no_max_zeroes} below shows that, in general, $\cE^{\mathrm{rr}}(\pos) \subsetneq \cE(\pos)$. However, for genus one curves, \Cref{prop:max_zeroes} implies that the extreme rays are all real rooted. We study them further in \Cref{sec:genus_one,sec:cara,sec:dual_plane_cubics}. 

\begin{example}\label{ex:extreme_no_max_zeroes}
    Let $X(\RR) \subset \PP^2(\RR)$ be the plane sextic defined by $h=x_1^6+x_2^6-x_0^6=0$ and let $q = x_1^2+x_2^2 - x_0^2\in \RR[x_0, x_1, x_2]_2 \cong \RR[X]_2$. The curve $X$ is a smooth plane curve of degree $6$ and hence has genus $g=10$ from the genus-degree formula. \Cref{prop:max_zeroes} implies that any nonnegative quadric defining an extreme ray of $\pos$ has between $2 \deg X - g = 2$ and $12$ real zeroes, counted with multiplicity.
    
    The curves $h$ and $q$ intersect with multiplicity two in the real points $A_1=(1:0:1)$, $A_2=(1:0:-1)$, $A_3=(1:1:0)$ and $A_4=(1:-1:0)$, and with multiplicity two in the complex conjugate points $A_5=(0:1:i)$ and $A_6=\sigma(A_5)=(0:1:-i)$. In other words, $(\div q)_\RR = 2(A_1+\dots + A_4)$ and $(\div q)_\CC = 2(A_5+\sigma(A_5))$.
    
    The curve $X(\RR)$ is connected, and $q$ is nonnegative on $X(\RR)$.
    By \Cref{prop:max_zeroes}, to show that $\RR_{\ge 0} \cdot q$ is an extreme ray of $\pos$ it is sufficient to verify that $\ell((\div q)_\CC) = \ell(2(A_5+\sigma(A_5))=1$. This can be done using the Cayley-Bacharach theorem, see e.g. \cite[Th.~ CB7]{eisenbudCayleyBacharachTheoremsConjectures1996} or using any computer algebra system.  Alternatively, one can directly verify that the double vanishing at $A_1, \dots , A_4$ imposes enough conditions to obtain a one-dimensional family of solutions.

    Notice in particular that \[ 2\deg X - g = 2 < \deg(\div q)_\RR = 8<2\deg X = 12\]
    showing that $\RR_{\ge 0} \cdot q$ is an extreme ray defined by a quadric with a nonmaximal number of real zeroes, and not attaining the lower bound of the inequality in \Cref{prop:max_zeroes}.
\end{example}

\subsection{Extreme rays and the Jacobian}
\label{sec:double}
We propose a general framework to study $\cE(\pos)$, based on the properties of the Jacobian of a real curve $X$, for which we refer the reader to \cite{grossRealAlgebraicCurves1981a, vinnikovSelfadjointDeterminantalRepresentations1993,baldi2025totallyrealdivisorscurves}.

Let $(X, \sigma)$ be a smooth irreducible real curve of genus $g$. For every $f \in \CC(X)\setminus \{\, 0 \, \}$, we can consider the divisor $\div f$ of poles and zeroes. Such divisors form a normal subgroup of $\Div(X)$, and the quotient $\Cl{(X)}$ is called the \emph{divisor class group}. 
So two divisors $D$ and $D'$ represent the same class $[D] = [D'] \in \Cl{(X)}$ if and only if there exists $f \in \CC(X)\setminus\{0\}$ such that $D = D' + \div f$.
We denote $\Cl^d(X)$ the subgroup of divisor classes of degree $d$. $\Cl^0{(X)}$ is isomorphic to the Jacobian variety $\jac = \jac(X) \cong \CC^g/\Lambda$, which is a (complex) $g$-torus. There is a natural action of $\sigma$ on $\jac$,
which gives $\jac$ the structure of $\RR$-variety.
As usual, we denote by $\jac (\RR)$ the real part of $J$, i.e. the fixed points of $\sigma$. Concretely, a divisor class $[D]$ is in $\Cl^0(X)(\RR) \cong \jac(\RR)$ if the divisor $\sigma(D)$ is linearly equivalent to $D$, which implies $[\sigma(D)] = [D]$. 

We define a map $\cE(\pos) \to \jac(\RR)$. Since $X$ is smooth and totally real, we have $X(\RR) = S_1 \sqcup \dots \sqcup S_r$, where $S_i$ is diffeomorphic to a circle for all $i$, and $0<r\le g+1$ by Harnack's inequality. Furthermore, there exist smooth, simple loops
$R_1, \dots , R_m\subset X$ such that deleting these loops and the real locus $S_1, \dots, S_r$ from the Riemann surface $X(\CC)$ disconnects it: $X(\CC) \setminus (\bigcup_i S_i \cup \bigcup_j R_j) = Y_1 \sqcup Y_2$ is disconnected, with $\sigma(R_j) = R_j$, $\sigma(Y_1) = Y_2$ and $\sigma(Y_2) = Y_1$. We refer the reader to \cite[Sec.~2]{baldi2025totallyrealdivisorscurves} for more details. The action of $\sigma$ on $R_j$ has no fixed points, and we can write $R_j = T_{j,1} \sqcup T_{j,2}$, with $\sigma(T_{j,1}) = T_{j,2}$, $\sigma(T_{j,2}) = T_{j,1}$ and each $T_{j,k}$ is diffeomorphic to $[0,1)$. These choices allow us to split divisors $D$ supported outside $X(\RR)$ into two parts. If $\supp(D) \cap X(\RR) = \emptyset$, write $\widetilde{D}$ for the unique divisor supported on $Y_1 \cup T_{1,1} \cup \dots \cup T_{m,1}$ such that $D = \widetilde{D} + \sigma(\widetilde{D})$. If $H$ denotes the hyperplane divisor, we can then define the map
\begin{align*}
    \mathfrak{D} \colon \cE(\pos) & \longrightarrow \jac \\
    \RR_{\ge} \cdot q & \longmapsto [\frac{1}{2} (\div(q))_{\RR} + \widetilde{(\div q)_{\CC}} - H] \eqqcolon \mathfrak{D}(q)
\end{align*}
Notice that the map is well-defined, since the degree of the divisor $\frac{1}{2} (\div(q))_{\RR} + \widetilde{(\div q)_{\CC}} - H$ is zero, and that $\mathfrak{D}(q) + \sigma(\mathfrak{D}(q)) = 0$ by definition. The operation $\widetilde{\cdot}$, and hence $\mathfrak{D}$, depend on the (non-unique) choice of $R_1, \dots , R_m$. A related construction, independent of such a choice but not refined enough for our purposes, has been studied in \cite[Sec.~2]{scheidererSumsSquaresRegular2000}. Notice that, if $\RR_{\ge 0} \cdot q \in \cE^\rr(\pos)$, then $\double(q) =  [\frac{1}{2} (\div(q))_{\RR} - H]$.

\subsection{Real rooted extreme rays} 
\label{sec:real_rooted}
A full understanding of $\cE(\pos)$ will require studying the image $\mathfrak{D}(\cE(\pos))$ in detail. We begin by investigating $\mathfrak{D}(\cE^{\mathrm{rr}}(\pos))$. For this, we are interested in the $2$-torsion points of $\jac(\RR)$.
\begin{definition}\label{def:two_torsion}
    We denote $\jac_2 = \{ \, \alpha \in \jac \colon 2\alpha =0 \, \}$ the subgroup of $\jac$ consisting of $2$-torsion points, and we denote the \emph{real $2$-torsion points} by $\jac(\RR)_2 \coloneqq \jac_2 \cap \jac(\RR)$.
\end{definition}

It is known that the number of real $2$-torsion points depends only on the genus $g$ of $X$ and the number of connected components $r$ of $X(\RR)$.
\begin{lemma}
    [{\cite[§~5]{grossRealAlgebraicCurves1981a}}] \label{lem:torsion_genus_components}
    If $(X,\sigma)$ is a totally real curve of genus $g$ such that $X(\RR)$ has $r>0$ connected components, then $\jac(\RR)_2 \cong (\ZZ/2\ZZ)^{g+r-1}$.
\end{lemma}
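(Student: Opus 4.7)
The plan is to identify the structure of $\jac(\RR)$ as a compact real Lie group and then count the $2$-torsion directly. First, I would recall that $\jac(X)$ is a $g$-dimensional complex abelian variety, with a real structure induced from $\sigma$. Thus $\jac(\RR)$ is a compact real abelian Lie group, and its identity component $\jac(\RR)^{\circ}$ is necessarily a real torus of dimension $g$, i.e., diffeomorphic to $(S^1)^g$. In particular, $\jac(\RR)^\circ$ contributes exactly $2^g$ elements of order dividing $2$.

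Next, the main step is to compute the component group $\pi_0(\jac(\RR)) = \jac(\RR)/\jac(\RR)^{\circ}$. Write $X(\RR) = C_1 \sqcup \dots \sqcup C_r$ for the decomposition into connected components. The key idea is to use the Abel–Jacobi map: a real divisor class $[D]\in \jac(\RR)$ can be represented by a real divisor, and after replacing $D$ by a linearly equivalent divisor with support away from $X(\RR)$, the intersection numbers $(D\cdot C_i) \bmod 2$ are well defined. This yields a group homomorphism
\[
\Theta \colon \jac(\RR) \longrightarrow (\ZZ/2\ZZ)^{r}, \qquad [D] \longmapsto \bigl((D\cdot C_1),\dots,(D\cdot C_r)\bigr)\bmod 2,
\]
whose image lies in the hyperplane $\sum_i x_i = 0$ (since $\deg D = 0$), and whose kernel is exactly $\jac(\RR)^\circ$ (this is the classical theorem of Weichold/Comessatti, relying on the fact that a real divisor of degree zero with even intersection with every real component is boundary of a real $1$-chain, hence vanishes in $H_1(X(\RR);\ZZ/2)$). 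I would deduce the exact sequence
\[
0 \longrightarrow \jac(\RR)^{\circ} \longrightarrow \jac(\RR) \xrightarrow{\ \Theta\ } (\ZZ/2\ZZ)^{r-1} \longrightarrow 0,
\]
which already identifies $\pi_0(\jac(\RR)) \cong (\ZZ/2\ZZ)^{r-1}$.

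To finish, I would argue that this sequence splits as a sequence of abstract abelian Lie groups, so $\jac(\RR) \cong (S^1)^g \times (\ZZ/2\ZZ)^{r-1}$. The $2$-torsion of the right-hand side is $(\ZZ/2\ZZ)^g\times (\ZZ/2\ZZ)^{r-1} \cong (\ZZ/2\ZZ)^{g+r-1}$, giving the claim. Splitting is automatic at the level of $2$-torsion even without a global splitting: since $\jac(\RR)^\circ$ is divisible, every $2$-torsion class in the quotient lifts to a $2$-torsion class in $\jac(\RR)$, so one obtains a short exact sequence $0 \to \jac(\RR)^\circ_2 \to \jac(\RR)_2 \to (\ZZ/2\ZZ)^{r-1}\to 0$ of elementary abelian $2$-groups, which splits and yields the desired order $2^{g+r-1}$.

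The main obstacle is the identification $\ker \Theta = \jac(\RR)^\circ$, i.e., that a real divisor of degree zero with even intersection number against every real component is in the connected component of the identity of $\jac(\RR)$. This is the topological heart of the argument and requires understanding the interplay between $X(\RR)$ and the real structure on the period lattice of $\jac(X)$, which is precisely what Comessatti's classical work establishes; given the depth of this ingredient, in practice I would simply cite \cite{grossRealAlgebraicCurves1981a} as the authors do.
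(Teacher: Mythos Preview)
The paper gives no proof of this lemma at all; it simply records the statement with a citation to Gross--Harris. Your sketch is precisely the classical argument one finds there (going back to Weichold and Comessatti): identify $\jac(\RR)^\circ$ as a real $g$-torus, compute $\pi_0(\jac(\RR))\cong(\ZZ/2\ZZ)^{r-1}$, and then use divisibility of the identity component to pass to $2$-torsion. In that sense your proposal is correct and there is nothing in the paper to compare it against.

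One small point of exposition: your description of the map $\Theta$ is garbled. Replacing $D$ by a linearly equivalent divisor \emph{with support away from} $X(\RR)$ would make all the ``intersection numbers'' $D\cdot C_i$ zero, which is not what you want. The correct invariant is simply the parity of $\deg(D|_{C_i})$ for a real divisor $D$ representing the class; well-definedness on $\Cl^0(X)(\RR)$ follows because any real rational function has an even number of zeros-minus-poles on each real circle $C_i$ (it returns to the same sign after traversing the loop). With that fix, your argument goes through, and the divisibility trick you give for lifting $2$-torsion from the component group is clean and correct.
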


Among all real $2$-torsion points, we are interested in those defined by nonnegative rational functions.
\begin{lemma}\label{lem:positive_torsion}
    Let $\alpha \in \jac(\RR)_2$ and let $D, D'$ be divisors representing $\alpha$. If $f, f' \in \RR(X)$ are such that $\div f =2D$ and $\div f' = 2D'$, then there exists $g \in \RR(X)$ such that $f' = f g^2$. In particular, if $f \ge 0$ on $X(\RR)$ (wherever it is defined), then $f' \ge 0$ on $X(\RR)$ (wherever it is defined).
\end{lemma}
\begin{proof}
    Let $D, D',f, f'$ and $\alpha$ be as in the statement. As $[D] = \alpha = [D']$, there exists $g\in \RR(X)$ such that $D+\div g = D'$.
    Hence $\div f' = 2D' = 2(D + \div g) = \div f g^2$. Therefore, up to a multiplicative constant, $f' = f g^2$.
\end{proof}
\begin{definition}\label{def:positive_torsion}
    We say that $\alpha \in \jac(\RR)_2$ is \emph{positive} if, for any divisor $D$ representing $\alpha$, there exist $f\in \RR(X)$ such that $f \ge 0$ on $X(\RR)$ (wherever it is defined) and $\div f = 2D$ (see \Cref{lem:positive_torsion}). We denote the subgroup of positive $2$-torsion points by $\jac(\RR)_2^+$.
\end{definition}

{We show that} the number of positive $2$-torsion points $\jac(\RR)_2^+$ depends only on the genus of $X$, and not on the topology of $X(\RR)$: {see {\cite[§~5]{geyerUeberlagerungenBerandeterKleinscher1977}} for a related discussion.}

\begin{lemma}\label{lem:positive_torsion_genus}
    If $(X,\sigma)$ is a totally real curve of genus $g$, then $\jac(\RR)_2^+ \cong (\ZZ/2\ZZ)^{g}$.
\end{lemma}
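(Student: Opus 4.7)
The plan is to exhibit a group homomorphism
\[ s\colon \jac(\RR)_2 \longrightarrow (\ZZ/2\ZZ)^{r-1}, \]
where $r$ denotes the number of connected components of $X(\RR)$, whose kernel is exactly $\jac(\RR)_2^+$ and which is surjective. Combined with \Cref{lem:torsion_genus_components}, giving $\lvert \jac(\RR)_2\rvert = 2^{g+r-1}$, the first isomorphism theorem then forces $\lvert \jac(\RR)_2^+ \rvert = 2^g$. Since every subgroup of an elementary abelian $2$-group is itself elementary abelian, this yields $\jac(\RR)_2^+ \cong (\ZZ/2\ZZ)^g$.

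To build $s$, I would pick $\alpha \in \jac(\RR)_2$ and choose a real divisor $D$ with $[D] = \alpha$. Because $2\alpha = 0$, there exists $h \in \CC(X)^*$ with $\div h = 2D$. Since $D$ is real, $\div(h/\overline{h}) = 0$, so $h/\overline{h}$ is a nonzero complex constant, and a suitable scaling brings $h$ into $\RR(X)^*$. All orders of vanishing of $h$ are even, so on each connected component $C_i$ of $X(\RR)$ the function $h$ has a well-defined constant sign $\epsilon_i(h) \in \{\pm 1\}$. Replacing $D$ by $D + \div g$ with $g \in \RR(X)^*$ replaces $h$ by $hg^2$, preserving every $\epsilon_i$; rescaling $h$ by $c \in \RR^*$ flips all $\epsilon_i$ simultaneously exactly when $c < 0$. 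Therefore the sign vector $(\epsilon_1(h),\ldots,\epsilon_r(h))$ depends only on $\alpha$ modulo the diagonal $\pm(1,\ldots,1)$, and defines a homomorphism $s\colon \jac(\RR)_2 \to \{\pm 1\}^r/\{\pm(1,\ldots,1)\} \cong (\ZZ/2\ZZ)^{r-1}$. Multiplicativity is immediate from the fact that $h_1 h_2$ represents $\alpha_1 + \alpha_2$, and \Cref{lem:positive_torsion} together with \Cref{def:positive_torsion} show that $\alpha \in \ker s$ if and only if $\alpha \in \jac(\RR)_2^+$.

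The heart of the argument, and where I expect the difficulty to lie, is the surjectivity of $s$. My approach is to identify $s$ with the natural projection $\jac(\RR)_2 \to \pi_0(\jac(\RR))$: by the Comessatti structure theorem $\jac(\RR) \cong (\RR/\ZZ)^g \times (\ZZ/2\ZZ)^{r-1}$, the identity component $\jac(\RR)^0$ is a divisible real torus, and a small diagram chase (given $\beta$ in a non-identity component, $2\beta \in \jac(\RR)^0 = 2\jac(\RR)^0$ is twice some $\gamma \in \jac(\RR)^0$ by divisibility, so $\beta - \gamma \in \jac(\RR)_2$ projects onto the class of $\beta$) shows that the projection from $\jac(\RR)_2$ onto $\pi_0(\jac(\RR))$ is surjective. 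What remains is to match this projection with $s$, which comes down to proving that the parity of $\deg(D|_{C_i})$ controls both the connected component of $[D]$ in $\jac(\RR)$ and the sign $\epsilon_i(h)$: the latter via a topological degree computation on the real meromorphic map $h|_{C_i}\colon C_i \to \PP^1(\RR)$, using that even-order zeros/poles contribute trivially to the degree while the parity of the number of sign changes of $h$ on the circle $C_i$ encodes the parity of the relevant component degree. This last step, relating the analytic data (signs of $h$ on each $C_i$) to the topological/combinatorial data (parities of the component degrees of $D$), is where the technical subtleties concentrate and where careful orientation bookkeeping on the $C_i$ will be required.
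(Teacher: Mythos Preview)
Your overall architecture---define the sign homomorphism $s$, identify its kernel with $\jac(\RR)_2^+$, establish surjectivity, and combine with \Cref{lem:torsion_genus_components}---is exactly the paper's proof. The paper simply cites Geyer for the surjectivity step, whereas you attempt to supply an argument.

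However, your surjectivity argument contains a genuine error. You propose to identify $s$ with the projection $\jac(\RR)_2 \to \pi_0(\jac(\RR))$, via the claim that both maps are computed by the parities $(\deg(D|_{C_i}) \bmod 2)_i$. This identification is false: the two homomorphisms have \emph{different kernels}. Take a real elliptic curve in Weierstrass form with $a_1<a_2<a_3$, so $X(\RR)$ has two components, the oval $\{a_1\le x\le a_2\}$ and the unbounded branch $\{x\ge a_3\}\cup\{O\}$. For $D=T_3-O$ (both points on the unbounded branch) all component-degree parities vanish, so $[D]$ lies in the identity component $\jac(\RR)^0$. Yet $h=x-a_3$ satisfies $\div h=2D$, is nonnegative on the unbounded branch, and is strictly negative on the oval; hence $s([D])$ is nontrivial and $[D]\notin\jac(\RR)_2^+$. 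Conversely $[T_1-O]\in\jac(\RR)_2^+$ (since $x-a_1\ge 0$ on all of $X(\RR)$) but $T_1$ lies on the oval, so $[T_1-O]\notin\jac(\RR)^0$. Thus $\jac(\RR)_2^+\neq\jac(\RR)^0\cap\jac(\RR)_2$, and in particular the sign $\epsilon_i(h)$ is \emph{not} governed by the parity of $\deg(D|_{C_i})$.

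So the surjectivity of $s$ is a genuinely different statement from the (easy) surjectivity of the $\pi_0$-projection, and your divisibility argument establishes only the latter. Proving the former---which is the content of Geyer's result---requires constructing real rational functions with square divisor realizing each prescribed sign pattern on the components, typically via harmonic/topological arguments on the associated Klein surface or via the real theta divisor, not via the component-degree map.
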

\begin{proof}
    Write $X(\RR) = Y_1 \sqcup \dots \sqcup Y_{r}$ as the disjoint union of its connected components. For all $\alpha = [D] \in \jac(X)_2$, we can choose $f_\alpha \in \RR(X)$ such that $\div f = 2D$ and $f \ge 0$ on $Y_1$. The sign of any such $f_\alpha$ on $Y_2, \dots , Y_{r}$ only depends on $\alpha$ because for any two rational functions $f$ and $f'$ such that $\div(f) = 2D$ and $\div(f') = 2D'$ and $[D] = [D'] = \alpha \in \jac(X)$ differ by a square, see the proof of \Cref{lem:positive_torsion}. We can therefore define the map $\jac(X)_2 \to \{ \, \pm 1 \, \}^{r-1}$, which associates to every $\alpha$ the tuple of signs of $f_\alpha$ on $Y_2, \dots, Y_r$. This is a well-defined group morphism, whose kernel is equal to $\jac(\RR)_2^+$. The morphism is surjective (see \cite[§~5]{geyerUeberlagerungenBerandeterKleinscher1977} or \cite{kummer2026nonnegativepolynomialsgeneralizedellipticv3}), and thus \Cref{lem:torsion_genus_components} implies $\jac(\RR)_2^+ \cong (\ZZ/2\ZZ)^{g}$.
\end{proof}

We are now ready to show how positive $2$-torsion points can be used to characterize totally real divisors that arise as intersections of nonnegative quadrics on $X\subset \PP^n$ with the maximal number of real zeroes.

\begin{proposition}\label{prop:2torsion}
    Let $X \subset \PP^n$ be a totally real smooth irreducible projectively normal curve. Let $[H]$ be the hyperplane divisor class, and let $D \in \Div_{\ge 0}(X(\RR))$ be an effective totally real divisor such that $\deg D = \deg X$ and $[2D] = [2{H}]$. Then the following are equivalent.
    \begin{enumerate}
        \item There exists a quadric $q\in R_2$ such that $2D = (\div q) = (\div q)_\RR$.
        \item $[D-{H}] \in \jac(\RR)$ is a $2$-torsion point.
    \end{enumerate}
    Moreover, $q$ is nonnegative if and only if $[D-{H}] \in \jac(\RR)_2^+$.
\end{proposition}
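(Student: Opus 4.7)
The approach is to use projective normality to freely translate between quadrics $q \in R_2$ and effective divisors equivalent to $2E$: by projective normality, every effective divisor $D'$ with $D' \sim 2E$ is realized as $\div q$ for some quadric $q \in \CC[X]_2$, and this construction will be made real whenever $D'$ is.

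For (i) $\Rightarrow$ (ii) I will note that every $q \in R_2$ has $\div q \sim 2E$, so $\div q = 2D$ forces $2(D-E) \sim 0$, making $[D-E]$ a $2$-torsion class. Since $D$ is totally real and $E$ may be taken to be cut out by a real hyperplane, $[D-E]$ lies in $\jac(\RR)$. For the converse, the hypothesis $[2D] = [2E]$ combined with projective normality produces a (possibly complex) quadric $q_0$ with $\div q_0 = 2D$. Since $D = \sigma(D)$, the divisor of $\sigma(q_0)$ equals that of $q_0$, so $\sigma(q_0) = \lambda\, q_0$ for a scalar $\lambda$ of modulus $1$. Rescaling $q_0$ by a square root of $\lambda$ then yields a $\sigma$-invariant, hence real, quadric in $R_2$ with the same divisor $2D = (\div q)_{\RR}$.

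For the \emph{moreover}, I will fix a real linear form $\ell \in R_1$ and set $E' = \div \ell \sim E$. If $q \in \pos$ satisfies $\div q = 2D$, the ratio $f := q/\ell^2 \in \RR(X)$ has $\div f = 2(D-E')$ and is a quotient of two nonnegative forms, hence $f \ge 0$ on $X(\RR) \cap \dom f$; by the definition of positive $2$-torsion this certifies $[D-E'] = [D-E] \in \jac(\RR)_2^+$. Conversely, if $[D-E] \in \jac(\RR)_2^+$, there exists $f \in \RR(X)$ with $f \ge 0$ on $X(\RR) \cap \dom f$ and $\div f = 2(D-E')$; setting $q := f\,\ell^2$ gives $\div q = 2D \ge 0$, so $q$ is regular, and projective normality places the real function $q$ in $R_2$. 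Nonnegativity of $f$ and $\ell^2$ on $X(\RR)$ then yields $q \in \pos$.

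The main technical obstacle is the rescaling step that converts the complex quadric $q_0$ into a real one in the converse of (i)$\Leftrightarrow$(ii); this relies on the fact that two quadrics with the same maximal-degree divisor are proportional, which is ensured here since $\deg(2D) = 2\deg X$ exhausts the degree of $\div q$ for any $q\in R_2$. Once that is handled, the remaining steps are divisor bookkeeping on the real Jacobian, together with the dictionary between quadrics and rational functions afforded by projective normality.
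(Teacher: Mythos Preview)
Your proof is correct and follows essentially the same approach as the paper: both arguments use the correspondence $q \leftrightarrow q/\ell^2$ between quadrics and rational functions to pass to the Jacobian, and invoke projective normality to realize $2D$ as the divisor of a quadric. The only difference is cosmetic: in the direction (ii)$\Rightarrow$(i) the paper asserts directly the existence of a \emph{real} rational function $f\in\RR(X)$ with $\div f = 2(D-E)$ and sets $q=f\ell^2$, whereas you first produce a complex quadric $q_0$ via projective normality and then descend to $\RR$ by the explicit rescaling $\sigma(q_0)=\lambda q_0$, $|\lambda|=1$. Both routes are standard and equivalent; yours makes the reality step explicit while the paper absorbs it into the claim $f\in\RR(X)$.
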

\begin{proof}
    For $(i) \implies (ii)$, consider the rational function $q/\ell^2$, where $\ell$ is {a} linear form. Then $\div (q/\ell^2) = 2D-2{\div \ell}$.
    If $q \in \pos$, then $q/\ell^2$ is nonnegative on $X(\RR)$ and thus $[D-{H}] {= [D - \div \ell]}\in \jac(\RR)_2^+$.
        
    For $(ii) \implies (i)$, as $[D-{H}]\in \jac(\RR)_2$ there exists $f \in \RR(X)$ and $E \in \abs{E}$ such that $\div f = 2D-2{E}$. Therefore $2D = \div f\ell^2$ {for some linear form $\ell$}, and by projective normality there exists $q \in R_2$ such that $2D = \div q = (\div q)_{\RR}$.
    Furthermore, if $[D-{H}]\in \jac(\RR)_2^+$ then $f$ is nonnegative on $X(\RR)$ and thus $q\in \pos$.
\end{proof}

\begin{theorem}\label{thm:maxzeroesfaces}
    Let $X \subset \PP^n$ be a totally real smooth irreducible projectively normal curve. Let $[H]$ be the hyperplane divisor class, and let $D \in \Div_{\ge 0}(X(\RR))$ be a divisor of degree $\deg D = \deg X$ with $[2D] = [2{H}]$.
    Then the following are equivalent:
    \begin{enumerate}
        \item $\face_D \subset \pos$ is an extreme ray of $\pos$;
        \item $[D - {H}]$ is a {positive} $2$-torsion point of the Jacobian.
    \end{enumerate}
\end{theorem}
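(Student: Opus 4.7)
The plan is to derive both implications from two results already established: Proposition \ref{prop:2torsion}, which relates positive $2$-torsion classes $[D-E]\in\jac(\RR)_2^+$ to the existence of nonnegative quadrics $q\in\pos$ with $\div q = (\div q)_\RR = 2D$, and Lemma \ref{lem:dimension}, which computes $\dim\face = \ell((\div q)_\CC)$ for any $q$ in the relative interior of a nonzero face $\face$. The key quantitative input is the degree bookkeeping $\deg(\div q) = 2\deg X$: under the hypothesis $\deg D = \deg X$, a nonnegative $q$ whose real part of its divisor equals $2D$ must have $(\div q)_\CC = 0$, and then $\ell((\div q)_\CC) = \ell(0) = 1$.

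For the implication (ii) $\Rightarrow$ (i), I would invoke Proposition \ref{prop:2torsion} to produce $q \in \pos$ with $\div q = (\div q)_\RR = 2D$. By Lemma \ref{lem:image}, $D$ belongs to $\Im\Phi$ and $\div(\face_D) = D$, so $q \in \relint \face_D$ by Theorem \ref{thm:char_faces}. The degree condition $\deg D = \deg X$ then forces $(\div q)_\CC = 0$, and Lemma \ref{lem:dimension} gives
\[
    \dim \face_D = \ell((\div q)_\CC) = \ell(0) = 1.
\]
Hence $\face_D$ is an extreme ray of $\pos$.

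For the implication (i) $\Rightarrow$ (ii), if $\face_D$ is an extreme ray, it is nonzero, so by Lemma \ref{lem:image} there exists $q \in \pos$ with $(\div q)_\RR = 2D$, and such a $q$ lies in $\relint \face_D$ by Theorem \ref{thm:char_faces}. The same degree count shows $(\div q)_\CC = 0$, so in fact $\div q = (\div q)_\RR = 2D$. At this point Proposition \ref{prop:2torsion} applies directly and gives $[D-E] \in \jac(\RR)_2^+$, which is statement (ii) (the hypothesis $[2D]=[2E]$ together with $D$, $E$ being real divisors automatically places $[D-E]$ in $\jac(\RR)_2$, so the content of (ii) is the positivity).

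I do not expect a real obstacle here: the theorem essentially packages Proposition \ref{prop:2torsion} and the Riemann–Roch dimension count of Lemma \ref{lem:dimension} into a single extremality criterion, and the only care needed is the degree-zero argument that reduces $(\div q)_\CC$ to the empty divisor. The substance of the proof lives upstream, in the Jacobian/sign analysis behind $\jac(\RR)_2^+$ and in the projective normality used in Lemma \ref{lem:dimension}.
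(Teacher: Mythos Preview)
Your proposal is correct and follows essentially the same approach as the paper, which simply says ``Combine \Cref{prop:2torsion} and \Cref{thm:char_faces}.'' Your unpacking via \Cref{lem:dimension} and the observation $\ell(0)=1$ is a valid way to read off $\dim\face_D=1$; the paper's terse proof implicitly uses the equivalent direct degree argument (any nonzero $q\in\face_D$ has $\div q\ge 2D$ and $\deg(\div q)=2\deg D$, forcing $\div q=2D$, so $\face_D$ is spanned by a single quadric). One small phrasing issue: in (i)$\Rightarrow$(ii) you write ``so by \Cref{lem:image} there exists $q\in\pos$ with $(\div q)_\RR=2D$,'' but the existence of such a $q$ is really what you are proving via the degree count, not something you are quoting from \Cref{lem:image}; it would be cleaner to say ``pick any nonzero $q\in\face_D$; the degree count forces $\div q=2D$.''
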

\begin{proof}
    Combine \Cref{prop:2torsion} and \Cref{thm:char_faces}.
\end{proof}

Therefore, the existence of a nonnegative quadric with a maximal number of real zeroes can be read from the positive $2$-torsion points in the Jacobian. However, to be able to apply \Cref{lem:positive_torsion_genus} to count their number, we need to be sure that the torsion points are totally real, i.e. that their divisor classes can be represented using only points of $X(\RR)$.

\begin{definition}\label{def:NX}
    Let $X$ be an real curve. We say that a real divisor class $\beta \in \Cl^k(X)$ is \emph{totally real} if there exists $A_1, \dots , A_k\in X(\RR)$ such that $\beta = [A_1+\dots +A_k]$. We denote by $N(X)$ the smallest $k$ such that all real divisor classes in $\Cl^k{X}$ are totally real.
\end{definition}

The study of the invariant $N(X)$, called \emph{totally real divisor threshold}, was initiated by Scheiderer in \cite{scheidererSumsSquaresRegular2000}; \cite[Th.~2.7]{scheidererSumsSquaresRegular2000} shows that $N(X)$ is finite for every real curve $X$. The papers \cite{huismanGeometryAlgebraicCurves2001, monnierDivisorsRealCurves2003, baldi2025totallyrealdivisorscurves}) give bounds on $N(X)$ depending on the topological type of the curve and on its intrinsic metric properties. The fact that $N(X)<+\infty$ allows us to describe $\cE^\rr(\pos_{X, 2d})$ in sufficiently high degree $2d$.

\begin{lemma} \label{lem:totally_real_torsion}
    Let $X \subset \PP^n$ be a totally real smooth irreducible projectively normal curve, {and denote $[H]$ the hyperplane divisor class}.
    For every $d\in \NN$, and for every positive $2$-torsion point $\alpha \in \jac(\RR)_2^+$ such that
    $\alpha+[d{H}]$ is totally real, there exist a real rooted extreme ray $ \RR_{\ge 0} \cdot q_\alpha \in \cE^\rr(\pos_{X, 2d})$ such that $\double(q_\alpha) = \alpha$.
\end{lemma}
\begin{proof}
    Since $\alpha$ is a positive $2$-torsion point, there exists $f \in \RR(X)$ nonnegative on $X(\RR)$ such that $\div f = 2D$ and $\alpha =[D]$. As $\alpha + [d{H}] = [D+d{H}]$ is totally real by hypothesis, there exists a totally real divisor $D''$ such that $[D+d{H}] = [D'']$. Moreover, since the embedding is projectively normal and $[2D''] = [2d{H}]$, there exists $q_\alpha \in R_2$ such that $\div q_\alpha = (\div q_\alpha)_\RR = 2D''$. Finally, since {$\alpha \in \jac (\RR)_2^+$}, we have (up to a global sing change) $q_\alpha \ge 0$ on $X(\RR)$, i.e. $q_\alpha \in \pos_{X, 2d}$. From \Cref{thm:maxzeroesfaces} we conclude that $\RR_{\ge 0} \cdot q_{\alpha}$ is an extreme ray of $\pos_{X, 2d} \cong \pos_{\nu_{n,d}(X), 2}$ and, by construction, we have $\double(q_\alpha) = \alpha$.
\end{proof}

\begin{corollary}\label{cor:families_torsion}
    Let $X\subset\PP^n$ be a totally real smooth irreducible curve of genus $g$. For all sufficiently large $d$, $\mathfrak{D}(\cE^{\mathrm{rr}}(\pos_{X, 2d})) = \jac(\RR)_2^+ \cong (\ZZ/2\ZZ)^{g}$. Moreover, $f_1, f_2$ defining real rooted extreme rays of $\pos_{X, 2d}$ satisfy $\mathfrak{D}(f_1) = \mathfrak{D}(f_2)$ if and only if there exists $g \in \RR(X)$ such that $f_1 = g^2 f_2$.
\end{corollary}
\begin{proof}
    Recall that $\pos_{X, 2d} \cong \pos_{\nu_{n,d}(X), 2}$, {and if $[H]$ is the hyperplane class on $X$, then $[dH]$ is the hyperplane class of $\nu_{n,d}(X)$}. Moreover, if $d$ is big enough, $\nu_{n,d}(X)$ is projectively normal and
    \[
        d \cdot \deg X = \deg \nu_{n,d}(X) \ge N(X),
    \]
    see \Cref{def:NX}. This implies that for every $\alpha \in \jac(\RR)_2^+$, the divisor class $\alpha + [d{H}]$ is totally real. Therefore, from \Cref{lem:totally_real_torsion} we deduce that for all $\alpha \in \jac(\RR)_2^+$ there exist $q_{\alpha} \in \pos_{X, 2d}$ real rooted and generating an extreme rays such that $\double(q_\alpha)  =\alpha$. We recall from \Cref{lem:positive_torsion_genus} that $\jac(\RR)_2^+ \cong (\ZZ/2\ZZ)^{g}$, and conclude by applying \Cref{lem:positive_torsion} that $\mathfrak{D}(f_1) = \mathfrak{D}(f_2)$ if and only if there exists $g \in \RR(X)$ such that $f_1 = g^2 f_2$.
\end{proof}

\Cref{cor:families_torsion} highlights the difference between sums of squares and nonnegative forms. Indeed, if the sums of squares and nonnegative cones are equal, then they have the same extreme rays. But all sums of squares extreme rays correspond only to the zero of the Jacobian, which is a positive $2$-torsion point. If the genus is at least one, then for all sufficiently large $d$ there are positive $2$-torsion points which correspond to non-sums of squares extreme rays, and thus we have $\Sigma_{X, 2d} \subsetneq \pos_{X, 2d}$. This is a particular case of \cite[Th.~1.1]{blekhermanSumsSquaresVarieties2016a}. For a more detailed description in the genus one case, see \Cref{thm:elliptic_normal}.

\section{Nonnegative forms on genus one curves}\label{sec:genus_one}
\subsection{Elliptic normal curves}\label{sec:elliptic_normal}
We now consider in detail curves of genus $1$.  The case of plane cubic curves, which inspired our work, is detailed in elementary terms in \Cref{prop:psd_plane_cubic}.
\begin{definition}
    An \emph{elliptic normal curve} is a non-degenerate, smooth projective curve $X \subset \PP^n$ of genus one such that $\deg X = n+1$.
\end{definition}
Recall that every elliptic normal curve is automatically projectively normal, see e.g. \cite[Ex.~IV.4.2]{hartshorneAlgebraicGeometry1977}. Using this fact it is possible to show that the Veronese embedding of an elliptic normal curve is again an elliptic normal curve: therefore we can restrict to the case of quadratic forms, as we did in \Cref{sec:psd_forms_curves}.
If the genus one curve $X$ is totally real, then $X(\RR)$ has either one or two connected components, and the invariant $N(X)$, introduced in \Cref{def:NX}, is always equal to one \cite{huismanGeometryAlgebraicCurves2001, monnierDivisorsRealCurves2003}.

We use these facts to fully describe the face lattice of $\pos_{X,2}$ for an alliptic normal curve $X$.

\begin{theorem}\label{thm:elliptic_normal}
    Let $X \subset \PP^n$ be a totally real elliptic normal curve, {and denote $[H]$ the hyperplane divisor class}. If we denote $\{ \,O, T \, \} = \jac(\RR)_2^+$ the positive $2$-torsion points, then all the proper faces of $\pos_{X, 2}$ can be described as follows:
    \begin{enumerate}
        \item $\face_D$, where $D$ is a totally real effective divisor of degree $1 \le \deg D \le n$. In this case, we have $\dim \face_D = 2(n+1 - \deg D)$.
        \item $\face_D$, where $D$ is a totally real effective divisor of degree $n+1$ such that ${[D-H]} = O\in \jac(\RR)_2^+$. In this case, $\face_D = \RR_{\ge 0} \cdot q$ is an extreme ray of both $\Sigma=\Sigma_{X, 2}$ and $\pos$.
        \item $\face_D$, where $D$ is a totally real effective divisor of degree $n+1$ such that ${[D-H]} = T\in \jac(\RR)_2^+$. In this case, $\face_D = \RR_{\ge 0} \cdot q$ is an extreme ray of $\pos$, but it is not in $\Sigma=\Sigma_{X, 2}$.
    \end{enumerate}
    In particular, we have $\cE(\pos_{X,2}) = \cE^\rr(\pos_{X,2})$.
\end{theorem}
\begin{proof}
    From \Cref{thm:char_faces}, all the faces of $\pos$ are of the form $\face_D$ for some totally real effective divisor $D$. If $\deg D > \deg X = n+1$, then $\face_D = \{ \, 0 \, \}$.

    From \Cref{thm:maxzeroesfaces}, if $\deg D = n+1$ then $\face_D$ is an extreme ray if and only if $[D-{H}]$ is a positive $2$-torsion point of the Jacobian. Recall that $N(X)=1$ from \cite{huismanGeometryAlgebraicCurves2001, monnierDivisorsRealCurves2003}, since $X(\RR)$ has either $1=g$ or $2 = g+1$ connected components. Moreover $X$ is projectively normal, hence every positive $2$-torsion point determines a family of extreme rays of $\pos$, see  \Cref{cor:families_torsion}.
    
    If $[D-{H}] = O\in \jac(\RR)_2^+$, then there exist $f \in \RR(X)$ and $E \in \abs{H}$ such that $D = {E} + \div f$, and thus by projective normality there exists $\ell \in R_1 = \RR[X]_1$ such that $\div \ell = D$. Therefore $\div \ell^2 = 2D$ and $\face_D = \RR_{\ge 0} \cdot \ell^2$ is an extreme ray of both $\Sigma$ and $\pos$. If $[D-{H}] = T\in \jac(\RR)_2^+$, then $D$ is not the divisor of a linear form, and thus the extreme ray $\face_{D}$ is not generated by a square. This implies that $\face_D$ is not contained in $\Sigma$.

    We now study higher-dimensional faces. We first show that, if $D$ is a totally real effective divisor of degree $1 \le \deg D \le n$, then $D \in \Im (\Phi)$, i.e. there exists $q \in \pos$ such that $(\div q)_\RR = D$ (see \Cref{lem:image}). Assume that $\deg D = n = \deg X -1$. Using the isomorphism between $X$ and its Jacobian, we can always find $A_1\neq A_2 \in X(\RR)$ such that $D+A_1, D+A_2 \in \jac(\RR)_2^+$. From \Cref{thm:maxzeroesfaces}, there exists $q_1, q_2 \in \pos$ such that $\div q_i = (\div q_i)_\RR = D+A_i$. Therefore $q_1 + q_2$ is nonnegative and vanishes exactly on $D$, i.e. $(\div (q_1+q_2))_\RR = 2D$, showing that $D \in \Im \Phi$. A similar argument works for all totally real divisors of degree smaller than $n$.
    
    The dimension of $\face_D$ can then be deduced using \Cref{lem:dimension} and the Riemann-Roch theorem, as follows. Since $D \in \Im \Phi$, then \Cref{lem:image,lem::rel_int} imply that for all $q\in \relint \face_D$ we have $(\div q)_\RR = 2D$. It follows from \Cref{lem:dimension} that $\dim \face_D = \ell((\div q)_\CC)$ for all $q \in \relint \face_D$. Using the Riemann-Roch theorem and the fact that $1 \le \deg (\div q)_\CC = 2(n+1 - \deg D)$, we have:
    \[
        \ell((\div q)_\CC) = \deg (\div q)_\CC + \ell(K - (\div q)_\CC) = 2(n+1 - \deg D)
    \]
    concluding the proof.
\end{proof}

\Cref{thm:elliptic_normal} completely describes the facial structure of the cone of nonnegative quadrics on an elliptic normal curve. Using the fact that the Veronese embedding of an elliptic normal curve is an elliptic normal curve, \Cref{thm:elliptic_normal} also describes all faces of $\pos_{X, 2d} \cong \pos_{\nu_{n,d}(X), 2}$. This is the first complete description of the facial structure of the nonnegative cone beyond Hilbert's cases \cite{schulzeConesLocallyNonNegative2021}. It is therefore the first non-rational example where such a description is shown.

We also remark that we do not explicitly see the distinction between the two possible topologies of $X(\RR)$ in \Cref{thm:elliptic_normal}, as the theorem applies to both the connected and disconnected case. In particular, we are not using the real $2$-torsion points of the Jacobian which are not positive. Such points exist when $X(\RR)$ is disconnected, see \Cref{lem:torsion_genus_components,lem:positive_torsion_genus}, and they will play a key role in \Cref{sec:cara} for the study of Carath\'eodory number of the of the dual cone $\pos^\vee$. 

\subsection{Auxiliary lemmas}
We conclude the section presenting two useful lemmas about the signature signs of quadrics on connected components of elliptic normal curves, that will be needed in \Cref{sec:cara}.

\begin{lemma}\label{lem:positive_oval}
    Let $X \subset \PP^n$ be a totally real elliptic normal curve whose real locus has two connected components. If $Y \subset X(\RR)$ is a connected component, then there exists $q\in\pos_{Y,2} \setminus \pos_{X,2}$ (i.e. $q$ is nonnegative on $Y$ but changes sign on $X(\RR)$) such that $\div q = (\div q)_{\RR} = 2D$, with $\supp(D)$ consist of $\deg X = n+1$ distinct points of $Y$.
\end{lemma}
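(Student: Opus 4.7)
The plan is to find $n+1$ distinct points $A_1, \ldots, A_{n+1} \in Y$ whose divisor $D = \sum A_i$ satisfies $[D - E] \in \jac(\RR)_2 \setminus \jac(\RR)_2^+$, and then to invoke \Cref{prop:2torsion}. Fix a real base point $P_0 \in Y$ and identify $X$ with $\jac(X)$ via Abel--Jacobi based at $P_0$; under this identification $Y$ becomes the identity component of $\jac(\RR)$, which is a subgroup. Since $n+1 \geq 2$, the summation map $Y^{n+1} \to \jac(\RR)$ sending $(A_1, \ldots, A_{n+1})$ to $\sum [A_i - P_0]$ is surjective onto the identity component, so the reachable classes $[D - E]$ form an entire coset $C^*$ of the identity component in $\jac(\RR)$.

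The key step is showing $C^*$ contains a non-positive real $2$-torsion point. By \Cref{lem:torsion_genus_components}, $|\jac(\RR)_2| = 4$, so the component $C^*$ meets $\jac(\RR)_2$ in exactly two elements. If both were positive then they would exhaust $\jac(\RR)_2^+$ (which has order $2$ by \Cref{lem:positive_torsion_genus}), and together with $0 \in \jac(\RR)_2^+$ this would force $C^*$ to be the identity component and $\jac(\RR)_2^+$ to coincide with the $2$-torsion subgroup of the identity component. To exclude this, I would pass to a Weierstrass model $y^2 = (x-a_1)(x-a_2)(x-a_3)$ with $a_1 < a_2 < a_3$ and identity $O$ at infinity. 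The identity component of $X(\RR)$ is the unbounded branch $\{x \geq a_3\} \cup \{O\}$, while the bounded oval carries the $2$-torsion points $(a_1, 0)$ and $(a_2, 0)$. Evaluating the rational functions $f_i = x - a_i$ (each with divisor $2(a_i, 0) - 2O$), only $f_1$ is everywhere nonnegative, so the unique nontrivial positive $2$-torsion point is $(a_1, 0)$, which lies off the identity component. Hence $\jac(\RR)_2^+$ is never contained in the identity component, and $C^*$ must contain a non-positive $2$-torsion point $\tau$.

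Having fixed such a $\tau$, I would choose a preimage $(A_1, \ldots, A_{n+1}) \in Y^{n+1}$ with pairwise distinct coordinates; this is possible because the fiber has real dimension $n \geq 1$ (as $n+1 = \deg X \geq 3$ for an elliptic normal curve) while the coincidence locus intersects it in dimension at most $n-1$. Setting $D = \sum A_i$, \Cref{prop:2torsion} produces a quadric $q \in R_2$ with $\div q = (\div q)_\RR = 2D$; because $[D - E] = \tau \notin \jac(\RR)_2^+$, the same proposition gives $q \notin \pos_{X, 2}$. Since $\div q = 2D$ is supported on $Y$ with every multiplicity even, $q$ has constant sign along $Y$ and constant sign along the other oval $Y'$, and the two signs are opposite because $q$ is not globally nonnegative. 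Replacing $q$ by $-q$ if needed gives the desired element of $\pos_{Y, 2} \setminus \pos_{X, 2}$ with $\supp(D)$ equal to $n+1$ distinct points of $Y$.

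The main obstacle is ruling out the coincidence between $\jac(\RR)_2^+$ and the $2$-torsion subgroup of the identity component of $\jac(\RR)$: without this, the reachable coset $C^*$ could consist entirely of positive torsion points and the construction would collapse. The Weierstrass computation settles this globally by exhibiting, on every totally real elliptic curve with two ovals, a positive nontrivial $2$-torsion point that lies on the bounded oval rather than on the identity component.
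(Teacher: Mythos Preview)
Your proof is correct. Both your argument and the paper's rest on the same idea: find a totally real effective divisor $D$ of degree $n+1$ supported on $Y$ with $[D-E]$ a non-positive real $2$-torsion point, then invoke \Cref{prop:2torsion}. The execution differs. The paper starts from a single point $A\in Y$ with $[A-A_0]$ a non-positive $2$-torsion class, constructs $q_A$ with $\div q_A = 2(n+1)A$, and then perturbs the tuple $(A,\dots,A)$ along $Y$ to obtain distinct points summing to the same class; it leaves implicit the justification that such an $A$ can be found in the given component $Y$. You instead analyze the full image of the summation map $Y^{n+1}\to\jac(\RR)$, identify the reachable classes $[D-E]$ as a coset $C^*$ of the identity component, and then verify via an explicit Weierstrass computation that the nontrivial element of $\jac(\RR)_2^+$ always lies on the \emph{non}-identity component, forcing each coset to contain exactly one non-positive $2$-torsion point. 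Your dimension argument for producing distinct points is also cleaner than a perturbation. The trade-off: the paper's argument is shorter, while yours makes fully transparent the one substantive point (namely, that $\jac(\RR)_2^+$ is never contained in a single component of $\jac(\RR)$) that the paper glosses over.
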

\begin{proof}
    Fix $A_0 \in X(\RR)$. Let $Y \subset X(\RR)$ be a connected component, and let $A\in Y$ be the (unique) point such that $[A-A_0] = \alpha \in \jac(\RR)_2 \setminus \jac(\RR)_2^+$ (such a point exists since genus one curves are isomorphic to their Jacobian). Therefore, it follows from \cite[Problem~5.41]{fultonAlgebraicCurves1989} that there exists $q_A \in R_2$ such that $\div q_A = 2 (n+1) A$. As $[A-A_0]$ is not a positive $2$-torsion point, $q_A$ changes sign on $X(\RR)$, and we may assume that $q_A \in \pos_{Y,2} \setminus \pos_{X,2}$. Moving one pair of points at a time along $Y$, it is possible to continuously perturb the $(n+1)$-tuple $(A,\dots, A)$ to $(A_1, \dots , A_{n+1}) \in Y^{n+1}$, in such a way the $A_i$'s are pairwise distinct and $A_1 \oplus \dots \oplus A_{n+1} = A$.
    Therefore there exists $q \in \pos_{Y,2} \setminus \pos_{X,2}$, perturbation of $q_A$, such that $\div q = 2 (A_1 + \dots + A_{n+1})$, concluding the proof.
\end{proof}

\begin{lemma}\label{lem:changing_sign}
    Let $X \subset \PP^n$ be a totally real elliptic normal curve whose real locus has two connected components. If $X(\RR) = Y_1 \sqcup Y_2$ and $B_1, \dots , B_n \in Y_1$, then there exists $q \in \R_2$ such that:
    \begin{enumerate}
        \item $(\div q)_{\RR} = 2(B_1 + \dots + B_n)$;
        \item $q$ is nonnegative on $Y_1$;
        \item $q$ is strictly negative on $Y_2$.
    \end{enumerate}
\end{lemma}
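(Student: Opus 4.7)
The plan is to construct $q$ as a small perturbation, inside the pencil $V_B := \{q \in R_2 : \div q \geq 2B\}$ with $B := B_1 + \cdots + B_n$, of a sign-changing quadric $q_0$ produced by \Cref{prop:2torsion}. By Riemann--Roch (cf.\ \Cref{lem:dimension}) and projective normality of elliptic normal curves, $\dim V_B = \ell(2E - 2B) = 2$.

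First I would locate an auxiliary point $C \in X(\RR)$ such that the class $[B + C - E] \in \jac(\RR)$ is a real $2$-torsion which is \emph{not} positive. Such a $C$ exists because the translated Abel--Jacobi map $\Theta \colon X(\RR) \to \jac(\RR)$, $P \mapsto [B + P - E]$, is a diffeomorphism (it is a translate of the isomorphism $X \xrightarrow{\sim} \jac$ restricted to the real locus), and because the hypothesis that $X(\RR)$ has two components combined with \Cref{lem:torsion_genus_components} and \Cref{lem:positive_torsion_genus} gives $|\jac(\RR)_2| - |\jac(\RR)_2^+| = 4 - 2 = 2$. Applying \Cref{prop:2torsion} to the totally real effective divisor $D := B + C$ of degree $n+1$ produces $q_0 \in R_2$ with $\div q_0 = 2(B + C) = (\div q_0)_\RR$ and $q_0 \notin \pos_{X,2}$. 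Since every real zero of $q_0$ has even multiplicity, $q_0$ does not change sign across any of them, and so it has constant sign on each of $Y_1$ and $Y_2$ (away from its zeroes). Because $q_0$ changes sign on $X(\RR)$, the two signs are opposite, and after replacing $q_0$ by $-q_0$ if necessary we may assume $q_0 \geq 0$ on $Y_1$ and $q_0 \leq 0$ on $Y_2$, with strict inequality away from $\supp D$.

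For the perturbation, I would pick $q_1 \in V_B$ with $q_1(C) \neq 0$ if $C \notin \{B_i\}$, or (in the degenerate case $C = B_i$) with $q_1$ having exactly a $2$-fold zero at $B_i$. Such $q_1$ exists because otherwise all of $V_B$ would be contained in the $1$-dimensional space $V_{B+C} = \{q \in R_2 : \div q \geq 2(B+C)\}$ already containing $q_0$, contradicting $\dim V_B = 2$. Set $q_\epsilon := q_0 + \epsilon q_1$. In a local parameter $u$ at $C$ one gets an expansion of the form $q_\epsilon(u) = a u^{2k} + \epsilon b u^{2k-2} + (\text{higher order})$, with $k \in \{1, 2\}$ and $a, b$ nonzero; choosing $\epsilon$ so that $\epsilon b / a > 0$ makes the two zeroes of $q_\epsilon$ near $C$ form a complex conjugate pair, so that $(\div q_\epsilon)_\RR = 2B$. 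Away from $C$, compactness of $Y_1 \setminus \supp B$ and of $Y_2$ together with the strict sign of $q_0$ there implies that $q_\epsilon \geq 0$ on $Y_1$ and $q_\epsilon < 0$ on $Y_2$ for all sufficiently small $|\epsilon|$. The main obstacle is bookkeeping the local sign condition at $C$, which must be tuned differently depending on whether $C \in Y_1$ (where $a > 0$) or $C \in Y_2$ (where $a < 0$), as well as in the degenerate subcase $C = B_i$ (where $q_0$ has a $4$-fold rather than $2$-fold zero); in all three situations the above choice of sign of $\epsilon$ simultaneously removes the real zeroes near $C$ and preserves the required sign of $q_\epsilon$ on the component containing $C$.
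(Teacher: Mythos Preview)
Your argument is correct, but the paper's proof takes a shorter route that avoids the perturbation entirely. Instead of using a single non-positive real $2$-torsion point and then deforming inside the two-dimensional pencil $V_B$, the paper uses \emph{both} non-positive torsion points $T_2,T_3\in\jac(\RR)_2\setminus\jac(\RR)_2^+$ at once: setting $A_i\in X(\RR)$ so that $B_1\oplus\dots\oplus B_n\oplus A_i=T_i$, one obtains quadrics $q_2,q_3$ with $\div q_i=2(B+A_i)$, each nonnegative on $Y_1$ and nonpositive on $Y_2$. Since $T_2\neq T_3$ forces $A_2\neq A_3$, the sum $q=q_2+q_3$ is strictly negative on all of $Y_2$ and has $(\div q)_\RR=2B$ directly. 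No local analysis at the extra point, no choice of the sign of $\epsilon$, no compactness argument beyond the obvious one.

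Your approach buys a little generality (it needs only one non-positive torsion point together with $\dim V_B=2$), at the cost of the local bookkeeping at $C$. Two small wrinkles worth tightening: the compactness step should be run on $Y_1$ and $Y_2$ minus small neighborhoods of \emph{all} real zeros of $q_0$ (including $C$), not on $Y_1\setminus\supp B$, which is neither compact nor a set on which $q_0$ is strictly positive when $C\in Y_1$; and in the degenerate case $C=B_i$, the phrase ``exactly a $2$-fold zero'' should read ``exactly a $2m$-fold zero'' if $B_i$ occurs with multiplicity $m$ in $B$. Neither affects the validity of the scheme.
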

\begin{proof}
    Consider the non-positive $2$-torsion points $\{ \,T_2, T_3 \, \} = \jac(\RR)_2 \setminus \jac(\RR)_2^+$, and define $A_i = T_i \ominus B_1 \ominus \dots \ominus B_n$ for $i=2,3$. Therefore there exist $q_2, q_3 \in R_2$ such that $\div q_i = (\div q_i)_\RR = 2(B_1 + \dots + B_n + A_i)$ for $i=2,3$. As $T_2, T_3$ are non-positive $2$-torsion points, then $q_2, q_3$ change sign on $X(\RR)$. In particular, we can assume that $q_2, q_3$ are nonnegative on $Y_1$ and nonpositive on $Y_2$. We then define $q = q_1 + q_2$. Notice that:
    \begin{enumerate}
        \item $q$ vanishes with multiplicity two at $B_1, \dots , B_n$, since both $q_2$ and $q_3$ vanish there with multiplicity two; 
        \item $q$ in nonnegative on $Y_1$, since both $q_2$ and $q_3$ are nonnegative on $Y_1$;
        \item As $A_2 \neq A_3$, $q$ does not vanish at any point of $X(\RR)$ except $B_1, \dots , B_n$, and $q$ is strictly negative on $Y_2$.
    \end{enumerate}
    This concludes the proof.
\end{proof}
\section{Carath\'eodory numbers for genus one curves}\label{sec:cara}
We refer to \Cref{sec:moment_prob} for the notations and terminology used in this section.
We investigate the Carath\'eodory number $\car = \car_{X, 2}$ of $\pos^\vee = \pos_{X, 2}^\vee$, where $X \subset \PP^n$ is a totally real elliptic normal curve. We show that the Carath\'eodory number depends on the topology of the real locus $X(\RR)$, using a technique inspired by Hilbert's proof on ternary quartics (see \cite{blekhermanLowRankSumofSquaresRepresentations2019} for a modern exposition).

Using standard conic duality, we can use \Cref{thm:elliptic_normal} to describe linear functionals $L\in \partial \pos^\vee$ on the boundary of the moment cone.
Indeed, if $L\in \partial \pos^\vee$ then by conic duality there exists $q \in \pos$ such that $L(q) = 0$.
More precisely, \Cref{thm:elliptic_normal} implies the following corollary.

\begin{corollary}\label{cor:boundary_moment_cone}
    Let $X \subset \PP^n$ be a totally real elliptic normal curve. Let $\{ \,O, T \, \} = \jac(\RR)_2^+$ be the positive $2$-torsion points. We have
    \begin{align*}
    \partial \pos^\vee = & \left\{ \, \sum_{i=1}^k \lambda_i \, \eval_{A_i} \mid  1\le k \le n, \ A_1, \dots , A_k \in X(\RR), \ \lambda_1, \dots , \lambda_k \in \RR_{>0} \, \right\} \\ & \sqcup \left\{ \, \sum_{i=1}^{n+1} \lambda_i \, \eval_{A_i} \mid \ A_1, \dots , A_{n+1} \in X(\RR) \textnormal{ distinct}, \ \lambda_1, \dots , \lambda_{n+1} \in \RR_{>0}, \ A_1 \oplus \dots \oplus A_{n+1} = O \, \right\} \\ & \sqcup \left\{ \, \sum_{i=1}^{n+1} \lambda_i \, \eval_{A_i} \mid \ A_1, \dots , A_{n+1} \in X(\RR) \textnormal{ distinct}, \ \lambda_1, \dots , \lambda_{n+1} \in \RR_{>0}, \ A_1 \oplus \dots \oplus A_{n+1} = T \, \right\}
\end{align*}
\end{corollary}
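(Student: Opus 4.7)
The corollary is a direct translation of \Cref{thm:elliptic_normal} via conic duality. The bridge is the standard equivalence
\[
    L \in \partial \pos^\vee \setminus \{0\} \iff \exists\, q \in \pos \setminus \{0\} \text{ with } L(q) = 0,
\]
combined with $\pos^\vee = \cone(\eval_A : A \in X(\RR))$ from \Cref{sec:moment_prob}, which lets us write any nonzero $L \in \partial \pos^\vee$ as $L = \sum_{i=1}^k \lambda_i \eval_{A_i}$ with distinct $A_i \in X(\RR)$ and $\lambda_i > 0$.

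For the inclusion $\subseteq$, fix such an $L$ and a witness $q \in \pos \setminus \{0\}$ with $L(q) = 0$. The identity $\sum \lambda_i q(A_i) = 0$, together with $\lambda_i > 0$ and $q \ge 0$ on $X(\RR)$, forces $q(A_i) = 0$ for every $i$. Since real zeros of nonnegative quadrics have even multiplicity (as in \Cref{sec:faces_to_divisors}), this gives $\div q \ge 2D$ for $D = A_1 + \dots + A_k$, so $q \in \face_D \setminus \{0\}$ and hence $\face_D$ is a nonzero face of $\pos$. \Cref{thm:elliptic_normal} then forces $\deg D = k \le n+1$, with the additional condition $[D - E] \in \{O, T\}$ when $k = n+1$, placing $L$ in one of the three advertised sets.

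For the reverse inclusion $\supseteq$, given $L$ from one of the three sets set $D = A_1 + \dots + A_k$. \Cref{thm:elliptic_normal} ensures $\face_D$ is nonzero: of dimension $2(n+1-k) \ge 2$ in case (i), and an extreme ray in cases (ii) and (iii). Any $0 \ne q \in \face_D$ then satisfies $q(A_i) = 0$ for every $i$, giving $L(q) = 0$ and hence $L \in \partial \pos^\vee$.

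The three sets are pairwise disjoint: the exposed face $\{q \in \pos : L(q) = 0\}$ coincides with $\face_D$ for $D = A_1 + \dots + A_k$, so by the Galois correspondence of \Cref{thm:char_faces} the divisor $D$ is an intrinsic invariant of $L$. Consequently $k = \deg D$ distinguishes set (1) from sets (2) and (3), and the positive $2$-torsion class $[D - E] \in \jac(\RR)_2^+$ distinguishes set (2) from set (3). The whole argument is a routine bookkeeping once \Cref{thm:elliptic_normal} is in hand, and I do not foresee any serious obstacle.
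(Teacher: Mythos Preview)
Your proposal is correct and follows exactly the approach the paper indicates: the paper states the corollary as an immediate consequence of \Cref{thm:elliptic_normal} via standard conic duality, without giving further details, and your argument fleshes out precisely this deduction. One minor remark on your disjointness step: the claim that ``$D$ is an intrinsic invariant of $L$'' relies on $D$ being a face divisor (so that $\div(\face_D)=D$), which you should note holds in all three sets by \Cref{thm:elliptic_normal}; otherwise $\face_D = \face_{D'}$ does not a priori force $D = D'$.
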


\subsection{Critical values and boundary points}
\label{sec:critical}
    Let $X \subset \PP^n$ be a totally real elliptic normal curve. For $Y \subset X(\RR)$ a connected component of $X(\RR)$, let $\pos_{Y, 2d} \subset R_{2d}$ be the cone of nonnegative forms on $Y$. As $Y$ is not (Euclidean) dense in $X(\RR)$, we have $\pos_{Y, 2d} \supsetneq \pos_{X, 2d}$ and $\pos_{Y, 2d}^\vee \subsetneq \pos_{X, 2d}^\vee$. 
    
We will use the following notation:
\begin{itemize}
    \item $Z \coloneqq v_{n,2}(X(\RR))$;
    \item If $S$ is any set, $$\cone_k(S) \coloneqq \left\{ \, \sum_{i=1}^k \lambda_i s_i \colon s_i \in S, \ \lambda_i \in \RR_{\ge 0} \, \right\}$$ denotes the set of conic combinations of points of $S$ which use at most $k$ points from $S$.
\end{itemize}

As we have $v_{n, 2}(A) \cong \eval_A \in R_2^*$, then we can characterize the  Carath\'eodory number as:
 \[\car = \car_{X,2} = \min \left\{ \, k \in \NN \mid \cone_k(Z) = \pos^\vee \, \right\}\]

\begin{lemma}\label{lem:car_ineq}
    Let $X \subset \PP^n$ be a totally real elliptic normal curve. Then $n+1 = \deg X
\le \car_{X,2} \le \deg X +1 = n+2$.
\end{lemma}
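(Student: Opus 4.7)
The plan is to prove the two inequalities separately, invoking Theorem C (which describes all faces of $\pos$) for the lower bound and Corollary 5.1.1 (which describes the boundary of $\pos^\vee$) for the upper bound. The engine behind both is the dictionary established in the previous sections between divisors and faces: the lower bound says that few evaluation points force $L$ to the boundary via Theorem C, and the upper bound says that any boundary point already uses few evaluations via Corollary 5.1.1.

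For the lower bound $\car_{X,2} \ge n+1$, I would pick a linear functional $L$ in the relative interior of $\pos^\vee$, which exists because $\pos$ is a pointed closed convex cone and hence its dual is full-dimensional in $R_2^*$. By conic duality, such an $L$ satisfies $L(q) > 0$ for every $q \in \pos \setminus \{0\}$. Suppose, for contradiction, that $L = \sum_{i=1}^{k} \lambda_i \eval_{A_i}$ with $\lambda_i > 0$, $A_i \in X(\RR)$, and $k \le n$. Set $D = A_1 + \dots + A_k$, a totally real effective divisor of degree $\le n$. By Theorem C (item (i)), the face $\face_D$ has dimension $2(n+1-k) \ge 2$, so there exists a nonzero $q \in \face_D \subset \pos$. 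Since $q \in \face_D$ means $\div q \ge 2D$, the form $q$ vanishes at each $A_i$, hence $L(q) = \sum_i \lambda_i q(A_i) = 0$, contradicting $L \in \inte(\pos^\vee)$. This forces $k \ge n+1$, so $\car_{X,2} \ge n+1$.

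For the upper bound $\car_{X,2} \le n+2$, take an arbitrary $L \in \pos^\vee$. If $L \in \partial \pos^\vee$, then Corollary 5.1.1 expresses $L$ as a conic combination of at most $n+1$ point evaluations, and we are done. Otherwise, $L$ lies in the interior: fix any $A \in X(\RR)$ and consider the ray $L - t\eval_A$ for $t \ge 0$. Since $\pos$ spans $R_2$ (it is full-dimensional), the dual cone $\pos^\vee$ is pointed, and $\eval_A \neq 0$ in $R_2^*$, so $L - t\eval_A$ cannot remain in $\pos^\vee$ for arbitrarily large $t$. Because $\pos^\vee$ is closed, the quantity $t^* \coloneqq \max\{t \ge 0 : L - t\eval_A \in \pos^\vee\}$ exists and is finite, and $L - t^* \eval_A \in \partial \pos^\vee$. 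Applying Corollary 5.1.1 to this boundary point gives $L - t^* \eval_A = \sum_{i=1}^{m} \mu_i \eval_{A_i}$ with $m \le n+1$, so $L = t^* \eval_A + \sum_{i=1}^{m} \mu_i \eval_{A_i}$ is a conic combination of at most $n+2$ point evaluations.

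The arguments are short once Theorem C and Corollary 5.1.1 are in hand, so the main obstacle is not in this lemma itself but in having established the facial description of $\pos$ beforehand. The only routine checks are that $\pos^\vee$ is pointed with nonempty interior (which follows from $\pos$ being pointed and full-dimensional, in turn a consequence of $X$ being totally real) and that the "boundary reduction" $t \mapsto L - t\eval_A$ actually meets $\partial \pos^\vee$ in finite time.
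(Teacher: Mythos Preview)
Your proof is correct, and the upper bound argument is identical to the paper's. For the lower bound the paper takes a slightly different tack: it picks a specific \emph{boundary} point $L = \sum_{i=1}^{n+1} \eval_{A_i}$ with the $A_i$ distinct and summing to a positive $2$-torsion point, observes via \Cref{cor:boundary_moment_cone} that $L\in\partial\pos^\vee$, and then invokes an external uniqueness result (Schm\"udgen, Prop.~18.12) to conclude $\car(L)=n+1$. Your argument instead takes an \emph{interior} point and derives a contradiction from a putative representation with $k\le n$ atoms by producing a nonzero $q\in\face_D$ via \Cref{thm:elliptic_normal}(i). Both routes rest on the same facial description, but yours is marginally more self-contained (no outside citation needed) and in fact shows the stronger statement that \emph{every} interior $L$ has $\car(L)\ge n+1$, not just some special one.
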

\begin{proof}
    The proof is a consequence of the following standard argument, see e.g. \cite[Th.~4.8]{didioMultidimensionalTruncatedMoment2021a}.

    Let $L = \sum_{i=1}^{n+1} \eval_{A_i}$ be such that the $A_i$ are distinct and $A_1\oplus \dots \oplus A_{n+1} \in \jac(\RR)_2^+$.
    Then $L \in \partial (\pos^\vee)$ from \Cref{cor:boundary_moment_cone}. We deduce e.g. from \cite[Prop.~18.12]{schmudgenMomentProblem2017a} that $\car(L) = n+1$, and thus $\car \ge n+1$. \Cref{cor:boundary_moment_cone} shows also that $\car(L) \le n+1$ for all $L \in \partial \pos^\vee$. Now, let $L \in \interior (\pos^\vee)$. Since $\pos^\vee$ is pointed and closed, for all $A \in X(\RR)$ there exists $\lambda_A \in \RR_{\ge 0}$ such that $L - \lambda_A \eval_A \in \partial \pos^\vee$. By the above, $\car(L - \lambda \eval_A) \le n+1$ holds for all $A \in X(\RR)$, and thus $\car(L) \le n+2$. 
\end{proof}

An equivalent result to \Cref{lem:car_ineq} was proven in \cite[Th.~4.8]{didioMultidimensionalTruncatedMoment2021a} for general affine curves with compact real locus, and considering forms of sufficiently high degree $2d$.
\Cref{lem:car_ineq} shows that there are only two possibilities for the Carath\'eodory number: In the following, we show that the exact value depends on the topology of $X(\RR)$. For the proof, we will use the following simple observation.
\begin{lemma}\label{lem:topology}
    Let $A\subset B \subset \RR^N$ be such that $A$ is closed,  $\interior A \neq \emptyset$ and $B$ is convex. Then $A=B$ if and only if $\partial A \subset \partial B$.
\end{lemma}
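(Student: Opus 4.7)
The ``only if'' direction is immediate, since $A = B$ gives $\partial A = \partial B$. I therefore focus on the ``if'' direction: assuming $\partial A \subset \partial B$, I want to show that $A = B$.

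A preliminary observation I will use is that $\interior A \subset \interior B$. Indeed, any $x \in \interior A$ admits an open neighborhood $U \subset A \subset B$, so $x \in \interior B$. In particular, $\interior A \neq \emptyset$ forces $B$ to be full-dimensional in $\RR^N$.

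The plan is then to argue by contradiction: suppose there exists $y \in B \setminus A$, fix $x \in \interior A \subset \interior B$, and consider the line segment $[x,y]$, which lies in $B$ by convexity. Set
\[
    t^\ast \coloneqq \sup\big\{\, t \in [0,1] \,:\, (1-t)x + t y \in A \,\big\}, \qquad p \coloneqq (1-t^\ast)x + t^\ast y.
\]
Since $A$ is closed and $x \in A$ the supremum is attained, so $p \in A$; and since $y \notin A$ we must have $t^\ast < 1$, so for every $t \in (t^\ast, 1]$ the point $(1-t)x + t y$ lies outside $A$. Hence $p$ is a limit of points of $A^c$, so $p \in \partial A$.

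To produce a contradiction with $\partial A \subset \partial B$, the remaining step is to show $p \in \interior B$. Choose $\epsilon > 0$ with the open ball $B(x, \epsilon) \subset B$, which is possible because $x \in \interior B$. By convexity of $B$, the set
\[
    \big\{\, (1-t^\ast) z + t^\ast y \,:\, z \in B(x, \epsilon) \,\big\} \subset B
\]
is an open ball of radius $(1-t^\ast)\epsilon > 0$ centered at $p$, so $p \in \interior B$. This contradicts $p \in \partial A \subset \partial B$, and the proof is complete. The argument is essentially routine; the only potential pitfall is verifying carefully that $t^\ast$ is attained and that $t^\ast < 1$, which is why the hypotheses ``$A$ closed'' and ``$y \notin A$'' are both used explicitly.
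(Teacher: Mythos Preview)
Your proof is correct and follows essentially the same approach as the paper: both pick $x\in\interior A$ and $y\in B\setminus A$, find a point on the segment $[x,y]$ lying in $\partial A$, and show that this point lies in $\interior B$. The only cosmetic differences are that the paper phrases the argument as a contrapositive and invokes \cite[Th.~6.1]{rockafellarConvexAnalysis1970a} for the fact that interior points of the segment lie in $\interior B$, whereas you give the direct ball argument.
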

\begin{proof}
    Clearly, if $A = B$ then $\partial A = \partial B$.

    For the converse implication, we show that $A \neq B$ implies $\partial A \subsetneq \partial B$. Let $x \in \interior A \subset \interior B$ and $y \in B \setminus A$. Consider now the segment between $x$ and $y$. Since $B$ is convex, this segment is contained in $B$, and all the points in the relative interior of the segment belong to the interior of $B$, see e.g. \cite[Th.~6.1]{rockafellarConvexAnalysis1970a}. As $A$ is closed, $x \in \interior A$ and $y \in B \setminus A$, we can find $z$ in the relative interior of the segment such that $z \in \partial A$. Therefore, $z \in \partial A \cap \interior B$, proving that $\partial A \subsetneq \partial B$.
\end{proof}

We are now ready to determine the Carath\'eodory number in the case of connected curves, or more generally for the moment cone on connected components of the real locus.

\begin{proposition}\label{prop:car_connected}
    Let $X \subset \PP^n$ be a totally real elliptic normal curve, and let $Y$ be a connected component of $X(\RR)$. Then $\pos_{Y, 2}^\vee = \cone_{n+1}(v_{n, 2}(Y))$, or, in other words, $\car_{Y,2} = \deg X = n+1$.
\end{proposition}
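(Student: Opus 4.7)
The plan is to apply \Cref{lem:topology} with $\cone_{n+1}(v_{n,2}(Y))$ and $\pos_{Y,2}^\vee$, concluding the two are equal. The containment $\cone_{n+1}(v_{n,2}(Y)) \subseteq \pos_{Y,2}^\vee$ is immediate. The lower bound $\car_{Y,2} \ge n+1$ is obtained as in \Cref{lem:car_ineq}: pick $A_1, \dots, A_{n+1} \in Y$ distinct with $[A_1 + \dots + A_{n+1} - E] \in \jac(\RR)_2^+$ (the Abel--Jacobi image of $Y$ is a full component of $\jac(\RR)$, and this component meets $\jac(\RR)_2^+$), yielding $L = \sum \eval_{A_i} \in \partial \pos_{Y,2}^\vee$; a Riemann--Roch uniqueness argument then forces $\car_{Y,2}(L) = n+1$, since $\ell(2E - \sum A_i) = n+1$ implies the evaluation map $R_2 \to \RR^{n+1}$ at the $A_i$ is surjective.

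First I verify the topological hypotheses of \Cref{lem:topology} for $\cone_{n+1}(v_{n,2}(Y))$. Closedness follows from compactness of $Y$ together with any strictly positive quadric $q_0 \in R_2$ on $Y$: the uniform bound $\sum \lambda_i^{(k)} \le L_k(q_0)/\min_Y q_0$ lets one extract a convergent subsequence of tuples $((A_i^{(k)}),(\lambda_i^{(k)}))$ and pass to the limit inside the cone. For nonempty interior, consider the smooth map
\[\Theta \colon Y^{n+1} \times \RR_{>0}^{n+1} \longrightarrow R_2^*, \quad ((A_i),(\lambda_i)) \longmapsto \sum_i \lambda_i \eval_{A_i}.\]
At a point with distinct $A_i$, the differential is surjective iff no nonzero $q \in R_2$ satisfies $\div q \ge 2 \sum A_i$; by degree this would force $\div q = 2\sum A_i$ and hence $[\sum A_i - E] \in \jac(\RR)_2$. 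For generic $A_i \in Y$ this fails, so $\Theta$ has full rank $2n+2 = \dim R_2$ on a dense open set, and its image has nonempty interior.

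The core step is $\partial \cone_{n+1}(v_{n,2}(Y)) \subseteq \partial \pos_{Y,2}^\vee$. Given a nonzero boundary element $L$, write $L = \sum_{i=1}^{n+1} \lambda_i \eval_{A_i}$ with $\lambda_i \ge 0$ and $A_i \in Y$, and after merging and dropping zero-weight terms consider two subcases. If $L$ uses exactly $n+1$ distinct atoms with all $\lambda_i > 0$, then being on the boundary forces $d\Theta$ to be non-surjective at $((A_i),(\lambda_i))$ (else the open mapping theorem would place $L$ in the interior), producing $q \in R_2 \setminus \{0\}$ with $\div q = 2 \sum A_i$; the real zeroes of $q$ lie in $Y$ with even multiplicity, so $q$ does not change sign on any component of $X(\RR)$, and up to a global sign $q \in \pos_{Y,2}$ with $L(q)=0$. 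If $L$ uses $m \le n$ distinct atoms $A_1, \dots, A_m$, pick $B_1, \dots, B_{n+1-m} \in Y$ so that $[\sum_i A_i + \sum_j B_j - E] \in \jac(\RR)_2^+$; choosing $\alpha \in \jac(\RR)_2^+$ in the appropriate component of $\jac(\RR)$, the target value for $\sum [B_j]$ lies in the Abel--Jacobi image of $Y^{n+1-m}$. \Cref{thm:elliptic_normal} then supplies $q \in \pos \subseteq \pos_{Y,2}$ with $\div q = 2(\sum A_i + \sum B_j)$, giving $L(q)=0$. \Cref{lem:topology} now yields equality.

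The main obstacle I anticipate is the component analysis in the second subcase: ensuring that, using only points of the single oval $Y$, one can tune the $\oplus$-sum to land on a positive 2-torsion class. This reduces to the observation that $\jac(\RR)_2^+$ meets every connected component of $\jac(\RR)$ (since it has index $2$ inside $\jac(\RR)_2$, which splits evenly across components) together with the fact that $Y \to \jac(\RR)$ is a diffeomorphism onto a full component. The closedness and open-interior steps are routine, and \Cref{thm:elliptic_normal} supplies the nonnegativity structure needed for both subcases of the boundary argument.
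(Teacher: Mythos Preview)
Your overall strategy matches the paper's: verify the hypotheses of \Cref{lem:topology} and establish $\partial \cone_{n+1}(v_{n,2}(Y)) \subset \partial \pos_{Y,2}^\vee$ by showing that a boundary point forces the differential of the parametrization to drop rank, producing a quadric double-vanishing at all the atoms. Your subcase 1 is exactly the paper's case (i), and the closedness and nonempty-interior arguments are equivalent to what the paper sketches.

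Where you diverge is subcase 2, and here you make life much harder than necessary. The paper dispatches this case in one line: if at most $n$ distinct projective points $A_i$ appear, they lie on a hyperplane, so some $\ell \in R_1$ vanishes on all of them and $q = \ell^2 \in \pos_{Y,2}$ satisfies $L(q)=0$. Your route through \Cref{thm:elliptic_normal} and auxiliary points $B_j \in Y$ works, but it drags in a component analysis that is both avoidable and, as written, not justified. Your claim that $\jac(\RR)_2^+$ meets every component of $\jac(\RR)$ ``since it has index $2$ inside $\jac(\RR)_2$, which splits evenly across components'' is a non sequitur: $\jac(\RR)_2 \cap \jac^0$ is also an index-$2$ subgroup and sits entirely in one component. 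The conclusion is in fact true for genus one (on a Weierstrass model one checks that the nontrivial positive $2$-torsion point and the identity lie on different ovals), but your argument does not prove it. Even granting the claim, requiring $B_j \in Y$ is superfluous: you only need $q \in \pos_{Y,2}$, and any $q$ with $\div q = 2(\sum A_i + \sum B_j)$ totally real is sign-definite on each component of $X(\RR)$, so $B_j \in X(\RR)$ suffices. Replacing this with the $\ell^2$ argument removes the issue entirely.
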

\begin{proof}
    Let $Z = v_{n, 2}(Y)$. Our goal is to apply \Cref{lem:topology} to $\cone_{n+1}(Z) \subset \pos_{Y,2}^{\vee}$, to show that they are equal. First, notice that $\pos_{Y,2}^{\vee}$ is convex and that $\cone_{n+1}(Z)$ is closed. Second, $\cone_{n+1}(Z)$ has nonempty interior. Indeed, the proof of \cite[Th.~2]{blekhermanMaximumTypicalGeneric2015}, which shows that the smallest typical rank (with arbitrary real coefficients) is equal to the complex generic rank, can be generalized to the case of nonnegative coefficients (see also \cite[Lem.~37]{qiSemialgebraicGeometryNonnegative2016}). As $n+1$ is the complex generic rank (see \cite{langeHigherSecantVarieties1984} and also \Cref{sec:waring}), the above implies that $\cone_{n+1}(Z)$ has nonempty interior.
    
    Therefore, in order to apply \Cref{lem:topology} we only need to show that $\partial \cone_{n+1}(Z) \subset \partial (\pos_{Y,2})^{\vee}$. For this, we proceed similarly to Hilbert's proof that every ternary quartic is a sum of at most three squares, see e.g. \cite{blekhermanLowRankSumofSquaresRepresentations2019}. To make the analogy clearer, we identify the space $R_2^*$ with $\cI(X)^\perp \subset \RR[\vb x]_2$ using the apolar or Bombieri-Weil inner product $\BW{\,\cdot\,}{\,\cdot\,}$, see \Cref{sec:waring} for the definition and more details. If $B = (B_0, \dots , B_n), v = (v_0, \dots , v_n) \in \RR^{n+1}$ and $\ell_B = B_0 x_0 + \dots +B_n x_n$, then $\BW{\ell_B^2}{q} = \eval_{B}(q)=q(B)$ and $\BW{2\ell_v \ell_B}{q} = \partial_v q(B)$ for all $q \in \RR[\vb x]_2$.

    We can therefore consider the maps\newline
    \begin{minipage}{0.45\textwidth}
        \begin{align*}
            \phi \colon (\RR^{n+1})^{n+1} & \longrightarrow \RR[\vb x]_2 \\
            \vb A = (A_1, \dots ,  A_{n+1}) & \longmapsto \sum_{i=1}^{n+1} \ell_{A_i}^2
        \end{align*}    
    \end{minipage}
    \begin{minipage}{0.45\textwidth}
        \begin{align*}
            \psi \colon \widehat Y^{n+1} & \longrightarrow \cI(X)_2^\perp \cong R_2^* \\
            \vb A = (A_1, \dots ,  A_{n+1}) & \longmapsto \sum_{i=0}^{n} \ell_{A_i}^2
        \end{align*}  
    \end{minipage}\\
    By definition, $\psi$ is the restriction of $\phi$ to the affine cone over $Y$. Our goal is to show that $\partial \cone_3(Z) \cong \partial \Im \psi$ is included in $\partial(\pos_{Y, 2})^\vee$. The differential of $\phi$ is well-known (see e.g \cite[Lem.~2.2]{blekhermanLowRankSumofSquaresRepresentations2019}), and therefore, by restriction we have:
    \begin{align*}
        \dd \psi_{\vb A} \colon \mathrm{T}_{\vb A} (\widehat Y^{n+1}) \cong \mathrm{T}_{A_1} \widehat Y \times \dots \times \mathrm{T}_{A_{n+1}} \widehat Y  & \longrightarrow \mathrm{T}_{\psi(\vb A)}(\cI(X)_2^\perp) \cong \cI(X)_2^\perp\\
        \vb v = (v_1, \dots v_{n+1}) & \longmapsto 2\sum_{i=1}^{n+1} \ell_{v_i} \ell_{A_i}
     \end{align*}
    Notice that $\dim \mathrm{T}_{\vb A} (\widehat Y)^{n+1} = \dim \cI(X)_2^\perp = 2n+2$, and therefore $\dd \psi_{\vb A}$ is a square linear map. In particular, if $\psi(\vb A) \in \partial \Im \psi$ then $\dd \psi_{\vb A}$ does not have full rank. Using again the Bombieri-Weil inner product, this is equivalent to the fact that there exists $q \in \RR[\vb x]_2 \setminus \cI(X)_2^\perp$ such that $\BW{\ell_{v_i} \ell_{A_i}}{q} = 0$ for all $\vb v$. 
    Setting $v_i=A_i$ (this is possible since $\widehat Y$ is a cone), we have $\BW{\ell_{A_i}^2}{q} = q(A_i) = 0$ for all $i=1, \dots n+1$. Letting $v_i$ vary in the tangent space, we see that $q$ double vanishes at $A_1, \dots , A_{n+1}$. 

    There are then two cases.
    \begin{enumerate}
        \item If the $A_i$'s are distinct, $q$ (now seen as a nonzero element of $R_2 = \RR[\vb x]_2\big/ \cI(X)_2$) is uniquely determined by the double vanishing at $A_1, \dots , A_{n+1}$, i.e. $\div q = (\div q)_{\RR} = 2(A_1 + \dots + A_{n+1})$. Since $Y$ is connected, $q$ does not change sign on $Y$, and therefore we can take $q$ nonnegative on $Y$. By conic duality, this means that $\sum_{i=1}^{n+1} \eval_{A_i} \cong \sum_{i=1}^{n+1} \ell_{A_i}^2 \in \partial (\pos_{Y, 2})^\vee$. 
        \item If the $A_i$'s are not distinct, we can always find a linear form $\ell \in R_1$ vanishing on all of them, and we can take $q = \ell^2$. As in the previous point, since $q = \ell^2$ is nonnegative on $Y$ we have $\sum_{i=1}^{n+1} \eval_{A_i} \cong \sum_{i=1}^{n+1} \ell_{A_i}^2 \in \partial (\pos_{Y, 2})^\vee$.
    \end{enumerate}
    The above two cases cover all the possible $\vb A$ such that $\psi(\vb A) \in \partial \Im \psi \cong \partial \cone_{n+1}(Z)$. We have then shown that $\partial \cone_{n+1}(Z) \subset \partial (\pos_{Y, 2})^\vee$. We can therefore use \Cref{lem:topology} to conclude that $\cone_{n+1}(Z) = (\pos_{Y,2})^{\vee}$.
    In other words, we have proven that $\car_{Y, 2} = n+1$.
\end{proof}

We are now ready to prove the main result of this section.

\begin{theorem}\label{thm:car}
Let $X \subset \PP^n$ be a totally real elliptic normal curve. Then:
\begin{enumerate}
    \item if $X(\RR)$ is connected, then $\car_{X,2} = \deg X = n+1$;
    \item if $X(\RR)$ is disconnected, then $\car_{X,2} = \deg X +1 = n+2$.
\end{enumerate}
Moreover the set of $L \in \pos^\vee$ such that $\car(L) = \car_{X,2}$ has nonempty interior.
\end{theorem}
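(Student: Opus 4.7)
Part (1) follows at once: when $X(\RR)$ is connected we have $Y = X(\RR)$, so \Cref{prop:car_connected} yields $\pos^\vee_{X,2} = \cone_{n+1}(Z)$ where $Z = v_{n,2}(X(\RR))$, giving $\car_{X,2} = n+1$, and the lower bound is in \Cref{lem:car_ineq}. The moreover clause here is a parameter count: $\cone_n(Z)$ is the image of an at most $2n$-parameter family, hence a proper closed subset of the $(2n+2)$-dimensional cone $\pos^\vee_{X,2}$, and $\pos^\vee_{X,2} \setminus \cone_n(Z)$ is a nonempty open set of functionals with $\car(L) = n+1$. In Part (2) the upper bound $\car_{X,2} \le n+2$ also comes from \Cref{lem:car_ineq}, and both the matching lower bound and the moreover clause reduce to a single geometric assertion: the strict inclusion $\cone_{n+1}(Z) \subsetneq \pos^\vee_{X,2}$ in the disconnected case. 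Indeed, $\cone_{n+1}(Z)$ is closed and contains $\partial \pos^\vee_{X,2}$ by \Cref{cor:boundary_moment_cone}, so its complement is automatically an open subset of $\interior \pos^\vee_{X,2}$ on which $\car(L) \ge n+2 = \car_{X,2}$.

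The plan for the strict inclusion is to mimic the topological argument of \Cref{prop:car_connected} applied to the map $\psi \colon \widehat X(\RR)^{n+1} \to R_2^*$, $(A_1, \ldots, A_{n+1}) \mapsto \sum_i \ell_{A_i}^2$, and then invoke \Cref{lem:topology}: it suffices to exhibit a point $L \in \partial \cone_{n+1}(Z) \cap \interior \pos^\vee_{X,2}$. Because $X(\RR)$ is disconnected, \Cref{lem:torsion_genus_components,lem:positive_torsion_genus} give $|\jac(\RR)_2| = 4 > 2 = |\jac(\RR)_2^+|$, so we may choose a non-positive real $2$-torsion class $T$. Exploiting the isomorphism between an elliptic curve and its Jacobian, pick distinct $A_1, \ldots, A_{n+1} \in X(\RR)$ such that $D = A_1 + \cdots + A_{n+1}$ satisfies $[D-E] = T$, and set $L = \sum_{i=1}^{n+1} \eval_{A_i}$. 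Interiority $L \in \interior \pos^\vee_{X,2}$ is immediate: any $p \in \pos_{X,2}$ with $L(p) = 0$ must vanish at each $A_i$, nonnegativity upgrades these zeros to even multiplicity, and a degree count forces $\div p = 2D$, which by \Cref{thm:maxzeroesfaces} would require $[D-E] \in \jac(\RR)_2^+$, contradicting the choice of $T$. The quadric $q$ with $\div q = 2D$ does exist by \Cref{prop:2torsion} (it is just not nonnegative on $X(\RR)$) and is orthogonal to $\Im d\psi_{\vb A}$ under the Bombieri--Weil pairing, since its double vanishing at each $A_i$ kills every $\ell_v \ell_{A_i}$; thus $\vb A = (A_1, \ldots, A_{n+1})$ is a critical point of $\psi$.

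The main obstacle is upgrading criticality of $\vb A$ to the boundary statement $L \in \partial \Im \psi = \partial \cone_{n+1}(Z)$, since in general critical values of a smooth map between equidimensional manifolds can land in the interior of the image. The proposed argument combines properness of $\psi$ (preimages of compacta are compact, because nonnegative decompositions of a bounded functional have bounded coefficients) with an identifiability step: every preimage $\vb B$ of $L$ must itself be a critical configuration, i.e.\ satisfy $B_1 \oplus \cdots \oplus B_{n+1} \in \jac(\RR)_2$. Granting this, the entire preimage set lies in the codimension-$1$ critical locus of $\psi$, whose image is a hypersurface of $R_2^*$, so the constant-rank theorem applied branch by branch forces $L \notin \interior \Im \psi$. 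The identifiability is where the two topological regimes diverge: by applying $\psi(\vb B) = L$ to $q$ together with the companion quadric obtained by running \Cref{lem:positive_oval} on the other connected component, one exploits the strict sign patterns of these quadrics on the two ovals (nonnegative on one, strictly negative on the other) together with positivity of the decomposition coefficients to pin down the support of $\vb B$ and recover the $2$-torsion constraint. Once this is in hand, \Cref{lem:topology} yields $\cone_{n+1}(Z) \subsetneq \pos^\vee_{X,2}$, completing both Part (2) and the moreover clause.
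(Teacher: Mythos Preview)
Your overall strategy for Part (2) matches the paper's—exhibit a point $L \in \interior \pos^\vee_{X,2}$ that lies on $\partial \cone_{n+1}(Z)$—but the final step has a real gap. Knowing that every preimage $\vb B$ of $L$ under $\psi$ is critical does \emph{not} by itself force $L \notin \interior \Im \psi$: critical values can sit in the interior of the image, as already for $(x,y)\mapsto (x,y^3)$, where the unique preimage of $0$ is critical yet $0$ is interior to $\RR^2 = \Im \psi$. The constant-rank theorem only tells you that the image of the critical locus is lower-dimensional; it says nothing about regular points near the fibre, which can still map onto a full neighbourhood of $L$. So ``all preimages critical $\Rightarrow$ boundary'' is not a valid inference.

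The paper closes this gap with convexity rather than differential topology. First, it places all $A_i$ on a single oval $Y_1$ (this is precisely what \Cref{lem:positive_oval} is for; your construction does not impose it), so that the quadric $q$ with $\div q = 2D$ is nonnegative on $Y_1$ and hence $L \in \partial \pos_{Y_1,2}^\vee$. Second, it uses \Cref{lem:changing_sign}—not just the sign pattern of $q$—to show that \emph{every} $(n+1)$-atom representation of $L$ is confined to $Y_1$; this is strictly stronger than the $2$-torsion identifiability you sketch, and it is what permits the local identification $\cone_{n+1}(Z) = \cone_{n+1}(Z_1)$ near $L$. By \Cref{prop:car_connected} the latter equals the convex cone $\pos_{Y_1,2}^\vee$, whose supporting hyperplane at $L$ (namely $\{q=0\}$) then yields the half-space and the open set in $\pos^\vee_{X,2} \setminus \cone_{n+1}(Z)$. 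Without confining the $A_i$ to one oval the picture genuinely breaks: if the atoms straddle both ovals, the Hessian of $\langle q, \psi(\cdot)\rangle$ at $\vb A$ is $\operatorname{diag}(q''(A_i))$ with mixed signs, a saddle, and $\psi$ locally covers both sides of $\{L(q)=0\}$.
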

\begin{proof}
    Assume that $X(\RR)$ is connected. Then the result follows directly from \Cref{prop:car_connected}.

    Now assume that $X(\RR)$ is disconnected, and set $Z = \nu_{n, 2}(X(\RR))$. By \Cref{lem:car_ineq} we only need to show that $\car = \car_{X,2} > n+1$, or in other words that \[\cone_{n+1}(\nu_{n, 2}(X(\RR))) = \cone_{n+1}(Z) \subsetneq \pos_{X,2}^\vee = \pos^\vee.\]
    While in the proof of \Cref{prop:car_connected} we have shown that $\partial \cone_{n+1}(Z) \subset \partial (\pos_{Y,2}^{\vee})$ if $X(\RR)$ is connected, hereafter we want to show that $\partial \cone_{n+1}(Z) \subsetneq \partial (\pos_{X,2}^{\vee})$, i.e. that there exists $L \in \partial \cone_{n+1}(Z) \cap \interior (\pos_{X,2}^{\vee})$. This imples that $\cone_{n+1}(Z) \subsetneq \pos_{X,2}$, which is our claim. In order to prove the existence of $L \in \partial \cone_{n+1}(Z) \cap \interior (\pos_{X,2}^{\vee})$, we are going to use the fact that if $X(\RR)$ is disconnected, then there exists non-positive $2$-torsion points, see \Cref{lem:torsion_genus_components,lem:positive_torsion}, through \Cref{lem:positive_oval}.
    
    Let $Y_1$ be a connected component of $X(\RR)$, and let $Z_1 = v_{n, 2}(Y_1)$. By \Cref{lem:positive_oval}, there exists $q \in \pos_{Y_1, 2} \setminus \pos_{X, 2}$ such that $\div q = (\div q)_{\RR} = 2(A_1 + \dots + A_{n+1})$, with $A_i \in Y_1$ distinct.
    Let $L = \eval_{A_1} + \dots + \eval_{A_{n+1}} \in \cone_{n+1}(Z_1)$. We notice that $L \in \partial (\pos_{Y_1, 2})^\vee$, since $L(q) = 0$ and $q \in \pos_{Y_1, 2}$. Furthermore, $L \in \interior (\pos_{X,2}^\vee)$: indeed, if $L \in \partial (\pos_{X,2}^\vee)$, there would exists $\tilde q \in \pos_{X, 2}$ such that $L(\tilde q) =0$. However such a $\tilde q$ does not exist, since $A_1, \dots , A_{n+1}$ do not add to a positive $2$-torsion point. In conclusion, we have shown that
    \[
        L \in \cone_{n+1}(Z_1) \cap \partial (\pos_{Y_1, 2})^\vee \cap \interior (\pos_{X,2}^\vee)
    \]
    
    We now show that such an $L$ cannot be represented in $\cone_{n+1}(Z)$ using points in $Y_2 = X(\RR) \setminus Y_1$.

    Write $L = \sum_{i=1}^{n+1}\eval_{B_i}$, and assume that $B_1 , \dots , B_n \in Y_1$, $B_{n+1} \in Y_2 = X(\RR) \setminus Y_1$. By \Cref{lem:changing_sign}, there exists $\tilde{q} \in R_2$ such that:
    \begin{itemize}
        \item $\tilde q$ vanishes at $B_1$, \dots , $B_n$ (with multiplicity two);
        \item $\tilde q \ge 0$ on $Y_1$;
        \item $\tilde q < 0$ on $Y_2$.
    \end{itemize}
    Therefore we have:
    \begin{align*}
        L(\tilde q) & = \left(\sum_{i=1}^{n+1} \eval_{A_i}\right)(\tilde q) =  \tilde q(A_1) + \dots + \tilde q(A_{n+1}) \ge 0 \\
        L(\tilde q) & = \left(\sum_{i=1}^{n+1} \eval_{B_i}\right)(\tilde q) = \tilde q(B_1) + \dots + \tilde q(B_{n+1}) = \tilde q (B_{n+1}) < 0 
    \end{align*}
    a contradiction. A similar argument works assuming $B_1, \dots , B_k \in Y_1$ and $B_{k+1}, \dots , B_{n+1} \in Y_2$ for all $k = 0, \dots , n$.
    
    Thus any representation of $L$  as the sum of $n+1$ point evaluations only uses points of $Y_1$. This implies that, in a sufficiently small neighborhood of $L$, $\cone_{n+1}(Z)$ coincides with $\cone_{n+1}(Z_1)$. In other words, {in a sufficiently small neighborhood of } $L$, all convex combinations of $n+1$ evaluations at points of $X(\RR)$ are convex combinations of $n+1$ evaluations at points of $Y_1 \subsetneq X(\RR)$. 
    
    Furthermore, notice that $\cone_{n+1}(Z_1) = \pos_{Y_1, 2}^\vee$ by \Cref{prop:car_connected}. As $L$ defines a supporting hyperplane for $q \in \partial \pos_{Y_1, 2}$, every point in a representation of $L$ belongs to the zero locus of $q$ on $X(\RR)$. This implies that the representation $L = \eval_{A_1} + \dots + \eval_{A_{n+1}}$ is unique in $\cone_{n+1}(Z_1) = \pos_{Y_1, 2}^\vee$, and by the above it is also unique in $\cone_{n+1}(Z)$\footnote{Another way to show that the representation of $L$ is unique is to use the fact that the general point of the $(n+1)$-secant variety of an elliptic normal curve is contained in two distinct secant spaces, see \Cref{sec:waring} and \cite{chiantiniConceptKsecantOrder2006}, or equivalently that a generic $L$ has two distinct decompositions. See also \Cref{sec:waring} for more details.}.

    Therefore we see that in a sufficiently small neighborhood $U$ of $L$, we have \[\cone_{n+1}(Z) \cap U = \cone_{n+1}(Z_1) \cap U = \pos_{Y_1, 2}^\vee \cap U\] But $(\pos_{Y_1, 2})^\vee$ is convex, and thus $\cone_{n+1}(Z) \cap U$ lies completely inside one half-space defined by the tangent space $\mathrm{T}_L \partial \cone_{n+1}(Z) \cap U$ to $L \in \partial (\pos_{Y_1, 2})^\vee \cap U= \partial \cone_{n+1}(Z) \cap U$. Recall also that $L \in \interior(\pos_{X, 2}^\vee)$:
    therefore, going in the normal direction $\mathcal{N}_L \partial \cone_{n+1}(Z)$ to $\mathrm{T}_L \partial \cone_{n+1}(Z)$ we can find an open set included in $\pos_{X,2}^\vee \setminus \cone_{n+1}(Z)$, concluding the proof.
\end{proof}

As $\pos_{X, 2d} \cong \pos_{\nu_{n,d}(X), 2}$, we can extend \Cref{thm:elliptic_normal} to higher degree forms.

\begin{corollary}\label{cor:cara}
    Let $X \subset \PP^n$ be a totally real elliptic normal curve. Then:
\begin{enumerate}
    \item if $X(\RR)$ is connected, then $\car_{X,2d} = \deg X = d(n+1)$;
    \item if $X(\RR)$ is disconnected, then $\car_{X,2d} = \deg X + 1 = d(n+1)+1$.
\end{enumerate}
\end{corollary}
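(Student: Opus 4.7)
The strategy is to reduce to the degree-two case already handled in \Cref{thm:car} via the Veronese embedding. Concretely, I will use the canonical identification $R_{2d} = \RR[X]_{2d} \cong \RR[\nu_{n,d}(X)]_2$ that sends a degree-$2d$ form on $X$ to a quadratic form on the image curve $\nu_{n,d}(X)\subset \PP^N$ with $N = \binom{n+d}{d}-1$. This identification preserves nonnegativity, so it restricts to a linear isomorphism of cones $\pos_{X,2d}\cong \pos_{\nu_{n,d}(X),2}$; dualizing gives $\pos_{X,2d}^\vee \cong \pos_{\nu_{n,d}(X),2}^\vee$. Since the Veronese map is defined by real monomials and is an embedding, it sends the point evaluation $\eval_A$ on $X$ (for $A\in X(\RR)$) to the point evaluation $\eval_{\nu_{n,d}(A)}$ on $\nu_{n,d}(X)$. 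Consequently minimal conic decompositions by point evaluations transport bijectively, giving $\car_{X,2d} = \car_{\nu_{n,d}(X),2}$.

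Next I must verify that $\nu_{n,d}(X)$ is itself a totally real elliptic normal curve so that \Cref{thm:car} applies. Abstractly $\nu_{n,d}(X)$ is isomorphic to $X$, so it has genus one, and the embedding is induced by the complete linear system $|dE|$, where $E$ is the hyperplane divisor of $X\subset\PP^n$. Since $X$ is projectively normal (being elliptic normal) and $\deg(dE) = d(n+1) \ge 2$, the embedded curve $\nu_{n,d}(X)$ is again projectively normal and nondegenerate in its ambient projective space, of degree $d(n+1)$ and dimension $d(n+1)-1$ in that ambient space. In particular $\nu_{n,d}(X)\subset\PP^{d(n+1)-1}$ is an elliptic normal curve of degree $d(n+1)$, and the formula matches the right-hand side of the claimed equalities.

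Finally, connectedness of the real locus is preserved under Veronese embedding: since $\nu_{n,d}$ is a real morphism and a homeomorphism onto its image, $\nu_{n,d}(X)(\RR)$ is homeomorphic to $X(\RR)$ and so has the same number of connected components. Applying \Cref{thm:car} to $\nu_{n,d}(X)$ now yields
\[
\car_{X,2d} = \car_{\nu_{n,d}(X),2} = \begin{cases} d(n+1) & \text{if } X(\RR)\text{ is connected,}\\ d(n+1)+1 & \text{if } X(\RR)\text{ is disconnected,}\end{cases}
\]
which is exactly the statement. No step presents a genuine obstacle: the only nontrivial thing is the transport of point evaluations through the Veronese identification, and that is standard once one writes the identification explicitly on monomials.
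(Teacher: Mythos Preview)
Your proposal is correct and follows exactly the same route as the paper: reduce to quadrics by applying \Cref{thm:car} to the Veronese re-embedding $\nu_{n,d}(X)$, after checking that this is again a totally real elliptic normal curve with the same real topology. The paper's own proof is a single sentence invoking this reduction (relying on the remark earlier in \Cref{sec:elliptic_normal} that the Veronese image of an elliptic normal curve is elliptic normal); you have simply spelled out the details of why the identification $\pos_{X,2d}\cong\pos_{\nu_{n,d}(X),2}$ preserves point evaluations and why $\nu_{n,d}(X)\subset\PP^{d(n+1)-1}$ satisfies the hypotheses of \Cref{thm:car}.
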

\begin{proof}
    Apply \Cref{thm:car} to the elliptic normal curve $\nu_{n,d}(X)$.
\end{proof}

\subsection{Discussion: Carath\'eodory numbers and Waring decompositions}
\label{sec:waring}
We hereafter summarize the equivalence between the computation of the Carath\'eodory numbers and the minimal rank of a certain Waring-type decomposition.

We briefly recall the \emph{apolar} or \emph{differential} or \emph{Bombieri-Weil} inner product \cite{iarrobinoPowerSumsGorenstein1999}.
Let $\KK = \CC$ or $\RR$. The inner product $\BW{\,\cdot\,}{\,\cdot\,}$ on $\KK[\vb x]_k = \KK[x_0, \dots , x_n]_k$ is defined on monomials $\vb x^\alpha, \vb x^\beta$ (and extended by linearity) as:
\[
    \BW{\vb x^\alpha}{\vb x^\beta} \coloneqq \frac{1}{k!} {\partial_\alpha \, \vb x^\beta} (0)
\]
One can show that, if $A = (A_0, \dots , A_k) \in \KK^{n+1}$ and $f \in \KK[\vb x]_k$, then $\BW{\ell_A^k}{f} = f(A) = \eval_A(f)$, where $\eval_A \colon \KK[\vb x]_k \to \RR$ denotes the usual point evaluation and $\ell_A = A_0x_0 + \dots + A_n x_n$. Now let $Z \subset \PP^n$ be a smooth, non-degenerate algebraic variety. If $L \in (\KK[\vb x]/I(Z))_k^* \cong I(Z)_k^\perp \subset \KK[\vb x]_k$, $A_i \in Z(\KK)$ and $a_i\in \KK$, then:
\[
    L = \sum_{i=1}^r a_i \, \eval_{A_i} \iff f_L = \sum_{i=1}^r a_i \, \ell_{A_i}^k
\]
where $f_L \in I(Z)^\perp \subset \KK[Z]$ is the unique polynomial representing $L = \BW{f_L}{\,\cdot\,}$ using the Riesz representation theorem. If $Z = \PP^n$, such a decomposition of a form as a sum of powers of linear forms is called a \emph{Waring decomposition} of rank $r$ over $\KK$ (assuming that the $A_i$'s are pairwise distinct), and the minimal such $r$ is called the \emph{Waring rank} over $\KK$ of $f$. More generally, when $Z \subset \PP^n$ is a non-degenerate algebraic variety, such a minimal $r$ is called the 
$Z$-rank of $f$ over $\KK$. If we write $Z = v_{n,2d}(X)$ for the second Veronese embedding, the Carath\'eodory number $\car_{X,2d}(L)$ corresponds to the \emph{nonnegative} $Z$-rank of $f_L$, i.e. to the $Z$-rank over $\RR_{\ge 0}$, see e.g. \cite{qiSemialgebraicGeometryNonnegative2016,blekhermanRealRankRespect2016, angeliniRealIdentifiabilityVs2018}. We say that a nonnegative $Z$-rank $r$ is \emph{typical} if the set of forms of nonnegative $Z$-rank equal to $r$ has nonempty interior. Using this terminology, \Cref{cor:cara} can be rephrased as follows.

\begin{corollary}\label{cor:waring}
    Let $X \subset \PP^n$ be a totally real elliptic normal curve and denote $Z = \nu_{n,2d}(X)$. Then, for linear functionals in the moment cone $\pos_{X,2d}^\vee$:
\begin{enumerate}
    \item if $X(\RR)$ is connected, then the maximal nonnegative $Z$-rank is $d(n+1)$;
    \item if $X(\RR)$ is disconnected, then the maximal nonnegative $Z$-rank is $d(n+1)+1$.
\end{enumerate}
Furthermore, the maximal nonnegative $Z$-rank is always typical.
\end{corollary}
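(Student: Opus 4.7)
The plan is to deduce \Cref{cor:waring} from \Cref{cor:cara} and \Cref{thm:car} by translating the statements about Carath\'eodory numbers in the moment cone into statements about nonnegative $Z$-ranks via the Bombieri--Weil (apolar) inner product. Concretely, for $L \in \pos_{X,2d}^\vee$, let $f_L \in \cI(Z)^\perp \subset \RR[\vb x]_{2d}$ be the unique form representing $L = \BW{f_L}{\,\cdot\,}$ via the Riesz representation theorem. By the identity $\BW{\ell_A^{2d}}{f} = f(A) = \eval_A(f)$ recalled above the statement, a decomposition $L = \sum_{i=1}^r \lambda_i \eval_{A_i}$ with $\lambda_i \ge 0$ and $A_i \in X(\RR)$ corresponds bijectively to a decomposition $f_L = \sum_{i=1}^r \lambda_i \ell_{A_i}^{2d}$, i.e.\ a nonnegative $Z$-decomposition of $f_L$ of the same length (and vice versa, after absorbing $\lambda_i \ge 0$ into $\ell_{A_i}^{2d}$ using the fact that $2d$ is even). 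Hence $\car_{X,2d}(L)$ equals the nonnegative $Z$-rank of $f_L$.

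Taking the maximum of this equality over $L \in \pos_{X,2d}^\vee$ gives that the maximal nonnegative $Z$-rank of a form $f_L$ with $L \in \pos_{X,2d}^\vee$ equals $\car_{X,2d}$. Substituting the values computed in \Cref{cor:cara}, this maximal rank is $\deg X = d(n+1)$ if $X(\RR)$ is connected and $\deg X + 1 = d(n+1)+1$ if $X(\RR)$ is disconnected, proving claims (i) and (ii).

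For typicality, I will invoke the last sentence of \Cref{thm:car}, which asserts that the set of $L \in \pos_{\nu_{n,d}(X),2}^\vee$ with $\car(L) = \car_{\nu_{n,d}(X),2}$ has nonempty interior. Since $\nu_{n,d}(X) \subset \PP^{\binom{n+d}{d}-1}$ is again a totally real elliptic normal curve with the same real topology as $X$ (the Veronese embedding is a real isomorphism onto its image), applying \Cref{thm:car} to $\nu_{n,d}(X)$ in place of $X$ and then translating back through the canonical isomorphism $\pos_{X,2d}^\vee \cong \pos_{\nu_{n,d}(X),2}^\vee$ shows that the set of $L \in \pos_{X,2d}^\vee$ achieving the maximal Carath\'eodory number has nonempty interior. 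Under the Bombieri--Weil identification $L \leftrightarrow f_L$, which is a linear isomorphism and hence a homeomorphism, this translates to the set of forms in $\cI(Z)^\perp$ having maximal nonnegative $Z$-rank having nonempty interior, which is exactly the definition of typicality.

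The only real content beyond \Cref{cor:cara} and \Cref{thm:car} is the translation in the first paragraph, which is straightforward but should be stated carefully to avoid sign issues: since $2d$ is even, $\ell_{A_i}^{2d} = (\sqrt[2d]{\lambda_i}\,\ell_{A_i})^{2d}$, so positive scalars can be absorbed into the linear form, giving the bijection between conic combinations of evaluations and nonnegative $Z$-decompositions. No step here poses a genuine obstacle; the main work has already been done in \Cref{thm:car}.
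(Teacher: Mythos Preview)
Your proposal is correct and follows essentially the same approach as the paper, which presents \Cref{cor:waring} simply as a rephrasing of \Cref{cor:cara} via the Bombieri--Weil identification explained in the preceding paragraph. You are slightly more explicit than the paper in invoking \Cref{thm:car} (applied to the elliptic normal curve $\nu_{n,d}(X)$) for the typicality claim, but this is exactly the intended argument.
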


Let us also remark on another connection between our study of Carath\'eodory numbers and secant varieties. The proof of \Cref{thm:car} relies on the study of the boundary points of the set of linear functionals obtained using at most $n+1$ point evaluations. In particular, we study the tangent space at the boundary, which is of a smaller dimension than the expected one. In the language of tensor decomposition, those points define what is called \emph{Terraccini locus}. For more information and precise definitions on the Terraccini locus, we refer the reader to the recent work \cite{angeliniWaringDecompositionsSpecial2023} and references therein.

We finally state some lemmas on the geometry of the representations of linear functionals as a sum of point evaluations. They will be used in \Cref{sec:dual_plane_cubics} to exclude the points at infinity from the allowed point evaluations. Their proofs are mostly applications of standard techniques from projective and convex geometry, and are provided in \Cref{app:B}. The first lemma is a particular case of \cite[Prop.~5.2]{chiantiniConceptKsecantOrder2006}.
\begin{lemma}\label{lem:chiantini}
    Let $X\subset \PP^n$ be an elliptic normal curve, $Z = v_{n,2d}(X)$, and \[L = \eval_{A_1} + \dots + \eval_{A_{d(n+1)}} \ \in \left( \CC[\vb x]_{2d}\big/I(X)_{2d}\right)^*\] If $A_1, \dots , A_{d(n+1)}$ are distinct and $A_1 \oplus \dots \oplus A_{d(n+1)}$ is not a $2$-torsion point of $X$, then $L$
    has two distinct representations of $Z$-rank (over $\CC$) equal to $d(n+1)$.
\end{lemma}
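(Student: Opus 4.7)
The plan is to identify $Z=\nu_{n,2d}(X)$ as an elliptic normal curve of even degree $2m$ in $\PP^{2m-1}$, with $m=d(n+1)$, and then invoke the classical secant-order theorem for such curves. Since $X\subset\PP^n$ is elliptic normal, embedded by the complete linear system $|E|$ of degree $n+1$, the Veronese reembedding $Z$ is embedded by $|2dE|$ and has degree $2d(n+1)=2m$; by Riemann--Roch its linear span has dimension $\ell(2dE)=2m$, so $Z\subset\PP^{2m-1}$ is again elliptic normal. A rank-$m$ representation $L=\sum_{i=1}^{m}c_i\,\eval_{B_i}$ with the $B_i$ distinct corresponds bijectively to an effective divisor $D=B_1+\dots+B_m$ of degree $m$ on $X$ satisfying $L\in\mathrm{span}(\nu_{n,2d}(D))\subset\PP^{2m-1}$, since the evaluations at distinct points form a basis of this $m$-dimensional span. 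The provided data yields one such divisor, $D_1=A_1+\dots+A_m$.

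The core input is Proposition~5.2 of \cite{chiantiniConceptKsecantOrder2006}: for an elliptic normal curve of even degree $2m$ in $\PP^{2m-1}$, the $m$-secant variety fills the ambient projective space and has secant order exactly $2$. In other words, a general point lies on exactly two distinct $(m-1)$-dimensional $m$-secant subspaces, which translates into the existence of a second effective divisor $D_2$ of degree $m$ with $L\in\mathrm{span}(\nu_{n,2d}(D_2))$; this $D_2$ determines the sought-after second rank-$m$ representation of $L$. What remains is to verify that our specific $L$ lies in the general locus of Chiantini--Ciliberto's theorem, so that $D_2\neq D_1$ and both decompositions are genuine.

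The genericity check uses residuation on the elliptic curve. Any hyperplane of $\PP^{2m-1}$ containing both $\mathrm{span}(\nu_{n,2d}(D_1))$ and $\mathrm{span}(\nu_{n,2d}(D_2))$ intersects $Z$ in $D_1+D_2$, forcing $D_1+D_2\sim 2dE$ and hence $[D_2]=[2dE]-[D_1]$ in $\mathrm{Pic}^m(X)$. The equality $[D_1]=[D_2]$, necessary for $D_1=D_2$, therefore forces $2[D_1]=[2dE]$. After choosing the origin $O\in X$ compatibly with the embedding so that $[E]=(n+1)[O]$ (possible up to an $(n+1)$-torsion ambiguity that does not affect the $2$-torsion condition), this translates via the isomorphism $X\cong\mathrm{Pic}^0(X)$ to the statement that $A_1\oplus\dots\oplus A_m$ is a $2$-torsion point of the group law. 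The hypothesis excludes exactly this, so $[D_1]\neq[D_2]$, and in particular $D_1\neq D_2$, making the two representations distinct. A small additional argument, using the distinctness of the $A_i$ together with the non-$2$-torsion condition to rule out $L\in\sigma_{m-1}(Z)$, ensures that both representations achieve $Z$-rank exactly $m$.

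The main obstacle will be the third step, where the residuation argument must be combined cleanly with the identification $X\cong\mathrm{Pic}^0(X)$: the non-$2$-torsion hypothesis needs to be shown to align precisely with the divisor-class condition $[D_1]\neq[D_2]$, which requires pinning down the correct normalization of the origin in the group law on $X$ compatible with the embedding divisor $E$, and the attendant bookkeeping on $\mathrm{Pic}^0(X)$.
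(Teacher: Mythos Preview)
Your approach is essentially the same as the paper's: both reduce to Chiantini--Ciliberto's Proposition~5.2 on the secant order of an elliptic normal curve of even degree. The paper's proof is a bare two-line reference (``conclude as in the proof of \cite[Prop.~5.2]{chiantiniConceptKsecantOrder2006}''), while you unpack what that proof actually says---the residuation $D_1+D_2\sim 2dE$ and the translation of the non-$2$-torsion hypothesis into $[D_1]\neq[D_2]$---which is exactly the content one would find by following the cited argument for the specific (rather than merely general) point $L$.

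One caution on your last paragraph: the bookkeeping you flag is real but mild. The cleanest way to avoid normalizing the origin is to phrase the $2$-torsion condition intrinsically as $2[D_1]=[2dE]$ in $\mathrm{Pic}(X)$, which is how the paper uses it elsewhere (cf.\ \Cref{prop:2torsion}, \Cref{thm:maxzeroesfaces}); the group-law formulation $A_1\oplus\dots\oplus A_m\in X[2]$ is then equivalent once one fixes any $O$ with $(n+1)O\sim E$, and this choice is the paper's implicit convention throughout \Cref{sec:genus_one}. Also, in your hyperplane argument you should note that the conclusion $D_1+D_2\sim 2dE$ strictly needs $D_1$ and $D_2$ to have disjoint supports (otherwise the hyperplane section need not equal $D_1+D_2$); this holds for general $L$ and is part of what the Chiantini--Ciliberto argument establishes, but it is worth saying explicitly.
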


We now adapt \Cref{lem:chiantini} to the \emph{nonnegative} $Z$-rank: in \Cref{lem:chiantini_real} we treat the generic case, while in \Cref{lem:chiantini_real_special} we consider special cases.
\begin{lemma}\label{lem:chiantini_real}
    Let  $X\subset \PP^n$ be a totally real elliptic normal curve and $Z = v_{n,2d}(X(\RR))$.
    All linear functions $L \in \cone_{d(n+1)}(Z) \setminus \partial\cone_{d(n+1)} (Z)$ (which is a Euclidean dense subset of $\cone_{d(n+1)}(Z)$) admit exactly two different representations:
    \[
        L = \sum_{i=1}^{d(n+1)} a_i \, \eval_{A_i} = \sum_{i=1}^{d(n+1)} b_i \, \eval_{B_i}
    \]
    with $a_i, b_i \in \RR_{\ge 0}$ and $A_i, B_i \in X(\RR)$. 
\end{lemma}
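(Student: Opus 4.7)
The plan is to upgrade \Cref{lem:chiantini} from complex decompositions to real-nonnegative ones, by combining a $\sigma$-invariance argument with a connectedness argument. First, for any $L \in \cone_{d(n+1)}(Z) \setminus \partial\cone_{d(n+1)}(Z)$, I would argue one may choose a representation $L = \sum_{i=1}^{d(n+1)} a_i \, \eval_{A_i}$ with $a_i > 0$, points $A_i \in X(\RR)$ pairwise distinct, and sum $A_1 \oplus \cdots \oplus A_{d(n+1)}$ \emph{not} a $2$-torsion point of $\jac(X)$. The reason is dimensional: the degenerate representations (vanishing coefficient, coalescing points, or prescribed $2$-torsion sum) form a proper real-algebraic subset of the parameter space $X(\RR)^{d(n+1)} \times \RR_{>0}^{d(n+1)}/S_{d(n+1)}$, and this parameter space has the same real dimension $2d(n+1)$ as the target $R_{2d}^*$, so the image of the degenerate locus lies in a lower-dimensional set, hence in $\partial\cone_{d(n+1)}(Z)$. \Cref{lem:chiantini} then provides a second complex decomposition $L = \sum_{i=1}^{d(n+1)} b_i \, \eval_{B_i}$ with $b_i \in \CC^*$ and $B_i \in X$, distinct from the first.

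Second, because $L$ is real, complex conjugation $\sigma$ permutes the two-element set of complex decompositions and fixes the first (which is real by hypothesis); any self-map of a two-element set that fixes one element must be the identity, so the second decomposition is also $\sigma$-invariant, i.e. the multiset $\{(b_i, B_i)\}$ is invariant under $(b, B) \mapsto (\bar b, \sigma(B))$. This forces either $B_i \in X(\RR)$ and $b_i \in \RR$ for every $i$, or the $(b_i, B_i)$ occurring in complex conjugate pairs.

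The main technical step is to rule out complex conjugate pairs and negative coefficients. Let $V$ be the subset of generic interior $L$ whose second decomposition has $B_i \in X(\RR)$ and $b_i > 0$. By continuity of the roots of the Chiantini two-to-one covering away from its discriminant, $V$ is open; it is also closed in the complement of the discriminant, since exiting $V$ would require some $b_i \to 0$ (which would place $L$ in $\cone_{d(n+1)-1}(Z) \subset \partial\cone_{d(n+1)}(Z)$) or a coalescence of two $B_i$'s (a discriminant event, which is the only way a real pair can become a complex conjugate pair or a coefficient can change sign). Nonemptiness of $V$ follows by direct construction, mimicking the techniques in the proof of \Cref{prop:car_connected} and \Cref{thm:car}, so the connectedness of the interior of $\cone_{d(n+1)}(Z)$ minus the discriminant forces $V$ to exhaust this stratum; continuity then extends the conclusion across the discriminant. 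The principal obstacle is this final connectedness claim, particularly when $X(\RR)$ has two connected components: the parameter space $X(\RR)^{d(n+1)}$ then breaks into $2^{d(n+1)}$ components, and one must show—either via a degree-theoretic count for the covering $\phi_{\RR}$ or a delicate path-lifting argument using the geometry exploited in the proof of \Cref{thm:car}—that $V$ nevertheless covers the entire interior.
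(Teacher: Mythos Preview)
Your setup and $\sigma$-invariance step are exactly what the paper does: apply \Cref{lem:chiantini} to the chosen real decomposition, then observe that complex conjugation fixes the two-element set of decompositions and fixes the first one, hence fixes the second, so the multiset $\{(b_i,B_i)\}$ is $\sigma$-stable.

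Where you diverge is in the ``main technical step.'' You propose a topological argument (open--closed plus connectedness of the interior minus discriminant), and you correctly flag that connectedness is the sticking point when $X(\RR)$ has two components. The paper sidesteps this entirely with a direct, elementary argument based on evaluating $L$ on squares. Since $L=\sum a_i\,\eval_{A_i}$ with $a_i\ge 0$ and $A_i\in X(\RR)$, one has $L(\ell^2)\ge 0$ for every real linear form $\ell$ on $\nu_{n,d}(X)$. Now, if some pair $(B_i,B_{j})$ were complex conjugate (with $b_j=\overline{b_i}$), choose a real $\ell$ vanishing at all $B_k$ except $B_i$ and $\overline{B_i}$; there remain two real degrees of freedom in $\ell$, enough to arrange that $b_i\,\ell(B_i)^2+\overline{b_i}\,\overline{\ell(B_i)^2}=2\,\Re\bigl(b_i\,\ell(B_i)^2\bigr)<0$, contradicting $L(\ell^2)\ge 0$. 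Once all $B_i\in X(\RR)$, choose a real $\ell$ vanishing at all $B_k$ except $B_i$; then $0\le L(\ell^2)=b_i\,\ell(B_i)^2$ forces $b_i\ge 0$.

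So the gap in your approach is not just that the connectedness claim is hard---it is that the whole topological detour is unnecessary. The positivity of $L$ on squares (equivalently, positive semidefiniteness of the moment matrix) gives the reality and nonnegativity of the second decomposition \emph{pointwise}, with no need to vary $L$ or invoke any global structure of $\cone_{d(n+1)}(Z)$.
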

\begin{lemma}\label{lem:chiantini_real_special}
    Let  $X\subset \PP^n$ be a totally real elliptic normal curve and $Z = v_{n,2d}(X(\RR))$.
    All linear functionals $L
    \in \partial\cone_{d(n+1)}(Z) \setminus \cone_{d(n+1)-1}(Z)$ admit at most two representations as in \Cref{lem:chiantini_real}.
\end{lemma}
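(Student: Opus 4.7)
The plan is to adapt the strategy of Lemma~\ref{lem:chiantini_real}, with modifications for the boundary case where Lemma~\ref{lem:chiantini} may not directly apply.

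First I would observe that, for any representation $L = \sum_{i=1}^{d(n+1)} a_i \, \eval_{A_i}$ of such an $L$, the points $A_i$ are pairwise distinct in $X(\RR)$ and all $a_i > 0$: otherwise, merging coincident points or discarding zero-coefficient terms would shorten the representation and force $L \in \cone_{d(n+1)-1}(Z)$, contradicting the hypothesis.

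Next I would split into two cases according to whether $A_1 \oplus \cdots \oplus A_{d(n+1)}$ is a $2$-torsion point of $\jac(\nu_{n,d}(X))$ (with respect to the embedding divisor $dE$) for some chosen representation. In the first case, where this sum is not a $2$-torsion point, Lemma~\ref{lem:chiantini} yields at most two distinct $\CC$-decompositions of $Z$-rank $d(n+1)$; then the argument from the second half of the proof of Lemma~\ref{lem:chiantini_real} -- using real linear forms vanishing at prescribed subsets of the second decomposition's points -- transfers verbatim to show that both decompositions are real and nonnegative, giving at most two $\RR_{\ge 0}$-representations of $L$.

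In the second case, every representation of $L$ has its points summing to a $2$-torsion point. Here I would use a perturbation-and-limit argument. Approximate $L$ by a sequence $L_k \to L$ with $L_k \in \cone_{d(n+1)}(Z) \setminus \partial\cone_{d(n+1)}(Z)$, constructed by slightly moving one of the $A_i$ along $X(\RR)$ so that the perturbed sum escapes the finite set of real $2$-torsion points of the Jacobian. By Lemma~\ref{lem:chiantini_real} each $L_k$ admits exactly two distinct representations, and by compactness of $X(\RR)$ -- combined with the fact that no coefficient can degenerate to $0$ in the limit without forcing $L \in \cone_{d(n+1)-1}(Z)$ -- convergent subsequences produce at most two limit representations of $L$. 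Finally I would argue that any fixed representation of $L$ is itself realized as such a limit by perturbing it to a nearby interior representation, whence $L$ has at most two representations.

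The main obstacle lies in the second case: one must verify that every representation of $L$ arises as a limit of nearby interior representations, and that only two global limits exist (not merely two limits per approximating sequence). This requires analyzing the local branches of the multi-valued representation map on $\cone_{d(n+1)}(Z)$ near $L$, crucially using that the decomposition map for elliptic normal curves has degree $2$ (as in~\cite{chiantiniConceptKsecantOrder2006}) so that the two branches extend continuously across the boundary.
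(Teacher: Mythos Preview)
Your case split is unnecessary: Case~1 never occurs. If $L \in \partial\cone_{d(n+1)}(Z)$ has a representation with distinct $A_i$ and all $a_i > 0$, then the differential of $\psi$ at $\vb A$ (as in the proof of \Cref{prop:car_connected}) fails to be surjective --- otherwise the image of $\psi$ would contain a neighborhood of $L$, contradicting $L \in \partial\cone_{d(n+1)}(Z)$. Non-surjectivity forces the existence of a quadric double-vanishing at $A_1,\ldots,A_{d(n+1)}$, which for distinct points means precisely that their sum is a $2$-torsion point. So every representation lands in your Case~2; the paper makes exactly this observation at the outset.

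For Case~2, the gap you flag is genuine and your sketch does not close it. Different approximating sequences $L_k \to L$ can in principle yield different pairs of limit representations, and you have not shown that all representations of $L$ are captured by a single such family; the appeal to ``branches extending continuously'' is precisely the statement that needs proof. The paper takes an algebraic route instead: it restricts $\psi$ to each torsion level set $P_\alpha = \{\vb A : A_1 \oplus \cdots \oplus A_{d(n+1)} = \alpha\}$, checks that the differential of $\psi_\alpha$ has corank one at configurations of distinct points, and uses a dimension count to conclude that the fibers $\psi_\alpha^{-1}(L)$ are finite. Since the generic fiber of the (complexified) decomposition map has two points by \Cref{lem:chiantini}, algebraic semicontinuity then bounds every finite fiber by two. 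Your ``local branches of the degree-$2$ map'' heuristic is pointing at this same semicontinuity, but the topological perturbation argument by itself does not supply it.
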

In the previous lemmas, we focused on representations using $d(n+1)$ points, showing that they are generically finitely many. On the other hand, when we use $d(n+1)+1$ points a generic linear functional $L$ admits infinitely many representations.
More precisely, we can show that every linear functional in the relative interior of $\pos_{X, 2d}^\vee$ has a $2$-dimensional family of representations using $d(n+1)+1$ points.

\begin{lemma}\label{lem:fiber}
    Let  $X\subset \PP^n$ be a totally real elliptic normal curve and let $L \in \interior \pos_{X, 2d}^\vee$. Then $L$ has a $2$-dimensional family of representations using $d(n+1)+1$ point evaluations.
\end{lemma}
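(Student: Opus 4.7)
The plan is to analyze the parametrization map
\[
\psi \colon X(\RR)^N \times \RR^N \longrightarrow R_{2d}^*, \qquad ((A_i), (a_i)) \longmapsto \sum_{i=1}^N a_i \eval_{A_i}, \qquad N \coloneqq d(n+1)+1,
\]
whose restriction to $X(\RR)^N \times \RR_{\ge 0}^N$ surjects onto the moment cone $\pos_{X,2d}^\vee$ by \Cref{cor:cara}. The source has real dimension $2N = 2d(n+1)+2$, while the target $R_{2d}^* \cong \cI(X)_{2d}^\perp$ has dimension $2d(n+1) = 2N-2$, so the naive dimension count already predicts a fiber of dimension $2$. The task is to realize this prediction as an honest $2$-dimensional family of representations sitting over every interior point $L$.

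The central computation is the differential of $\psi$ at a point $p = ((A_i),(a_i))$ with the $A_i$ pairwise distinct and all $a_i$ nonzero. Using the Bombieri--Weil identification of \Cref{sec:waring}, a form $q \in R_{2d}$ annihilates the image of $\dd\psi_p$ precisely when $q$ vanishes to order at least $2$ at each $A_i$, i.e., $\div q \ge 2D$ with $D = A_1 + \dots + A_N$. Since $\deg 2D = 2N = 2d(n+1)+2$ strictly exceeds $\deg(\div q) = 2d(n+1)$, no such nonzero $q$ exists, so $\dd\psi_p$ is surjective and the implicit function theorem identifies $\psi^{-1}(\psi(p))$ locally with a smooth $2$-manifold through $p$.

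To produce a $2$-dimensional family over an arbitrary $L \in \interior \pos_{X,2d}^\vee$, I would take a Carathéodory representation $L = \sum_{i=1}^k b_i \eval_{B_i}$ with $k \le N$ distinct $B_i \in X(\RR)$ and positive $b_i$, and distinguish cases. If $k = N$ the previous paragraph applies directly. If $k \le N-2$, we pad with zero-weight atoms $C_{k+1},\dots,C_N \in X(\RR)$: moving these $\ge 2$ free atoms traces out a $(N-k)$-dimensional subfamily of $\psi^{-1}(L)$ inside the nonnegative locus. If $k = N-1$, a single zero-weight atom $C_N$ is appended and the analogous cokernel computation at the padded point uses the constraint divisor $2(B_1+\dots+B_{N-1}) + C_N$ of degree $2N-1 > 2d(n+1)$, again yielding surjectivity of $\dd\psi$ and a local smooth $2$-manifold.

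The main subtlety is the boundary of $\RR_{\ge 0}^N$ when some weights vanish: one first applies the implicit function theorem to $\psi$ on the full linear source $X(\RR)^N \times \RR^N$, and then checks that the resulting $2$-manifold meets $X(\RR)^N \times \RR_{\ge 0}^N$ in a $2$-dimensional neighborhood, either because small perturbations preserve positivity of the originally positive weights (in the cases $k = N$ and $k = N-1$) or because the subfamily produced by moving zero-weight atoms already lies along the boundary stratum with those weights pinned to zero (in the case $k \le N-2$).
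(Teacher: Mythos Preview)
Your differential computation and dimension count are the same as the paper's, and that part is fine. There are, however, two issues with your case analysis.

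First, the case $k \le N-2$ is vacuous. For $L \in \interior \pos_{X,2d}^\vee$ the moment form $M_d(L)(p,q) = L(pq)$ is positive definite on $R_d$ (since $p^2 \in \pos_{X,2d}\setminus\{0\}$ for $p \ne 0$), so $\car(L) \ge \rank M_d(L) = \dim R_d = d(n+1) = N-1$. Only the cases $k = N-1$ and $k = N$ can actually occur.

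Second, and more seriously, your treatment of the case $k = N-1$ has a gap. You pad the representation with a zero-weight atom $C_N$ and correctly observe that $\dd\psi$ is surjective there, so the full fiber in $X(\RR)^N \times \RR^N$ is a smooth $2$-manifold. But your justification that this $2$-manifold meets $\{a_N \ge 0\}$ in a $2$-dimensional set is only that ``small perturbations preserve positivity of the originally positive weights'' --- this says nothing about the zero weight $a_N$. In fact, one checks that the tangent space to the fiber at the padded point lies entirely in $\{\alpha_N = 0\}$ precisely when $B_1 \oplus \dots \oplus B_{N-1}$ is a $2$-torsion point of the Jacobian; in that case the fiber is \emph{tangent} to $\{a_N = 0\}$, and without a second-order argument you cannot conclude which side it lies on. This situation genuinely arises: in the disconnected case there exist $L \in \interior \pos^\vee \cap \partial \cone_{N-1}(Z)$ whose $(N-1)$-atom representations all sum to a non-positive $2$-torsion point (cf.\ the proof of \Cref{thm:car}).

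The paper sidesteps both issues at once. It parametrizes via the affine cone, $\psi\colon \widehat X(\RR)^{N} \to R_2^*$, $\vb A \mapsto \sum \ell_{A_i}^2$, so nonnegativity of weights is automatic. Then, instead of case analysis on $k$, it shows directly that every interior $L$ is the image of a regular point: for each $A \in X(\RR)$ one pushes $L$ along $-\eval_A$ until hitting $\partial \pos^\vee$, obtaining a representation of $L$ with $N$ atoms including $A$; if these were never distinct, $L$ would admit an $(N-1)$-point representation containing \emph{every} $A \in X(\RR)$, contradicting the finiteness in \Cref{lem:chiantini_real,lem:chiantini_real_special}. This ray-shooting argument is what is missing from your proof.
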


We now use \Cref{lem:fiber} to show that there exist representations of linear functionals in $\pos_{X,2d}^\vee$ whose atoms avoid finitely many points in $X(\RR)$.
\begin{lemma}\label{lem:excluding_points}
    Let $X\subset \PP^n$ be a totally real elliptic normal curve and
    let $L$ be in the interior of $\pos_{X, 2d}^\vee$. If \[\mathcal B = \{ \, B_1, \dots , B_{m} \} \subset \widehat X(\RR) \cap \SS^{n}\]
    then there exists a representation of $L$ using $d(n+1)+1$ atoms that do not belong to $\mathcal B$.
\end{lemma}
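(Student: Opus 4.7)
The plan is to reduce to the case of quadrics via a Veronese re-embedding and then carry out a dimension count against the two-parameter family of representations provided by \Cref{lem:fiber}. Since $\nu_{n,d}(X)$ is again a totally real elliptic normal curve, it suffices to treat $d=1$: the goal becomes, for $L\in\interior\pos_{X,2}^\vee$ and a finite set $\mathcal B\subset \widehat X(\RR)\cap \SS^n$, to find a representation $L=\sum_{i=1}^{n+2}\ell_{A_i}^2$ with no $A_i$ lying on any line $\RR\cdot B$ for $B\in\mathcal B$.

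Let $\psi\colon \widehat X(\RR)^{n+2}\to R_2^*$ be the map sending $(A_1,\dots,A_{n+2})$ to $\sum_{i=1}^{n+2}\ell_{A_i}^2$. By \Cref{lem:fiber} (and its proof), there is a regular point $\vb A^0\in F\coloneqq\psi^{-1}(L)$ with pairwise distinct atoms, near which $F$ is a real-analytic manifold of dimension $2$. For each index $j\in\{1,\dots,n+2\}$ and each $B\in\mathcal B$, the bad locus
\[
F_{j,B} \;=\; \bigl\{\vb A\in F \;:\; A_j\in \RR\cdot B\bigr\}
\]
has dimension at most $1$. Indeed, $F_{j,B}$ is the fiber over $L$ of the map
\[
\Psi_{j,B}\colon \RR\times \widehat X(\RR)^{n+1}\longrightarrow R_2^*, \qquad (t, A_1',\dots,A_{n+1}')\longmapsto t^2\ell_B^2+\sum_{i=1}^{n+1}\ell_{A_i'}^2,
\]
whose domain has real dimension $2n+3$ and whose target has dimension $2n+2$, so the generic fiber is $1$-dimensional. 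Equivalently, at the level of tangent spaces at $\vb A^0$, the constraint $v_j\in \RR\cdot B$ cuts the $2$-dimensional $\ker \dd\psi_{\vb A^0}$ by one additional real condition, leaving a subspace of dimension at most $1$.

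Since $\mathcal B$ is finite and the index $j$ ranges over finitely many values, the union $F_{\mathcal B}=\bigcup_{j,B}F_{j,B}$ is a finite union of subsets of dimension at most $1$ and hence cannot cover the $2$-dimensional regular locus of $F$. Any $\vb A\in F\setminus F_{\mathcal B}$ produces the required representation $L=\sum_i\ell_{A_i}^2=\sum_i\eval_{A_i}$ with each $A_i\notin \RR\cdot\mathcal B$, and in particular $A_i\notin\mathcal B$. The main delicacy is ensuring the dimension bound $\dim F_{j,B}\le 1$ for the \emph{specific} $L$ rather than a generic one; this is where \Cref{lem:chiantini_real} and \Cref{lem:chiantini_real_special} enter, since they guarantee that for each $t$ in the interval $\{t:L-t^2\ell_B^2\in \cone_{n+1}(Z)\setminus\cone_n(Z)\}$ the functional $L-t^2\ell_B^2$ has at most two decompositions of length $n+1$. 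Thus $F_{j,B}$ is parametrized by the single parameter $t$ up to finite branching; the finitely many exceptional values of $t$ where $L-t^2\ell_B^2$ drops rank contribute lower-dimensional strata and do not affect the total dimension count.
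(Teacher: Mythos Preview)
Your proof is correct and takes essentially the same approach as the paper: reduce to $d=1$, invoke \Cref{lem:fiber} for the two-dimensional fiber $\psi^{-1}(L)$, and use \Cref{lem:chiantini_real} to show that the locus where one atom is pinned to a ray $\RR\cdot B_j$ is at most one-dimensional. The only cosmetic differences are that the paper phrases this as a proof by contradiction (if every representation hit some $B_j$, one set $\mathcal{A}_{B_j}(L)$ would be two-dimensional, yielding a one-parameter family of $(n{+}1)$-atom representations of $L-\lambda\,\eval_{B_j}$) rather than as a direct dimension count, and that you are slightly more careful in also invoking \Cref{lem:chiantini_real_special} for the boundary stratum, which the paper leaves implicit.
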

\begin{remark}
    We make some remarks on the previous results.
    \begin{enumerate}
        \item In the proof of \Cref{lem:chiantini_real}, we are implicitly using the fact that, if $L \in \pos_{X, 2d}^\vee$, then the \emph{moment or catalecticant matrix} $M_d(L)$ is positive definite, see \Cref{sec:almost_flat}.
        \item Assuming that $X(\RR)$ is connected, we can deduce from \Cref{cor:waring} that the rank of the moment or catalecticant matrix $M_d(L)$ is equal to the nonnegative $Z$-rank of $L$: in general, the rank of $M_d(L)$ is only a lower bound for the rank. 
        \item Given a general $L$, in \Cref{lem:chiantini_real} (and in \Cref{lem:chiantini}) the atoms of the two different possible representations form the zero locus of a form $q = q_L \in R_{2d}$ on the elliptic normal curve, i.e. \[\div q = A_1 + \dots + A_{d(n+1)}+B_1 + \dots +B_{d(n+1)}.\]
        This shows that the set of possible atoms forms a self-associated set for the Gale transform, see \cite{eisenbudProjectiveGeometryGale2000}. Therefore, given $L$ and a representing atoms $A_1, \dots , A_{d(n+1)}$, the problem of determining the second set of representing atoms $B_1, \dots , B_{d(n+1)}$ could be investigated using the geometry of the Gale transform.
    \end{enumerate}
\end{remark}

\section{The moment problem for plane cubics} \label{sec:dual_plane_cubics}

In this section, we consider \emph{affine} genus one curves, and see how the results in \Cref{sec:cara} can be exploited to solve the \emph{moment problem} on such curves. In particular, we show how the topology of the real locus and the number of points at infinity can be used to characterize the \emph{flat extension} degree. In the following, affine curves and their real loci will be denoted by $X$ and $X(\RR)$ respectively, while their projective closures will be denoted by $\overline X$ and $\overline X(\RR)$. 

In recent years, the case of \emph{plane} curves has been particularly investigated, see \cite{fialkowSolutionTruncatedMoment2011, zalarTruncatedHamburgerMoment2021, zalarTruncatedMomentProblem2022, zalarTruncatedMomentProblem2023, bhardwajNonnegativePolynomialsSums2020}. For ease of presentation and comparison, we restrict ourselves to the same case. i.e. we consider a smooth totally real affine plane cubic $X(\RR) \subset \RR^2$.

\subsection{Flat extensions and almost flat extensions}\label{sec:almost_flat}
We now briefly introduce the relevant definitions and notations for this section.

Let $X(\RR) \subset \RR^2$ be a totally real affine plane cubic.
Given \[L \in R_{\le 2d}^* = \RR[X]_{\le 2d}^* = \left(\RR[x, y]_{\le d} \big/I(X)_{\le d}\right)^*\] we can consider the bilinear form
\begin{align*}
    M_d(L) \colon R_{\le d} \times R_{\le d} & \longrightarrow \RR \\
    (q_1, q_2) & \longmapsto L(q_1q_2)
\end{align*}
Equivalently, $M_d(L)$ is the linear map:
\begin{align*}
    M_d(L) \colon R_{\le d} & \longrightarrow R_{\le d}^* \\
    q & \longmapsto L \circ {m}_q
\end{align*}
where ${m}_q \colon R_{\le d} \to R_{\le 2d}$ denotes the multiplication by $q\in R_{\le d}$.
We call the matrix representing $M_d(L)$ (with respect to any basis of $R_{\le d}$) the \emph{moment matrix} of $L$. We refer to \cite{schmudgenMomentProblem2017a} for more details. These matrices are also known as \emph{Catalecticant matrices} \cite{iarrobinoPowerSumsGorenstein1999} (in the case $X = \RR^n$) or \emph{Hankel matrices} \cite{mourrainPolynomialExponentialDecomposition2018a}. 
We also notice that $\rank M_d(L) \le \dim R_{\le d} = 3d$.

Similarly to the homogeneous case, we denote \[\pos_{X, \le 2d} = \{ \, q \in R_{\le 2d} \colon q(A) \ge 0 \text{ for all } A\in X(\RR) \,\}.\] 
Notice that, if $L \in \pos_{X, \le 2d}^\vee$, then the moment matrix $M_d(L)$ is positive semidefinite.

As $R_{2d} \cong R_{\le 2d}$ by homogenization, we also have $\pos_{X, \le 2d} \cong \pos_{\overline{X}, 2d}$ and $\pos_{X, \le 2d}^\vee \cong \pos_{\overline{X}, 2d}^\vee$. To emphasize the fact that we are working in an affine setting, hereafter we use the notations $R_{\le 2d}$ and $\pos_{X, \le 2d}$.

We denote $\mcone_{2d}(X(\RR))$ the \emph{moment cone}, i.e. the convex cone of linear functionals acting on polynomials of degree $\le 2d$ which are induced by a Borel measure supported on $X(\RR)$. Using the Richter-Tchakaloff theorem \cite[Th.~1.24] {schmudgenMomentProblem2017a}, we can describe $\mcone_{2d}(X(\RR))$ using only conic sums of point evaluations at $X(\RR)$:
\[
    \mcone_{2d}(X(\RR)) = \cone(\eval_A \colon A \in X(\RR))
\]
If $X(\RR)$ is compact then $\mcone_{2d}(X(\RR)) = \pos_{X, 2d}^\vee$. On the other hand, if $X(\RR)$ is not compact, then $\mcone_{2d}(X(\RR))$ is not closed and the inclusion $\mcone_{2d}(X(\RR)) \subsetneq \pos_{X, \le 2d}^\vee$ is proper: this is always the case for plane cubics $X(\RR) \subset \RR^2$.

We are now ready to recall the notion of flat extension.

\begin{definition}\label{def:flat_extension}
    A \emph{positive flat extension} of $L \in R_{\le 2d}^*$ is any $\widetilde L \in R_{\le 2d+2}^*$ such that:
\begin{enumerate}
    \item $\widetilde L\mid_{{R_{\le 2d}}} = L$;
    \item $\rank M_{d+1}(\widetilde L) = \rank M_d(L)$;
    \item $M_{d+1}(\widetilde L)$ is positive semidefinite.
\end{enumerate}
\end{definition}
Testing the existence of a flat extension is the standard algorithmic tool to verify if $L$ is a moment linear functional or not, see e.g. \cite{curtoFlatExtensionsPositive1998,schmudgenMomentProblem2017a, mourrainPolynomialExponentialDecomposition2018a}.
Our definition of flat extension is slightly different from others present in the literature for the case of curves (see e.g. \cite{fialkowSolutionTruncatedMoment2011}), in terms of how we enforce the atoms of any representation of $L$ to lie on $X(\RR)$. We refer to \Cref{rem:moment_dictionary} for a comparison.

The rank of the flat extension determines the number of atoms needed to represent $L$ as conic combination of point evaluations on $X(\RR)$, see e.g. \cite[Th.~17.36]{schmudgenMomentProblem2017a}.
Moreover, if a flat extension is found, then the flat extension can be further extended from degree $d+1$ to an arbitrary degree. Notice also that, since $\widetilde L\mid_{{R_{\le 2d}}} = L$, the matrix $M_d(L)$ can be identified with the matrix $M_d(\widetilde L)$, which is a submatrix of $M_{d+1}(\widetilde L)$. 

In other words, the rank of a flat extension gives the (affine) Carath\'eodory number of a linear functional. We define the Carath\'eodory number for the affine plane curve $X$ as in the projective case, as follows:
\[
    \car_{X, \le 2d} \coloneqq \min \left\{ \, k \in \NN \mid \cone_k(\nu_{2, d}(X(\RR)))= \mcone_{2d}(X(\RR)) \, \right\}
\]
Note that we are not allowed to use atoms at infinity, i.e. point evaluations from $\overline X(\RR) \setminus X(\RR)$.

In the following, we will also need a slight generalization of \Cref{def:flat_extension}.

\begin{definition}\label{def:almost_flat}
     An \emph{almost flat extension} of $L \in R_{\le 2d}$ is any $\widetilde L \in R_{\le 2d+4}^*$ such that:
\begin{enumerate}
    \item $\widetilde L\mid_{{R_{\le 2d}}} = L$;
    \item $\rank M_{d+2}(\widetilde L) \le \rank M_d(L) + 1$;
    \item $M_{d+2}(\widetilde L)$ is positive semidefinite.
\end{enumerate}
\end{definition}

\begin{remark}
    If $\widetilde L$ is a flat extension of $L$, then $\widetilde L$ can be further extended to an almost flat extension of $L$. More precisely, if $L$ admits a flat extension $\widetilde L$, then $\widetilde L \in R_{\le 2d+2}^*$ admits a flat extension $\widetilde{\widetilde L} \in R_{\le 2d+4}^*$ such that:
\[
    \rank M_{d+2}(\widetilde{\widetilde L}) = \rank M_{d+1}(\widetilde L) = \rank M_d(L).
\]
On the contrary, if $L$ does not admit a flat extension, but it admits an almost flat extension $\widetilde L$, we necessarily have:
\[
    \rank M_{d+2}(\widetilde L) = \rank M_{d+1}(\widetilde L) = \rank M_d(L) + 1.
\]
\end{remark}

In \Cref{sec:moment_problem_for_cubics} we show that we can characterize solutions to the moment problem on plane cubics using almost flat extensions.
\begin{remark}\label{rem:moment_dictionary}
    We compare our definitions with the ones existing in the moment literature, e.g. in \cite[Th.~1.1]{fialkowSolutionTruncatedMoment2011}. The main difference is that, instead of working with all the monomials up to a certain degree and imposing extra conditions for the representing measure to be supported on the plane curve, we directly work in the quotient $R_{\le 2d} = \RR[x, y]_{\le 2d} \big/I(X)_{\le 2d}$. We also speak of the flat extension of a linear functional, instead of a flat extension of the corresponding moment matrix.
    Concretely, a \emph{p-pure} moment matrix in \cite[Th.~1.1]{fialkowSolutionTruncatedMoment2011} corresponds to a positive definite moment matrix in this manuscript, and a \emph{positive, recursively generated flat moment matrix extension} in \cite[Th.~1.1]{fialkowSolutionTruncatedMoment2011} simply corresponds to our flat extension.
\end{remark}

\subsection{The moment problem for plane cubics} \label{sec:moment_problem_for_cubics}
We are now ready to show how to solve the moment problem, i.e. how to characterize membership $L \in \mcone_{2d}(X(\RR))$, using almost flat extensions, see \Cref{def:almost_flat}.

\begin{proposition}\label{prop:almost_flat}
    Let $X(\RR) \subset \RR^2$ be the affine real locus of a totally real plane cubic, whose projectivization $\overline{X}$ is smooth. Given $L \in R_{\le 2d}^*$, the following are equivalent:
    \begin{enumerate}
        \item $L \in \mcone_{2d}(X(\RR))$;
        \item $l$ has an almost flat extension $\widetilde L \in R_{\le 2d+4}$.
    \end{enumerate}
\end{proposition}
\begin{proof}
    $(i)\implies (ii)$ Recall that, by homogenization, $\pos_{X, \le 2d} \cong \pos_{\overline X , 2d}$. Therefore, given $L \in \mcone_{2d}(X(\RR)) \subset \pos_{X, \le 2d}^\vee$, it follows from \Cref{cor:cara} that we can always write \[ L = \sum_{i=1}^{3d+1} a_i \, \eval_{A_i}\]
    where, for all $i$, $a_i \ge 0$ and $A_i$ belongs to $\overline{X}(\RR) \subset \PP^2(\RR)$, the real locus of the projective closure $\overline X$ of $X$. From \Cref{lem:excluding_points}, we deduce that there exists a representation for which $A_i \in X(\RR)$ (we are excluding the finitely many points at infinity).
    Therefore $L$ admits the almost flat extension $\widetilde L = \sum_{i=1}^{3d+1} a_i \, \eval_{A_i} \in R_{2d+4}^*$.

    $(i)\implies (ii)$ If $L$ admits an almost flat extension $\widetilde L$, then either $\rank M_{d+1}(\widetilde L) = \rank M_d(L)$ or $\rank M_{d+2}(\widetilde L) = \rank M_{d+1}(\widetilde L)$. From the flat extension theorem, we deduce that, in both cases, $L$ can be represented as an atomic measure using point evaluations in $X(\RR)$, which implies $L \in \mcone_{2d}(X(\RR))$. 
\end{proof}

We showed in \Cref{prop:almost_flat} that, extending the degree by $4$ and considering an almost flat extension, it is possible to characterize whether $L \in \mcone_{2d}(X(\RR))$ or not. A natural question then arises: is it necessary to consider almost flat extensions, or can we replace them with ordinary flat extensions? In the following, we are going to show how the answer to this question depends on the topology of $X(\RR)$ and the number of points at infinity of the projective closure $\overline X (\RR)$.

\begin{theorem}\label{thm:flat_conn_one_infty}
    Let $X$ be an affine, smooth, totally real plane cubic. Assume that $X(\RR)$ is connected and that the projective closure $\overline X (\RR)$ contains only one real point at infinity. Then, given $L \in R_{\le 2d}$, the following are equivalent:
    \begin{enumerate}
        \item $L \in \mcone_{2d}(X(\RR))$;
        \item $L$ has a positive flat extension $\widetilde L \in R_{\le 2d+2}^*$.
    \end{enumerate}
    Moreover, the affine and projective Carath\'eodory numbers are equal: $\car_{X, \le 2d}= \car_{\overline X, 2d}=3d$.
\end{theorem}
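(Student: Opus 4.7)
The plan is to prove both implications through the classical Curto--Fialkow flat extension theorem, which asserts that $L \in R_{\le 2d}^*$ admits a positive flat extension if and only if $L$ has an $r$-atomic representing measure with $r = \rank M_d(L)$, whose atoms necessarily lie in the common zero set of $\ker M_{d+1}(\widetilde L)$, and hence in $X(\RR)$ by positivity of $\widetilde L$.

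For the direction $(ii) \Rightarrow (i)$, I would note that the flat extension $\widetilde L \in R_{\le 2d+2}^*$ can be iteratively extended to all higher degrees while preserving both rank and positivity, and the Curto--Fialkow construction then produces an $r$-atomic representing measure supported on $X(\RR)$, giving $L \in \mcone_{2d}(X(\RR))$.

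The main work is $(i) \Rightarrow (ii)$: I would construct, for every $L \in \mcone_{2d}(X(\RR))$, an $r$-atomic representation with atoms in $X(\RR)$. Since the projective closure $\overline X$ is a totally real elliptic normal curve in $\PP^2$ and $\overline X(\RR) = X(\RR) \cup \{P_\infty\}$ is connected, \Cref{cor:cara} yields $\car_{\overline X, 2d} = 3d$, so $L$ admits a $3d$-atomic representation on $\overline X(\RR)$. For $L$ in the interior of $\pos_{X, \le 2d}^\vee$ (where $r = 3d$), \Cref{lem:chiantini_real} produces exactly two distinct $3d$-atomic representations in $\overline X(\RR)$, and I would argue that their supports must be disjoint: a shared common atom could be subtracted off to produce two distinct $(3d-1)$-atomic decompositions of a lower-rank functional on the degree-$6d$ Veronese-embedded elliptic normal curve $\nu_{2, 2d}(\overline X)$, contradicting the identifiability of its $(3d-1)$-secant variety (since $3d-1 < 6d/2$, one is below the middle secant where the Gale-self-associated phenomenon occurs). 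Because $\overline X(\RR) \setminus X(\RR) = \{P_\infty\}$ consists of a single point, $P_\infty$ lies in at most one of the two disjoint supports, and the other representation supplies the desired $3d$-atomic measure on $X(\RR)$.

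For $L$ on the boundary of $\mcone_{2d}(X(\RR))$ (where $r < 3d$), the plan is to invoke \Cref{lem:chiantini_real_special} and argue by approximation: choose $L_n \to L$ in the interior of $\mcone_{2d}(X(\RR))$, produce $3d$-atomic $X(\RR)$-representations of each $L_n$ by the interior case, extract a subsequence whose atoms converge in $\overline X(\RR)$, and collapse coincident limit atoms to obtain an $r$-atomic representation of $L$. One must rule out atoms escaping to $P_\infty$ in the limit, which would force every representation of $L$ to use $P_\infty$ with positive mass and contradict $L \in \mcone_{2d}(X(\RR))$. The Carath\'eodory identity $\car_{X, \le 2d} = 3d$ then follows from the above ($\le$) together with $\car_{X, \le 2d} \ge \car_{\overline X, 2d} = 3d$, the latter being automatic since any $X(\RR)$-representation is in particular an $\overline X(\RR)$-representation. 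The main obstacle lies in this boundary / limit step: on the non-compact set $X(\RR)$, atomic masses can drift toward $P_\infty$, and one must carefully track both the weights and the rank to ensure that the limiting representation stays supported on $X(\RR)$ with exactly $r$ atoms.
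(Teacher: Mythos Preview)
Your treatment of $(ii)\Rightarrow(i)$ and of the interior case of $(i)\Rightarrow(ii)$ matches the paper's argument almost exactly: both use \Cref{cor:cara} to get two $3d$-atomic representations on $\overline X(\RR)$ via \Cref{lem:chiantini_real}, observe that these two representations cannot share an atom, and conclude that at most one of them can use the unique point $P_\infty$ at infinity.

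Where you diverge from the paper is in the boundary case, and here your approximation scheme is both more complicated than necessary and has a genuine gap. You propose to take $L_n\to L$ from the interior, pass to a limit of the $3d$-atomic representations, and then ``rule out atoms escaping to $P_\infty$'' because that ``would force every representation of $L$ to use $P_\infty$''. But you give no reason why one limiting representation using $P_\infty$ forces \emph{all} representations to use it; \Cref{lem:chiantini_real_special}, which you cite, only bounds the number of representations and does not give uniqueness. The missing ingredient is precisely the determinacy of boundary moment functionals: if $L\in\partial\pos_{\overline X,2d}^\vee$, then $L$ has a \emph{unique} representing measure on $\overline X(\RR)$ (this is the content of \cite[Prop.~18.12]{schmudgenMomentProblem2017a}, which the paper invokes). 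Once you have uniqueness, the approximation is superfluous: since $L\in\mcone_{2d}(X(\RR))$ by hypothesis, $L$ already has \emph{some} representation supported on $X(\RR)$, and by uniqueness this is the only one; the boundary description in \Cref{cor:boundary_moment_cone} then tells you it uses at most $3d$ atoms. This is exactly what the paper does, in two lines.

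A minor side remark: your parenthetical ``where $r<3d$'' for the boundary case is not obviously correct. If $L$ is annihilated by a non-square extreme quadric $q$ (the $T$-torsion family in \Cref{thm:elliptic_normal}), there is no evident degree-$d$ kernel vector for $M_d(L)$, so $\rank M_d(L)$ may well equal $3d$. This does not affect the existence of a flat extension, but it is another indication that the boundary case is better handled by the direct uniqueness argument than by tracking ranks through a limit.
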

\begin{proof}
    For $(ii) \implies (i)$, we can proceed as in \Cref{prop:almost_flat}.

    For $(i) \implies (ii)$, it follows from \Cref{thm:car} that $L$ can be written using at most $3d$ points from $\overline X(\RR)$. We now show that at least one such representation has all points in the affine part $X(\RR)$.

    If $L \in \partial \mcone_{2d}(X(\RR))$, it follows e.g. from \cite[Prop.~18.12]{schmudgenMomentProblem2017a} that $L$ has a unique representation as a sum of point evaluations in $X(\RR)$. This unique representation uses at most $3d$ point evaluations, and thus we can construct a flat extension of $L$ using such a representation.

    If $L \in \interior \mcone_{2d}(X(\RR))$, then \Cref{lem:chiantini_real} implies that $L$ admits two different representations as sum of $3d$ point evaluations. Such distinct representations cannot share any point, and therefore the fact that $\overline X (\RR)$ has a unique point at infinity implies that one of the two uses only points in $X(\RR)$. We can therefore construct a flat extension for $L$ from this representation with only points in $X(\RR)$.

    The fact that $\car_{X, \le 2d}= \car_{\overline X, 2d}$ follows from the previous points, as the existence of representations using points from $\overline{X}(\RR)$ implies that existence of representations using points from $X(\RR)$ with the same number of atoms.
\end{proof}
We now construct an example which illustrates that $\overline X (\RR)$ having a unique point at infinity is a necessary assumption in \Cref{thm:flat_conn_one_infty}.
\begin{example}\label{ex:almost_flat}
    Let $\widetilde X(\RR) \subset \RR^2$ be the real locus of the affine totally real plane cubic, and assume that $\widetilde X(\RR)$ is connected. Let $L \in \interior \mcone_{2d}(\widetilde X(\RR)) \subset \pos_{\widetilde X, \le 2d}^\vee$.

    We deduce from \Cref{lem:chiantini_real} that $L$ admits exactly two distinct representations
    \[
        L = \sum_{i=1}^{3d} a_i \, \eval_{\widetilde A_i} = \sum_{i=1}^{3d} b_i \, \eval_{\widetilde B_i}
    \]
    using $3d$ points form the projective closure of $\widetilde X(\RR)$.
    Now consider the (real) line between $A_1$ and $B_1$, and apply a projective linear change of coordinates in such a way the line between $\widetilde A_1$ and $\widetilde B_1$ is sent to the line at infinity.
    Let $X(\RR) \subset \RR^2$ be the affine real locus of $\widetilde X(\RR)$ under this change of coordinates.

    We have $L \in \interior \mcone_{2d}(X(\RR)) = \interior \mcone_{2d}(\widetilde X(\RR))$, and as before $L$ admits only two representations
        \[
        L = \sum_{i=1}^{3d} a_i \, \eval_{A_i} = \sum_{i=1}^{3d} b_i \, \eval_{B_i} \in \pos_{\widetilde X, \le 2d}^\vee = \pos_{X, \le 2d}^\vee
    \]
    where $A_i, B_i$ denote the images of $\widetilde A_i, \widetilde B_i$ after the change of coordinates above. Notice that both representations use a point at infinity, i.e. $A_1$ and $B_1$ respectively, by construction.

    This shows that $L \in \interior \mcone_{2d}(X(\RR))$ cannot be represented using $3d$ evaluations at points of the affine curve $X(\RR)$, and thus $L$ does not admit a flat extension. Therefore, we cannot remove the condition that $\overline X(\RR)$ has a unique point at infinity from \Cref{thm:flat_conn_one_infty}, to conclude that the flat extension condition is sufficient.
    Notice also that the above discussion implies that $\car_{X, \le 2d}(L) = 3d+1$, but $\car_{\overline X, 2d} = 3d$, and thus $\car_{\overline X , 2d} < \car_{X, \le 2d}$. It is therefore not possible to compute the affine Carath\'eodory numbers using solely the projective information.
    
    We conjecture that such pathological behavior happens for all affine plane cubics with at least two real points at infinity.
\end{example}

We now turn our attention to the disconnected case.
\begin{theorem}\label{thm:almost_flat}
    Let $X(\RR) \subset \RR^2$ be the affine real locus of a totally real plane cubic, whose projectivization $\overline{X}$ is smooth. Assume that $X(\RR)$ has two connected components. Then, given $L \in R_{\le 2d}$, the following are equivalent:
        \begin{enumerate}
        \item $L \in \mcone_{2d}(X(\RR))$;
        \item $L$ has a positive almost flat extension $\widetilde L \in R_{\le 2d+4}^*$.
    \end{enumerate}
    The almost flat extension condition cannot be replaced by flat extension in (ii). Moreover, the affine and projective Carath\'eodory numbers are equal: $\car_{X, \le 2d}= \car_{\overline X, 2d}=3d+1$.
\end{theorem}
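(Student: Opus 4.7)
The plan is to build directly on \Cref{prop:almost_flat}, which already gives the equivalence (i) $\Leftrightarrow$ (ii) without any hypothesis on the topology of $X(\RR)$. Hence I would treat that equivalence as settled and focus on the Carath\'{e}odory identity $\car_{X, \le 2d} = \car_{\overline{X}, 2d} = 3d+1$ together with the failure of ordinary flat extensions.

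For the Carath\'{e}odory number I would proceed in two halves. First, since $\overline{X} \subset \PP^2$ is a smooth plane cubic (so a totally real elliptic normal curve with $n = 2$) and $X(\RR)$ is disconnected, \Cref{cor:cara} immediately gives $\car_{\overline{X}, 2d} = d(n+1) + 1 = 3d+1$. The upper bound $\car_{X, \le 2d} \le 3d+1$ follows by unpacking any almost flat extension $\widetilde{L}$ of $L \in \mcone_{2d}(X(\RR))$: from the rank chain
\[
\rank M_d(L) \le \rank M_{d+1}(\widetilde{L}) \le \rank M_{d+2}(\widetilde{L}) \le \rank M_d(L) + 1,
\]
at least one of the two rightmost inequalities is an equality, so $\widetilde{L}$ yields a genuine flat extension at level $d$ or $d+1$; the classical flat extension theorem then produces an atomic decomposition of $L$ with at most $\rank M_d(L) + 1 \le \dim R_{\le d} + 1 = 3d+1$ atoms in $X(\RR)$. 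For the matching lower bound I would invoke the last assertion of \Cref{thm:car}, which provides a nonempty open set $U \subset \pos_{\overline{X}, 2d}^\vee$ of functionals with $\car_{\overline{X}, 2d}(L) = 3d+1$. Using \Cref{lem:excluding_points} one checks that $\interior \pos_{\overline{X}, 2d}^\vee \subset \mcone_{2d}(X(\RR))$ (every interior functional admits a representation avoiding the finitely many real points at infinity), so the two interiors coincide and $U$ meets $\interior \mcone_{2d}(X(\RR))$; any $L$ in that intersection satisfies $\car_{X, \le 2d}(L) \ge \car_{\overline{X}, 2d}(L) = 3d+1$.

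The failure of ordinary flat extensions then drops out at once: for the same $L \in \interior \mcone_{2d}(X(\RR))$ with $\car_{X, \le 2d}(L) = 3d+1$, we have $\rank M_d(L) \le \dim R_{\le d} = 3d < 3d+1$, so a hypothetical flat extension would produce an atomic representation of $L$ using at most $3d$ points of $X(\RR)$, contradicting $\car_{X, \le 2d}(L) = 3d+1$. The main obstacle in this plan is the interior-matching step: one must confirm that the hard functionals guaranteed by \Cref{thm:car} in the projective moment cone genuinely survive inside the affine moment cone $\mcone_{2d}(X(\RR))$, and this is precisely where \Cref{lem:excluding_points} does the work of pushing atoms away from infinity. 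Once that identification is secured, both the Carath\'{e}odory identity and the flat extension obstruction follow mechanically from the rank ceiling $\rank M_d(L) \le 3d$ and the almost-flat bound $\rank M_{d+2}(\widetilde{L}) \le 3d+1$.
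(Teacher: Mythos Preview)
Your proposal is correct and follows essentially the same approach as the paper: both invoke \Cref{prop:almost_flat} for the equivalence, use \Cref{thm:car}/\Cref{cor:cara} for the projective Carath\'eodory number $3d+1$, and deduce the failure of flat extension from the rank ceiling $\rank M_d(L)\le 3d$. The paper's proof is terser and leaves implicit the step you spell out---namely, that the hard functionals from \Cref{thm:car} actually lie in $\mcone_{2d}(X(\RR))$ via \Cref{lem:excluding_points}---so your version is, if anything, more complete on that point.
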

\begin{proof}
    By \Cref{prop:almost_flat}, we only need to show that there exists $L \in \mcone_{2d}(X(\RR))$ which does not admit a flat extension. For this, it is sufficient to show that $\car_{\overline X , 2d}(L) > 3d$. By \Cref{thm:car}, such an $L$ exists when $X(\RR)$ has two connected components. This also implies that $\car_{X, \le 2d}= \car_{\overline X, 2d}=3d+1$.
\end{proof}
We notice that the solution of the moment problem proposed in \Cref{thm:almost_flat} with the almost flat extension condition is much more complicated than the usual flat extension one, exploited in \Cref{thm:flat_conn_one_infty} and \cite{fialkowSolutionTruncatedMoment2011, zalarTruncatedHamburgerMoment2021, zalarTruncatedMomentProblem2022, zalarTruncatedMomentProblem2023, bhardwajNonnegativePolynomialsSums2020}. However, this increased complexity cannot be avoided, as it is a direct consequence of the higher Carath\'eodory number.

\section*{Acknowledgments}
The authors wish to thank Mario Kummer, Luca Chiantini, and Aljaz Zalar for insightful discussions on \Cref{sec:elliptic_normal}, \Cref{sec:waring}, and \Cref{sec:dual_plane_cubics}, respectively, and Mauricio Velasco for his useful comments.

Part of this project was developed by the authors at the Oberwolfach Research Institute for Mathematics, during Workshops number 2311 and 2312.
Lorenzo Baldi was partially funded by the Paris \^{I}le-de-France Region, under the grant agreement 2021-02--C21/1131, and by the Humboldt Research Fellowship for postdoctoral researchers. Grigoriy Blekherman was partially supported by NSF grant DMS-1901950.

\appendix

\section{Plane cubic curves}
We begin with the example of nonnegative quadrics on cubic plane curves, where we illustrate our results in detail {and provide explicit certificates on nonnegativity obtained by a direct computation}.
\subsection{Plane cubics}\label{sec:plane_cubics}
Plane cubics are the first example of genus one curves. For their basic properties, we refer e.g. to \cite{shafarevichBasicAlgebraicGeometry2013, fultonAlgebraicCurves1989}. In this case, the analysis of $\pos_{X, 2}$ can be performed explicitly and completely. 

Plane cubics are curves in $\PP^2$ defined as the zero locus of a form of degree three. In the following, we always assume that our plane cubics are smooth, irreducible, and totally real.
Recall that the group law of a plane cubic $X$ is induced by the isomorphism
between the plane cubic and the Jacobian:
\begin{align*}
\varphi_{P_{0}}: X & \longrightarrow \Cl^{0} X \cong \jac \\
P & \longrightarrow [P-P_{0}]
\end{align*}
Hereafter, we consider $P_{0}=O$ to be a smooth real inflection point,
so that our plane cubic has equation (Weierstrass form):
\[
h = x_2^{2}x_0-\left(x_1-a_{1}x_0\right)\left(x_1-a_{2}x_0\right)\left(x_1-a_{3}x_0\right) = 0
\]
with $a_{i} \in \CC$. $X$ is non-singular if and only if
$a_{i}\in \CC$'s are distinct.

Denote $\oplus$ and $\ominus$ the group operations in $X$. Then the point at infinity $O$ is the identity,
and \[T_{1}=\left(1:a_{1}: 0\right), T_{2}=\left(1:a_{2}: 0\right), T_{3}=\left(1:a_{3}:0\right)\] are
the $3$ nontrivial $2$-torsion points for $(X, \oplus)$ (see also \Cref{def:two_torsion}).

Since we are assuming that $X$ is totally real, then at least one of the $a_{i}$ is real,
say $a_{1}$, while $a_{2}$ and $a_{3}$ can be either:
\begin{enumerate}
  \item Complex conjugates $a_{2}=\overline{a_{3}}$,
  and in this case $X(\RR)$ is connected in the Euclidean topology; or
  \item both real (and distinct, if $X$ is smooth),
  and in this case $X(\RR)$
  has two connected components. In such a case, we will always assume without loss of generality that $a_1<a_2<a_3$.
\end{enumerate}
These are the two possible topologies for a genus one totally real curve. In the first case, there are only two real $2$-torsion points, $O$ and $T_1$, while in the second case there are four. This is consistent with \Cref{lem:torsion_genus_components}.

We focus on quadratic forms on $X$, and we start by analyzing the extreme rays of $\pos_{X, 2}$, which are given by forms with the maximal possible number of real zeroes. Given a quadric $q \in \RR[x_0, x_1, x_2]_2 \cong \RR[x_0, x_1, x_2]_2/(h)_2 = \RR[X]_2$, we denote $\div q = h\cdot q$ the intersection divisor of $q$ and $h$ (or of $q$ and $X = \cV(h)$), see e.g. \cite{fultonAlgebraicCurves1989}. If $q$ is nonnegative on $X(\RR)$, then $q$ has zeroes on $X(\RR)$ with even multiplicity only. By B\'ezout's theorem, there are at most three such zeroes. We then deduce from \cite[Problem~5.41]{fultonAlgebraicCurves1989} that, given $A_1, A_2, A_3 \in X(\RR)$, there exists $q\in \RR[\vb x]_2$ such that $\div q = 2(A_1+A_2+A_3)$ if and only $A_1\oplus A_2 \oplus A_3$ is a 2-torsion point for $(X, \oplus)$.

If $X(\RR)$ is connected, then $\div q = 2(A_1+A_2+A_3)$ implies that $q$ is nonnegative.
On the other hand, if $X(\RR)$ is disconnected then $\div q = 2(A_1+A_2+A_3)$ does not imply that $q$ is nonnegative. We now show that in the disconnected case, whether $q$ is nonnegative or not depends on the $2$-torsion point $A_1\oplus A_2 \oplus A_3$.

If $A_1 \oplus A_2 \oplus A_3 = O$, then there exists a linear form $\ell \in \RR[\vb x]_1$ such that $\div \ell = A_1+A_2+A_3$, and thus $q = a \,\ell^2$ for some $a \in \RR_{>0}$. This shows that $q$ is a sum of squares and thus nonnegative. 

We now deal with the other cases. In the following, we denote $\ell_{A_1, A_2}$ the line passing through $A_1, A_1 \in X(\RR)$, and $\ell_A$ the line tangent to $A\in X(\RR)$. Recall from the definition of the group law that the third intersection point of $\ell_{A_1, A_2}$ with $X(\RR)$ is equal to $\ominus A_1 \ominus A_2$, i.e. $\div \ell_{A_1, A_2} = A_1+A_2+(\ominus A_1 \ominus A_2)$.
Assume that $A_1\oplus A_2 \oplus A_3 = T_i$ for some $i\in \{ \, 1,2,3\, \}$ (recall that we have chosen indices in such a way $a_1<a_2<a_3$). A direct computation shows that:

\begin{equation}\label{eq:divq}
    \div q = \div\frac{\ell_{A_1, A_2}^2 \ell_{A_1 \oplus A_2, A_3}^2}{\ell_{\ominus A_1 \ominus A_2, A_1 \oplus A_2}^2 \ell_{T_i}^2}\ell_O \ell_{T_i}
\end{equation}

and since $X$ is irreducible, the two expressions agree (up to a positive constant) on $X(\RR)$. Therefore, $q$ is nonnegative if and only if $\ell_O \ell_{T_i}$ is nonnegative. For $i=2,3$ it is easy to show that $\ell_O \ell_{T_i}$ changes sign on $X(\RR)$, and thus $q$ is not nonnegative. On the other hand, for $i=1$ we have:
\begin{equation}\label{eq:divl0l1}
    (\ell_{T_1}^2+(a_2-a_1)(a_3-a_1)\ell_O^2) \, \ell_O \ell_{T_1} = x_2^2 \ell_O^2 + (a_2 + a_3 - 2 a_1)\ell_O^2 \ell_{T_1}^2
\end{equation}
which shows that $\ell_O \ell_{T_1}$, and thus $q$, is nonnegative on $X(\RR)$.

In conclusion, $A_1, A_2, A_3$ define a quadric $q$ such that $\div q = 2(A_1+A_2+A_3)$ which is nonnegative if and only if:
\begin{enumerate}
    \item either $A_1 \oplus A_2 \oplus A_3 = O$, and in this case $q = \ell^2$ is a square;
    \item or $A_1 \oplus A_2 \oplus A_3 = T$, where $T$ is the unique nontrivial real $2$-torsion point such that $\ell_O \ell_T$ is nonnegative on $X(\RR)$.
\end{enumerate}
These quadrics span extreme rays of $\pos = \pos_{X, 2}$ since they have the maximal number of real zeroes: there are therefore two families of extreme rays defined by the two positive $2$-torsion points $O$ and $T$. This is consistent with \Cref{lem:positive_torsion_genus}.

We now study other faces. In particular, recall from \Cref{thm:char_faces} that all the faces $\face \subset \pos$ are of the form $\face = \face_{A_1+\dots+A_k}$, with $A_1, \dots , A_k \in X(\RR)$.

If $k \ge 4$, then $\face_{A_1+\dots + A_k} = \{ \, 0 \, \}$ by B\'ezout's theorem. For $k=3$, we deduce from the above discussion that either $\dim \face_{A_1+A_2+A_3}=1$, when the $A_i$'s are in special position, otherwise $\face_{A_1+A_2+A_3} = \{ \, 0 \, \}$. 

For $k=2$, we can consider the two additional points $B_1 = O \ominus A_1 \ominus A_2$ and $B_2 = T \ominus A_1 \ominus A_2$. Then $\face_{A_1+A_2+B_1}$ and $\face_{A_1+A_2+B_2}$ are extreme rays, and $\face_{A_1+A_2} = \cone(\face_{A_1+A_2+B_1}, \face_{A_1+A_2+B_2})$ has dimension $2$. Similarly, for $k = 1$ we can show that 
$\dim \face_{A_1} = 4$.
In conclusion, we proved the following.

\begin{proposition} \label{prop:psd_plane_cubic}
    Let $X \subset \PP^2$ be a smooth totally real plane cubic, and denote $\pos = \pos_{X, 2}$ the convex cone of nonnegative quadratic forms. Let $T$ be the unique nontrivial $2$-torsion point of $(X, \oplus)$ such that $\ell_O \ell_T$ is nonnegative. Then all the proper faces of $\pos$ are the following:
    \begin{enumerate}
        \item $\face_{A}$ for $A\in X(\RR)$, and  we have $\dim \face_A = 4$;
        \item $\face_{A_1+A_2}$ for $A_1, A_2 \in X(\RR)$, and we have $\dim \face_{A_1+A_2} = 2$.
        \item $\face_{A_1+A_2+B_1}$ with $A_1, A_2 \in X(\RR)$ and $B_1 = O \ominus A_1 \ominus A_2$; in this case $\face_{A_1+A_2+B_1} = \RR_{\ge 0} \cdot \ell^2$ is an extreme ray of the cone of sums of squares and nonnegative quadrics.
        \item $\face_{A_1+A_2+B_2}$ with $A_1, A_2 \in X(\RR)$ and $B_2 = T \ominus A_1 \ominus A_2$; in this case $\face_{A_1+A_2+B_1} = \RR_{\ge 0} \cdot q$ is an extreme ray of the nonnegative quadrics and $q$ is not a sum of squares.
    \end{enumerate}
\end{proposition}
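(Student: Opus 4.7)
The plan is to synthesize \Cref{thm:char_faces} (every nonzero face equals $\face_D$ for some totally real effective divisor $D$) with the explicit analysis of nonnegative quadrics carried out above in this subsection. By Bezout, $\deg \div q = 2\cdot 3 = 6$ for any quadric on $X$, so any face divisor satisfies $\deg D \leq 3$; the case $\deg D = 0$ gives the improper face $\face_0 = \pos$, leaving $\deg D \in \{1,2,3\}$.

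The dimensions come from \Cref{lem:dimension}, which applies because a smooth plane cubic is projectively normal as a hypersurface in $\PP^2$. For any $q$ in the relative interior of $\face_D$ we have $\dim \face_D = \ell((\div q)_\CC)$, and since $\deg (\div q)_\CC = 6 - 2\deg D$, Riemann--Roch on an elliptic curve yields $\ell((\div q)_\CC) = 6 - 2\deg D$ whenever this number is positive. Hence $\deg D = 1, 2, 3$ produce faces of dimension $4, 2, 1$, matching cases (i)--(iv) of the statement.

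For $\deg D = 3$, the classification is the one already performed: Fulton's Problem~5.41 shows that $A_1 + A_2 + A_3$ is the real divisor of some quadric iff $A_1 \oplus A_2 \oplus A_3$ is a $2$-torsion point, and the equations \eqref{eq:divq}--\eqref{eq:divl0l1} show that such a quadric is nonnegative iff that $2$-torsion point is positive, namely $O$ (yielding a square $\ell^2$, case (iii)) or $T$ (yielding a non-square, case (iv)). For $\deg D = 2$, the face $\face_{A_1 + A_2}$ contains the two linearly independent extreme rays $\face_{A_1 + A_2 + B_1}$ and $\face_{A_1 + A_2 + B_2}$ (one a sum of squares, the other not), and since $\dim \face_{A_1 + A_2} = 2$ by the dimension count, the face equals their conic hull. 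For $\deg D = 1$, $\face_A$ is proper (e.g.\ $x_0^2 + x_1^2 + x_2^2 \in \pos \setminus \face_A$) and nonzero ($\ell_A^2 \in \face_A$), so by the Galois connection of \Cref{cor:galois} its face divisor is exactly $A$, and \Cref{lem:dimension} gives $\dim \face_A = 4$.

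The main technical point is verifying that each candidate $D$ genuinely lies in $\Im \Phi$, i.e.\ that $\face_D$ has face divisor equal to $D$ (and not strictly larger), so that the dimension formula actually computes $\dim \face_D$. This is ensured by exhibiting quadrics in $\relint \face_D$: for $\deg D = 3$ directly from \eqref{eq:divq}--\eqref{eq:divl0l1}; for $\deg D \leq 2$ by taking convex combinations of extreme rays $\face_{A_1 + A_2 + B_i}$ whose divisors dominate $D$ at different residual points, so that the sum vanishes on $X(\RR)$ precisely on the support of $D$.
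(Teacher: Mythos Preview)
Your proof is correct and follows the same approach as the paper (which is simply the explicit discussion preceding the proposition): B\'ezout bounds $\deg D\le 3$, the equations \eqref{eq:divq}--\eqref{eq:divl0l1} handle the extreme rays at $\deg D=3$, and convex combinations of those extreme rays show $D\in\Im\Phi$ for $\deg D\le 2$. One small correction: the Galois connection alone does not force $\div(\face_A)=A$ from $\face_A$ being proper and nonzero---that requires $A\in\Im\Phi$, which is exactly what your final paragraph (and the paper) supplies via the convex-combination argument, so the appeal to \Cref{cor:galois} in the $\deg D=1$ paragraph is superfluous. The only stylistic difference is that you invoke \Cref{lem:dimension} and Riemann--Roch for the dimension counts, whereas the paper keeps this section deliberately elementary and reads off dimensions by enumerating extreme rays (and simply writes ``similarly'' for $\dim\face_A=4$); your route is arguably cleaner there.
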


In particular, notice that all the extreme rays of $\pos$ arise from quadrics with the maximal possible number of real zeroes, and all higher dimensional faces have the expected dimension, equal to $6 = \dim \RR[\vb x]_2$ minus two times the number of common zeroes of the quadrics in the face.
Similar elementary considerations can be applied for higher degree nonnegative forms.

Let us also remark that \Cref{eq:divq,eq:divl0l1} can be combined to give explicit certificates of nonnegativity for all the extreme rays of $\pos$. Indeed, if $\RR_{\ge 0} \cdot q$ is an extreme ray of $\pos$, then \Cref{eq:divq,eq:divl0l1} imply that there exists $\alpha \in \RR_{>0}$ such that:
\[
    q = \alpha \, \frac{\ell_{A_1, A_2}^2 \ell_{A_1 \oplus A_2, A_3}^2 \left( x_2^2 \ell_O^2 + (a_2 + a_3 - 2 a_1)\ell_O^2 \ell_{T_1}^2 \right)}{\ell_{\ominus A_1 \ominus A_2, A_1 \oplus A_2}^2 \ell_{T_i}^2 \left( \ell_{T_1}^2+(a_2-a_1)(a_3-a_1)\ell_O^2 \right)} \text{ on } X(\RR)
\]
where $X$ is a totally real plane cubic given by the equation in Weierstrass form \[x_2^{2}x_0-\left(x_1-a_{1}x_0\right)\left(x_1-a_{2}x_0\right)\left(x_1-a_{3}x_0\right)=0\]
The above Artin-type certificates for extreme rays, can be combined in convex combinations to provide explicit certificates of nonnegativity for all nonnegative quadrics $q\in \pos$. We also remark that such certificates do not have the smallest possible degree: indeed, in \cite[Th.~1.1]{blekhermanSharpDegreeBounds2019} is shown the existence of certificates with a denominator of degree two and numerator of degree four. On the other hand, the certificate given in \Cref{eq:divl0l1} is of minimal possible degree.

\section{Proofs of technical lemmas}
\label{app:B}
In this appendix, we prove the technical lemmas in \Cref{sec:waring}.

\begin{proof}[{Proof of \Cref{lem:chiantini}}]
    The statement is equivalent to the fact that the general point of the $d(n+1)$-secant variety of $Z$ is contained in two distinct $d(n+1)$-secant spaces. 
    We can then conclude as in the proof of \cite[Prop.~5.2]{chiantiniConceptKsecantOrder2006}, applied to the elliptic normal curve $Z$.
\end{proof}
\begin{proof}[{Proof of \Cref{lem:chiantini_real}}]
    Let $L = \sum_{i=1}^{d(n+1)} a_i \eval_{A_i} \in \cone_{d(n+1)}(Z) \setminus \partial\cone_{d(n+1)} (Z)$. Then by \Cref{lem:chiantini} $L$ admits another, unique, distinct representation $L = \sum_{i=1}^{d(n+1)} b_i \eval_{B_i}$, with $b_i \in \CC$ and $B_i\in X$. Our goal is to show that $b_i \in \RR_{\ge 0}$ and $B_i \in X(\RR)$.

    In the following, we denote $\sigma(\, \cdot \,)$ the action of complex conjugation. 
    Assume that $\{ \, B_1, \dots , B_{d(n+1)} \, \} \neq \{ \, \sigma({B}_1), \dots , \sigma({B}_{d(n+1)}) \, \}$. Then:
    \[
        L = \sum_{i=1}^{d(n+1)} a_i \, \eval_{A_i} = \sum_{i=1}^{d(n+1)} b_i \, \eval_{B_i} = \sum_{i=1}^{d(n+1)} \sigma({b}_i) \ \eval_{\sigma({B}_i)}
    \]
    are three distinct decompositions, contradicting \Cref{lem:chiantini}.
    
    Therefore we have two cases: either $B_i \in X(\RR)$, or $\sigma({B_i}) = B_j$ for some $j=j(i)$, and in such a case we also have $b_i = \sigma({b_j})$.
    We now show that the second possibility leads to a contradiction. Pick a real linear form $\ell$ vanishing at all the points $\{ \,B_1, \dots , B_{d(n+1)} \, \}$ except $B_i$ and $\sigma({B}_{i})$. This imposes $d(n+1)-2$ conditions in the $d(n+1)$-dimensional space of linear forms on $\nu_{n,d}(X(\RR))$, and thus we can furthermore assume that $\ell(B_i)^2 = -(b_i + \sigma(b_i)) = \sigma (\ell(\sigma (B_i))^2)$. Therefore:
    \begin{align*}
        L(\ell^2) & = \sum_j a_j \ell(A_j)^2 \ge 0 \\
        L(\ell^2) & = b_i \ell(B_i)^2 + \sigma({b}_i) \ell(\sigma({B}_i))^2 = - (b_i + \sigma(b_i))^2 <0
    \end{align*}
    which is a contradiction.

    Therefore $B_i \in X(\RR)$ for all $i$. We now want to show that $b_i \in \RR_{\ge 0}$. For this, pick a real linear form $\ell$ vanishing at all the points $\{ \,B_1, \dots , B_{d(n+1)} \, \}$ except $B_i$. Therefore:
    \[
        0 \le \sum_{j=1}^{d(n+1)} a_j \ell(A_j)^2 = L(\ell^2) = \sum_{j=1}^{d(n+1)} b_j \ell(B_j)^2 = b_i \ell(B_i)^2
    \]
    showing that $b_i \in \RR_{\ge 0}$ and concluding the proof. 
\end{proof}
\begin{proof}[{Proof of \Cref{lem:chiantini_real_special}}]
    Notice that if $L \in \partial \cone_{d(n+1)}(Z)$ then $L$ is a critical value of the map $\psi$ (extended to the full real locus $X(\RR)$) in the proof of \Cref{prop:car_connected}. More precisely, since $L$ is in the boundary, for every $\vb A \in \widehat X(\RR)^{d(n+1)}$ in the preimage of $L$, the differential $\dd \psi_{\vb A}$ is not surjective. Furthermore, since $L \notin \cone_{d(n+1)-1}(Z)$, then every representation of $L$ uses $d(n+1)$ atoms which add to a $2$-torsion point (see again the proof of \Cref{prop:car_connected}).
    
    Consider then, for every $\alpha \in \jac(\RR)_2$, the set of $d(n+1)$-uples which add to $\alpha$:
    \[
        P_\alpha \coloneqq \{ \, \vb A = (A_1, \dots , A_{d(n+1)}) \in \widehat X(\RR)^{d(n+1)} \mid A_1 \oplus \dots \oplus A_{d(n+1)} = \alpha \, \}
    \]
    and the restriction $\psi_\alpha$ of $\psi$ to this subset:
    \begin{align*}
        \psi_\alpha \colon P_\alpha & \longrightarrow \cI(X)_{2d}^\perp \cong R_{2d}^* \\
        \vb A = (A_1, \dots ,  A_{d(n+1)}) & \longmapsto
            \sum_{i=1}^{d(n+1)} \ell_{A_i}^2
    \end{align*}
    As in the proof of \Cref{prop:car_connected}, analyzing the differential of $\psi_\alpha$ we can show that, if the $A_i$'s are distinct, then the codimension of the image of $\dd \psi_{\vb A}$ is one. A dimension count shows then that, if $L \in \partial \cone_{d(n+1)}(Z) \setminus \cone_{d(n+1)-1}(Z)$, the fiber $\psi_\alpha^{-1}(L)$ is finite (or empty). Since every representation of $L \in \partial \cone_{d(n+1)}(Z) \setminus \cone_{d(n+1)-1}(Z)$ uses distinct atoms which add to a $2$-torsion point, we have shown that $L$ admits finitely many representations.

    Finally, by semicontinuity (see e.g. \cite[Theorem~III.12.8]{hartshorneAlgebraicGeometry1977}) we can conclude that $L$ admits at most two decompositions.
\end{proof}

\begin{proof}[{Proof of \Cref{lem:fiber}}]
    Considering the Veronese reembedding $\nu_{n,d}(X)$, we can restict to the case of quadrics, i.e. $d=1$. As is in the proof of \Cref{prop:car_connected}, consider the map 
        \begin{align*}
            \psi \colon \widehat X(\RR)^{n+2} & \longrightarrow \cI(X)_{2}^\perp \cong R_{2}^* \\
            \vb A = (A_1, \dots ,  A_{n+2}) & \longmapsto \sum_{i=1}^{n+2} \ell_{A_i}^2
        \end{align*}  
    and its differential
    \begin{align*}
        \dd \psi_{\vb A} \colon \mathrm{T}_{\vb A} (\widehat X(\RR)^{n+2}) \cong \mathrm{T}_{A_1} \widehat X(\RR) \times \dots \times \mathrm{T}_{A_{n+2}} \widehat X(\RR)  & \longrightarrow \mathrm{T}_{\psi(\vb A)}(\cI(X)_2^\perp) \cong \cI(X)_2^\perp\\
        \vb v = (v_1, \dots v_{n+2}) & \longmapsto 2\sum_{i=1}^{n+2} \ell_{v_i} \ell_{A_i}
     \end{align*}
    From \Cref{cor:cara}, we deduce that $\Im \psi = \pos_{X, 2}^\vee$, which is a $2(n+1)$-dimensional convex cone. We want to show that, for $L \in \interior \pos_{X, 2}^\vee$, $\dim \psi^{-1}(L)$ has dimension two.
    
    As in the proof of \Cref{prop:car_connected}, we notice that $\dd \psi_{\vb A}$ is not surjective if and only if there exists a double vanishing quadratic form $q \in R_{2}$ at $\vb A = (A_1, \dots ,A_{n+2})$. 
    As $\deg \div q = \deg q \cdot \deg X = 2 (n+1)$, this can happen if and only if the $A_i$'s are not distinct and they admit a double vanishing quadraitc form through them. Therefore, if the $A_i$'s are distinct then $\vb A$ is a regular point of $\psi$, and \[\dim \psi^{-1}(\psi(\vb A)) = \dim \widehat X(\RR)^{n+2} - \dim R_2^* = 2\]

    It is then sufficient to show that every linear functional in the relative interior of $\pos_{X, 2}^\vee$ is the image of a regular point of $\psi$. We proceed as in the proof of \Cref{lem:car_ineq}: let $L \in \interior \pos_{X, 2}^\vee$. For all $A\in \widehat{X}(\RR) \cap \SS^{n}$, there exists $\lambda = \lambda(A)\in \RR_{>0}$ such that 
    \[
        L - \lambda^2 \eval_{A} = L - \lambda^2 \ell_A^2 = q- \ell_{\lambda A}^2 \in \partial \pos_{X, 2}^\vee
    \]
    and $L- \ell_{\lambda A}^2$ can be represented in a unique way as a sum of $n+1$ point evaluations (see the proof of \Cref{lem:car_ineq}). Therefore, if the $n+1$ points are distinct and different from $A$, then $L$ is the image of a regular point of $\psi$, as desired. If the $n+1$ points and $A$ are not distinct for all $A \in X(\RR)$, then $L$ would admit a representation as a sum of $n+1$ point evaluations using every $A \in X(\RR)$, contradicting \Cref{lem:chiantini_real,lem:chiantini_real_special}. Thus $L$ has a representation using $n+2$ distinct point evaluations, and it is the image of a regular point of $\psi$, as desired.    
\end{proof}
We note that the proof of \Cref{lem:fiber} can be simplified if $X(\RR)$ is connected. Indeed, in this case all the linear functionals in the relative interior of $\pos_{X,2}^\vee$ are regular values of $\psi$, and we can immediately conclude that all the fibers have dimension two. On the other hand, if $X(\RR)$ is disconnected then $\psi$ has critical values in the interior of $\pos_{X,2}^\vee$, and the proof cannot be easily simplified.
\begin{proof}[{Proof of \Cref{lem:excluding_points}}]
As in the proof of \Cref{lem:fiber}, we can restrict to the case $d=1$.
    Let $\psi$ be the map in the proof of \Cref{lem:fiber}.
    Assume by contradiction that every representation of $L \in \interior \pos_{X, 2}^\vee \subset R_{2}^*$ uses $\eval_{B_j}$ for some $B_j \in \mathcal B$. Then, if we let $\mathcal A_{B_j}(L) \subset (\widehat X(\RR) \cap \SS^n)^{n+1}$ be the $n+1$-tuples of representing atoms of $L$ which include $B_j$, we have:
    \[
        \psi^{-1}(L) =\mathcal A_{B_1}(L) \cup \dots \cup \mathcal A_{B_m}(L) 
    \] 
    We deduce from \Cref{lem:fiber} that there exists $j$ such that $\dim A_{B_j}(L) = 2$.
    In particular, we have a two-dimensional family of representations of the form:
    \[
        L = \lambda \eval_{B_j} + \sum_{i=1}^{n+1} \gamma_{i}(\lambda) \, \eval_{A_i(\lambda)}
    \]
    where $\gamma_i(\lambda) \in \RR_{\ge 0}$, $A_i(\lambda) \in \widehat X(\RR) \cap \SS^n$ and $\lambda\in \RR_{>0}$. This implies that for some $\lambda\in \RR$ there exists a one-dimensional family of representations for $L - \lambda \eval_{B_j}$ using ${n+1}$ point evaluations, in contradiction to \Cref{lem:chiantini_real}. This concludes the proof.
\end{proof}

\hypersetup{
     citecolor=black
}
{
\printbibliography
}
\end{document}